\def\mytitle{Classical and strong convexity of sublevel sets and application to attainable sets of nonlinear systems}
\def\myshorttitle{Convexity of sublevel sets and application}
\def\confidentialstring{%
This is the accepted version of a paper to be published in SIAM J. Control Optim., 2014.}
\let\headnote=\relax
\def\mykeywords{Sublevel set, convexity, strong convexity, local quadratic support, 
attainable set%
}
\def\citeTheorem{Th.}
\def\citeCorollary{Cor.}
\def\citeProposition{Prop.}
\def\citeChapter{Ch.}
\def\citeEquation{Eq.}
\def\citeDefinition{Def.}
\def\citeSection{Sec.}
\def\citeLemma{Lemma}
\def\myvspace{\vspace{0pt}}
\let\proof\@undefined
\let\endproof\@undefined
\newcommand{\openbox}{
\vbox{\hrule height0.6pt\hbox{%
   \vrule height1.3ex width0.6pt\hskip0.8ex
   \vrule width0.6pt}\hrule height0.6pt
  }
}
\providecommand{\qedsymbol}{\openbox}%
\newenvironment{proof}[1][Proof]{
 \normalfont \topsep6\p@\@plus6\p@\relax
  \trivlist
 \item[
\hskip\labelsep
       \itshape
   \hspace{15pt}#1\@addpunct{.}]\ignorespaces
}{\qquad \qedsymbol
}
\newenvironment{example}{
    \refstepcounter{theorem}%
  \par\textit{Example \arabic{section}.\arabic{theorem}\@addpunct{.}\ignorespaces }%
}{\par}%
\newcounter{hypocnt}
\newenvironment{hypothesis}[1]{
    \refstepcounter{hypocnt}%
\vspace{5pt}
~\\\ensuremath{(H_\arabic{hypocnt})}\ignorespaces \quad #1
}{\vspace{5pt}\par}
\newcommand{\reftohypothesis}[1]{\ensuremath{(H_{\textnormal{#1}})}}
\newcommand{\textfrac}[2]{{#1}/{#2}}
\newcommand{\R}{\mathbb{R}}
\newcommand{\sequence}[3]{\ifthenelse{\equal{#2}{~}}%
                                     {({#1}_k)_{k\in \mathbb{N}}}%
                                     {({#1}_k)_{k\in \mathbb{N}} \text{ in } {#2}}}%
\title{\MakeUppercase{\mytitle}\footnotemark[1]~\footnotemark[3]
}
\author{Alexander Weber\footnotemark[2]\and Gunther Reissig\footnotemark[2]}
\def\href#1#2{\texttt{#2}}
\begin{document}
\makeatletter
\let\figurename\@undefined
\newcommand\figurename{Fig.}
\makeatother
\maketitle
\renewcommand{\thefootnote}{\fnsymbol{footnote}}
\footnotetext[1]{This is the accepted version of a paper to be published in \textit{SIAM J. Control Optim.}, 2014.}
\footnotetext[3]{This work has been supported by the German Research Foundation (DFG) under grant no. RE 1249/3-1.%
}

\footnotetext[2]{University of the Federal Armed Forces Munich,
Department of Aerospace Eng., Institute of Control Eng. (LRT-15), D-85577 Neubiberg
(Munich), Germany, \href{mailto:A.Weber@unibw.de}{\textcolor{black}{A.Weber@unibw.de}},\linebreak \url{http://www.reiszig.de/gunther/}}
\renewcommand{\thefootnote}{\arabic{footnote}}
\begin{abstract}
Necessary and sufficient conditions for convexity and strong convexity, respectively, of connected sublevel sets that are defined by finitely many real-valued $C^{1,1}$-maps are presented. A novel characterization of strongly convex sets in terms of the so-called local quadratic support is proved. The results concerning strong convexity are used to derive sufficient conditions for attainable sets of 
continuous-time nonlinear systems to be strongly convex. An application of these conditions is a novel method to over-approximate attainable sets when strong convexity is present. 
\end{abstract}
\begin{keywords} 
\mykeywords
\end{keywords}
\begin{AMS}
Primary, 52A30; \ Secondary, 52A20, 93C10, 93C15 
\end{AMS}
\pagestyle{myheadings}
\thispagestyle{plain}
\markboth{\uppercase{Alexander Weber and Gunther Reissig}}{\MakeUppercase{\myshorttitle}}
\section{Introduction}
\label{s:introduction}
In this paper we investigate necessary and sufficient conditions for connected sublevel sets of the form
\begin{equation}
\label{e:sublevelset}
\{ x \in U \, | \, g_1(x) \leq 0,\ldots, g_m(x) \leq 0\}
\end{equation}
to be convex and strongly convex, respectively. The ingredients of \ref{e:sublevelset} are an open subset $U$ of the $n$-dimensional real space, a positive integer $m$, and real $C^{1,1}$-functions $g_1,\ldots,g_m$ (continuous with Lipschitz continuous derivative) with domain $U$. We focus on conditions that are given explicitly in terms of properties of $g_1,\ldots,g_m$. We will review the concept of strong convexity later in the introduction and proceed with introductory remarks on ordinary convexity of sublevel sets of the form \ref{e:sublevelset}.

We give two motivations for investigating convexity of sublevel sets. The first arises from convex optimization as follows. In optimization problems with inequality constraints the feasible set usually takes the form \ref{e:sublevelset}, and such a problem may be considered having a linear objective by straightforward transformation. Assuming convexity of the feasible set has therefore the nice, widely known consequence that every local solution is a global one. Moreover, simple algorithms for constrained optimization successfully find global solutions. The second motivation arises from results on so-called semidefinite representability of sets of the form \ref{e:sublevelset}, e.g. \cite{Lasserre08,Lasserre09,HeltonNie09,HeltonNie10}. These results assume convexity of \ref{e:sublevelset} but do not provide conditions to verify this hypothesis. Thus, the problem of verifying convexity of \ref{e:sublevelset} is of practical relevance. Next, we discuss the known conditions for convexity of \ref{e:sublevelset} that depend explicitly on properties of $g_1,\ldots,g_m$. 

A well-known sufficient condition for the set \ref{e:sublevelset} to be convex is that all functions $g_1,\ldots,g_m$ are convex functions but this condition is far from being necessary. 
In the case where $g_1,\ldots,g_m$ are polynomials, criteria for convexity of \ref{e:sublevelset} have been presented in \cite{Lasserre08,Lasserre10,HenrionLouembet12} with the result in \cite{HenrionLouembet12} being incorrect as Examples \ref{ex:thm:convex:1} and \ref{ex:thm:convex:2} of the present paper reveal. In \cite{Lasserre08,Lasserre10} the criteria are given in terms of so-called certificates, i.e., the existence of polynomials that satisfy a certain relation implies convexity of \ref{e:sublevelset} and conversely. The verification of this condition requires solving semidefinite programs of large size, in general. So, these certificates are suitable to verify convexity only numerically. A criterion for the non-polynomial case is \cite[\citeLemma~4.3]{Lasserre08} where differentiability of the functions involved is required (and some other convenient assumptions). This characterization is based on a \textit{global} condition of \textit{first} order in the following sense: For any fixed boundary point of \ref{e:sublevelset} one has to verify a relation between the boundary point and every other point of \ref{e:sublevelset}, and in this relation the first derivatives $g_1',\ldots,g_m'$ of $g_1, \ldots, g_m$ are involved. 
In contrast, a criterion given in \cite{i07Convex} for the special case $m = 1$ is a purely \textit{local} condition of \textit{second} order: For any fixed boundary point of \ref{e:sublevelset} one needs to verify properties on first and (generalized) second-order derivatives at that point. 

We intend to extend the result of \cite{i07Convex} to the case $m>1$. On the one hand, our novel results consist of second-order conditions as described above, and therefore require slightly more smoothness (Lipschitz continuity of $g_1',\ldots,g_m'$) than \cite[\citeLemma~4.3]{Lasserre08}. On the other hand, our conditions are purely local, so they are easier to be verified in practice.

Geometrical properties of sets are of significant importance in control theory, e.g. \cite{Plis74,FattoriniFrankowska90,FattoriniFrankowska90b,CannarsaFrankowska06,i07Convex}. An example is the concept of strong convexity, introduced in \cite{Mayer35} and later renamed \cite{LevitinPolyak66}, which we formally define in the subsequent paragraph. The concept is analogous to and implies ordinary convexity, while the converse is not true if the dimension of the space exceeds $1$.
Several results in both optimization and control theory, e.g. \cite{Veliov89,JourneeNesterovRichtarikSepulchre08,i11abs}, rely on the hypothesis that certain sets are strongly convex rather than merely convex. Hence, detecting strong convexity is also of interest. Our particular motivation to investigate strong convexity of sublevel sets is to increase the efficiency of a method presented in \cite{i11abs} for over-approximating attainable sets of nonlinear dynamical systems which is an essential issue in the so-called \begriff{abstraction based controller design} \cite{GrueneJunge07,i11abs,ReissigRungger13,Tabuada09,RunggerMazoTabuada13}. The efficiency of this approach depends, among others, on the quality of the over-approximation method used. 
To give more details on how to use strong convexity and sublevel sets, and on how to improve the over-approximation of attainable sets, we briefly discuss the method in \cite{i11abs} after having introduced strong convexity more formally.\looseness=-1

For the $n$-dimensional real space $\R^n$ endowed with the Euclidean inner product $\innerProd{\cdot}{\cdot}$ and corresponding norm $\|\cdot \|$, strong convexity is defined as below. Here and throughout, $\cBall(c,r)$ denotes the closed ball
in $\R^n$ of radius $r$ centered at $c\in \R^n$, where the convention $\cBall(c,0) = \{c\}$ is adopted.
\begin{definition}
\label{def:StrongConvexity}
Let $r>0$. A set $\Omega \subseteq \R^n$ is called \begriff{$r$-convex} if 
\begin{equation}
\bigcap_{x,y \in \cBall(c,r)} \cBall(c,r) \subseteq \Omega
\label{linse}
\end{equation}
for all $x,y \in \Omega$. The intersection in \ref{linse} is taken over all balls $\cBall(c,r)$
containing $x$ and $y$ and is considered equal to $\R^n$ if no such
ball exists. $\Omega$ is called \begriff{strongly convex} if $\Omega$ is
$s$-convex for some $s>0$. 
\end{definition}

Definition \ref{def:StrongConvexity} makes the connection to ordinary convexity obvious: The role of line segments is now assumed by so-called \begriff{lenses} \cite{Blanc43} by which we mean the set on the left hand side of \ref{linse}. Moreover, the role of supporting half-spaces is replaced by supporting \textit{balls} as the following proposition shows \cite{FrankowskaOlech81,i12sConvex}. 
\begin{proposition}
\newcommand{\unitnormal}{v}
\label{p:globalchar}
Let $r > 0$ and $\Omega \subseteq \R^n$ be closed. Denote by $\boundary \Omega$ the boundary of $\Omega$. Then the following conditions are equivalent: 
\begin{asparaenum}[(i)]
\item
\label{p:globalchar:i}
{$\Omega$ is $r$-convex.}
\item
\label{p:globalchar:ii}
{$\Omega$ is convex and $\Omega \subseteq \cBall(x - r \unitnormal,r)$
for all $x \in \partial \Omega$ and all $\unitnormal\in \R^n$ such that $\| \unitnormal\| = 1$ and $\innerProd{\unitnormal}{y-x}\leq 0$ for all $y \in \Omega$.}
\end{asparaenum}
\end{proposition}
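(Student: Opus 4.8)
The plan is to prove the two implications separately, using throughout that every lens is \emph{convex}, being an intersection of the convex balls $\cBall(c,r)$ that contain its two generating points. Write $L(x,y)$ for the lens of $x,y\in\Omega$, i.e.\ the set on the left-hand side of \ref{linse}; note $x,y\in L(x,y)$ and that, since each ball in the intersection contains the segment $[x,y]$, also $[x,y]\subseteq L(x,y)$. For (i)$\Rightarrow$(ii) this already gives convexity of $\Omega$: by $r$-convexity $[x,y]\subseteq L(x,y)\subseteq\Omega$ for all $x,y\in\Omega$ (if no admissible ball exists then $L(x,y)=\R^n$ forces $\Omega=\R^n$, which is trivially convex). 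For the supporting-ball part I would fix $x\in\partial\Omega$ and a unit vector $v$ with $\innerProd{v}{y-x}\le0$ for all $y\in\Omega$, so that $\Omega$ lies in the half-space $H^-=\{w\mid\innerProd{v}{w-x}\le0\}$ whose bounding hyperplane passes through $x$. For any $y\in\Omega$, $r$-convexity yields $L(x,y)\subseteq\Omega\subseteq H^-$, and the crux is the claim that this forces $y\in\cBall(x-rv,r)$; granting it for every $y$ gives $\Omega\subseteq\cBall(x-rv,r)$, which is (ii).

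I would establish the key claim as follows. The case $\|x-y\|>2r$ cannot occur, since then $L(x,y)=\R^n\not\subseteq H^-$, and $y=x$ is trivial; otherwise $L(x,y)=\cBall(c_+,r)\cap\cBall(c_-,r)$, where $\cBall(c_\pm,r)$ are the two radius-$r$ balls whose boundary spheres pass through both $x$ and $y$, with outward unit normals $n_\pm=(x-c_\pm)/r$ at the common point $x$. Because the convex lens is contained in $H^-$ while $x$ lies on both the lens and the hyperplane, $x$ maximizes $w\mapsto\innerProd{v}{w}$ over the lens, so $v$ lies in the normal cone of the lens at $x$; as both ball-constraints are active there, this cone is $\{\alpha n_++\beta n_-\mid\alpha,\beta\ge0\}$ (the degenerate case $\|x-y\|=2r$, where the two balls coincide, is checked directly). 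Writing $v=\alpha n_++\beta n_-$ and expanding $\|y-c_\pm\|=r$ gives $\innerProd{n_\pm}{y-x}=-\|y-x\|^2/(2r)$, hence $\innerProd{v}{y-x}=-(\alpha+\beta)\,\|y-x\|^2/(2r)$. The decisive point is that $1=\|v\|=\|\alpha n_++\beta n_-\|\le\alpha+\beta$ by the triangle inequality, so $\innerProd{v}{y-x}\le-\|y-x\|^2/(2r)$; substituting into $\|y-(x-rv)\|^2=\|y-x\|^2+2r\innerProd{v}{y-x}+r^2$ yields $\|y-(x-rv)\|\le r$, i.e.\ $y\in\cBall(x-rv,r)$.

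For the converse (ii)$\Rightarrow$(i) I would argue by contradiction via nearest-point projection. Assume $\Omega$ is convex and satisfies the supporting-ball condition but is not $r$-convex; then there are $x,y\in\Omega$ and $z\in L(x,y)\setminus\Omega$ (the cases $\Omega\in\{\emptyset,\R^n\}$ being trivial). Let $p$ be the unique nearest point of the closed convex set $\Omega$ to $z$. Then $p\in\partial\Omega$, $z\neq p$, and the projection inequality $\innerProd{z-p}{w-p}\le0$ for all $w\in\Omega$ shows that $v:=(z-p)/\|z-p\|$ is a unit vector with $\innerProd{v}{w-p}\le0$ for all $w\in\Omega$, i.e.\ an admissible outward normal at $p$. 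By (ii), $\Omega\subseteq\cBall(p-rv,r)=:B$; in particular $x,y\in B$, so $B$ is a radius-$r$ ball containing both generating points and therefore occurs in the intersection defining $L(x,y)$, giving $z\in L(x,y)\subseteq B$. But $z=p+\|z-p\|\,v$ yields $\|z-(p-rv)\|=\|z-p\|+r>r$, so $z\notin B$ — a contradiction. Hence $\Omega$ is $r$-convex.

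The main obstacle is the key claim inside (i)$\Rightarrow$(ii): converting the \emph{soft} information that the lens lies in the supporting half-space into the \emph{metric} conclusion $y\in\cBall(x-rv,r)$. The resolution hinges on representing $v$ through the normal cone at the lens vertex $x$, which reduces the required ball inequality to the elementary estimate $\alpha+\beta\ge1$ — immediate from $\|v\|=1$ and the triangle inequality. The remaining care concerns the degenerate configurations $\|x-y\|\in\{0,2r\}$ and the empty- or full-boundary cases $\Omega\in\{\emptyset,\R^n\}$, all of which are routine to dispatch.
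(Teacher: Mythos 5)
First, a remark on the comparison you were asked for: the paper does not prove Proposition \ref{p:globalchar} at all --- it is quoted from \cite{FrankowskaOlech81,i12sConvex} --- so your argument has to stand on its own merits. Much of it does: the implication (ii)$\Rightarrow$(i), via the nearest-point projection $p$ of a point $z\in L(x,y)\setminus\Omega$, the unit normal $v=(z-p)/\|z-p\|$, and the contradiction between $z\in L(x,y)\subseteq\cBall(p-rv,r)$ and $\|z-(p-rv)\|=\|z-p\|+r>r$, is complete and correct; so is the derivation of ordinary convexity of $\Omega$ at the start of (i)$\Rightarrow$(ii).

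The supporting-ball half of (i)$\Rightarrow$(ii), however, has a genuine gap. Your key structural claim --- that $L(x,y)=\cBall(c_+,r)\cap\cBall(c_-,r)$ for ``the two'' radius-$r$ balls whose boundary spheres pass through $x$ and $y$ --- is false in $\R^n$ for $n\geq 3$: the admissible centers $\{c \mid \|c-x\|=\|c-y\|=r\}$ form an $(n-2)$-sphere, not a two-point set, and the intersection of only two of the corresponding balls is strictly larger than the lens. Concretely, with $r=1$, $x=(-1/\sqrt{2},0,0)$, $y=(1/\sqrt{2},0,0)$, $c_\pm=(0,\pm 1/\sqrt{2},0)$, the point $z=(0,0,1/\sqrt{2})$ lies in $\cBall(c_+,1)\cap\cBall(c_-,1)$ but outside the admissible ball centered at $c'=(0,0,-1/\sqrt{2})$, hence outside $L(x,y)$. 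This breaks the decisive step: a unit normal $v$ to $\Omega$ at $x$ (which is automatically normal to $L(x,y)\subseteq\Omega$ at $x$) need not have the form $\alpha n_+ + \beta n_-$. Indeed, take $\Omega=L(x,y)$ itself in the example above, which is closed and $r$-convex, and $v=x-c'=(-1/\sqrt{2},0,1/\sqrt{2})$: this is a unit normal to $\Omega$ at $x$ with nonzero third component, while every $\alpha n_+ + \beta n_-$ lies in the plane spanned by the first two coordinates. Your subsequent algebra does generalize --- every admissible normal $n_c=(x-c)/r$ satisfies $\innerProd{n_c}{y-x}=-\|y-x\|^2/(2r)$, so writing $v$ as a nonnegative combination of finitely many $n_{c_i}$ and using $\sum_i\alpha_i\geq\|v\|=1$ gives the conclusion --- but this repair rests on two facts you never establish, and which carry the real weight of this implication: that the lens equals the intersection of the admissible boundary balls alone (true, but nontrivial; you assert it without proof even in the planar case), and that the normal cone at $x$ of this infinite intersection is the closed convex cone generated by the $n_c$. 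As written, the proof also overlooks $n=1$, where no admissible boundary balls exist when $0<\|x-y\|<2r$, though that case is trivial to dispatch.
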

\myvspace
We continue with the overview of the over-approximation method presented in \cite{i11abs}. For simplicity, we consider an autonomous plant whose dynamics are given by 
\begin{equation}
\label{e:ode}
\dot x = F(x)
\end{equation}
with right hand side $F\colon \R^n \to \R^n$ of class $C^{1,1}$. Denote by $\varphi\colon \R_+ \times \R^n \to \R^n$ the flow of \ref{e:ode}: $\varphi(0,x_0)=x_0$ for $x_0 \in \R^n$, and $\varphi'(t,x_0)= F(\varphi(t,x_0))$ for all non-negative $t$ where the derivative is taken with respect to $t$. In \cite{i11abs}, sufficient conditions for the attainable set $\varphi(t,\Omega_0)$ to be convex are given in terms of $\Omega_0 \subseteq \R^n$ and $t>0$. Hence, an over-approximation with a finite number of supporting half-spaces of $\varphi(t,\Omega_0)$ is ensured by the well-known support property of convex sets. The required pairs of point and normal vector defining the half-spaces are obtained from the adjoint equation to \ref{e:ode}, e.g. \cite{i11abs}. We remark that if $\Omega_0$ is of the form \ref{e:sublevelset} then so is $\varphi(t,\Omega_0)$. This fact and our results on strong convexity of sublevel sets for $m = 1$ will imply sufficient conditions in terms of $\Omega_0$ and $t$ for $\varphi(t,\Omega_0)$ to be \textit{strongly} convex.  
Then we will be in a position to use property \ref{p:globalchar:ii} of Proposition \ref{p:globalchar} to approximate $\varphi(t,\Omega_0)$ by supporting balls more accurately than by the same number of supporting half-spaces. We emphasize that no more data than for the corresponding half-spaces will be needed, namely the same pairs of point and normal vector defining the half-spaces. See \ref{fig:SuppBalls}. (The radius of the supporting balls can be obtained from properties of the right hand side $F$ or the flow of \ref{e:ode}; see Section \ref{s:application}.)\looseness=-1

\begin{figure}
\newcommand{\unitnormal}{v}
\begin{center}
  \psfrag{O}[t][t]{$\Omega$}
\psfrag{H}[][]{$\widehat\Omega$}
\psfrag{1}[][]{\small$x_1$}
\psfrag{2}[][]{\small$x_2$}
\psfrag{3}[][]{\small$x_3$}
\psfrag{4}[][]{\small$x_4$}
\psfrag{5}[][]{\small$\unitnormal_1$}
\psfrag{6}[][]{\small$\unitnormal_2$}
\psfrag{7}[][]{\small$\unitnormal_3$}
\psfrag{8}[][]{\small$\unitnormal_4$}
  \includegraphics[width=.5\linewidth]{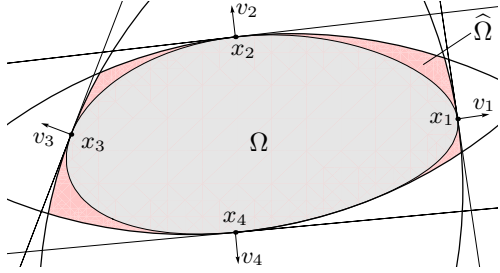}
\end{center}
\caption{\label{fig:SuppBalls}%
Over-approximation of the $r$-convex set $\Omega$ by the
intersection $\widehat \Omega$ of four supporting balls $\cBall(x_i-r\unitnormal_i,r)$, $i=1,\ldots,4$ \textnormal{\cite{i13qsuppc}}. The intersection of the four
supporting half-spaces that are determined by the same normal vectors $\unitnormal_i$ is a
less accurate approximation of $\Omega$.
}
\vspace*{-\baselineskip}%
\end{figure}

In analogy to ordinary convexity, the necessary and sufficient conditions for strong convexity of sublevel sets, which we will present, are purely local and of second order. Our results will recover as a special case a sufficient condition given in \cite{JourneeNesterovRichtarikSepulchre08} for $m = 1$. The result in \cite{JourneeNesterovRichtarikSepulchre08} is, to the best of our knowledge, the only previously known condition that depends explicitly on properties of $g_1,\ldots,g_m$.

The rest of the paper is organized as follows. In Section \ref{s:preliminaries} we set up notation and terminology. The main theorems on convexity and strong convexity, respectively, of sublevel sets are stated and proved in Section \ref{s:mainresultssublevelsets}. Two brief examples will demonstrate that these results may be useful for various applications.  The proof of the main theorem on strong convexity involves exploiting the so-called quadratic support property which is therefore introduced. Moreover, we present a proposition that gives a bound for generalized second-order derivatives of maximum functions. The proof of the main theorem for ordinary convexity is based, among other things, on that result. In Section \ref{s:application} we investigate sufficient conditions for attainable sets of nonlinear systems of the form \ref{e:ode} to be strongly convex as an application of the main 
theorem on strong convexity of sublevel sets. Our results are demonstrated on attainable sets that are relevant for the computation of a discrete abstraction (including past information \cite{GrueneMueller08,i11abs}) of a system consisting of a pendulum mounted on a cart.

We remark that some of the presented results have been announced in \cite{i13qsuppc}.
\section{Basic notation and terminology}
\label{s:preliminaries}
Throughout the paper, we mean by (ordinary) {convexity} its classical definition, e.g. \cite[\citeDefinition~1.3]{Valentine64}. $\mathbb{R}$ and $\mathbb{Z}$ denote the sets of real numbers and
integers, respectively, $\mathbb{R}_{+}$ and $\mathbb{Z}_{+}$, their
subsets of non-negative elements,
and $\mathbb{N} = \mathbb{Z}_{+} \setminus \{ 0 \}$.
$\intcc{a,b}$, $\intoo{a,b}$,
$\intco{a,b}$, and $\intoc{a,b}$
denote closed, open and half-open, respectively,
intervals with end points $a$ and $b$. As already mentioned, $\innerProd{\cdot}{\cdot}$ and $\|\cdot\|$ denote the standard
Euclidean product and norm, respectively, i.e.,
$
\innerProd{x}{y} = \sum_{i=1}^n x_i y_i
$
and
$\| x \| = \innerProd{x}{x}^{1/2}$
for any $x, y \in \mathbb{R}^n$.
$\oBall(x,r)$ and $\cBall(x,r)$ denote the
open and closed, respectively, ball of radius $r > 0$ centered at
$x$. 
The closure, the interior, the boundary and the convex hull of a set
$\Omega \subseteq \mathbb{R}^n$ are denoted by $\closure\Omega$,
$\interior\Omega$, $\boundary{\Omega}$, and $\conv \Omega$,
respectively. In particular, $\conv\{x,y\}$ is the line segment
$\{tx+(1-t)y \, | \, t \in [0,1]\}$.
A vector $v \in \mathbb{R}^n$ is
\begriff{normal to $\Omega$ at $x \in \boundary \Omega$} if
$\innerProd{v}{y-x} \leq 0$ for all $y \in \Omega$. If, additionally,
$\| v \|= 1$ then $v$ is a \begriff{unit normal to $\Omega$ at $x$}. 
Two vectors $x$ and $y$ are \begriff{perpendicular}, $x \perp y$, if $\innerProd{x}{y}=0$.
The derivative and the inverse of a
map $f$ is denoted by $f'$ and $f^{-1}$, respectively, and $f^{\ast}$
is the transpose of $f$ if $f \colon \mathbb{R}^n \to \mathbb{R}^m$ is
linear. We set $fh^k:=f(h,\ldots,h)$ if $f$ is $k$-linear. $f$ is \begriff{of class $C^k$} if $f$ is $k$-times continuously differentiable, and \begriff{of class $C^{1,1}$} if $f$ is of class $C^1$ and $f'$ is Lipschitz continuous. Let $U \subseteq \R^n$ be open. A map $f\colon U \to \R$ is a \begriff{$C^{1,1}$-submersion on its zero set} if, for every zero $x$ of $f$, $f$ is 
of class $C^{1,1}$ on a neighborhood of $x$ and $f'(x)$ is surjective \cite[\citeDefinition~4.52]{Zeidler.i}. By a {sequence} \begriff{$(x_k)_{k \in \mathbb{N}}$} in $X$ we mean a map $x\colon\mathbb{N}\to X$.\looseness=-1

\section{Convexity and strong convexity of sublevel sets} 
\label{s:mainresultssublevelsets}
Our main results are presented in Section \ref{ss:mainresults} together with a brief discussion of their hypotheses, and their application is illustrated in Section \ref{ss:applications}. The proofs of our main results, however, are postponed to Section \ref{ss:proofs} as 
several auxiliary results will be needed for both proofs.

We will assume the following for the sublevel sets $\Omega$ that we consider.
\begin{hypothesis}
\label{hy:g}
\newcommand{\raum}{\R^n}
$U \subseteq \raum$ is open, $m \in \mathbb{N}$, the maps
$g_1,\ldots,g_m \colon U \to \mathbb{R}$ are 
$C^{1,1}$-submersions on their zero sets as well as continuous, and the set $\Omega$ defined by \ref{e:sublevelset} is closed in $\R^n$ and connected.
\end{hypothesis}
We remark that the assumption on connectedness is essential for the main results below to be true. Although proving connectedness might be difficult in general, the sublevel sets considered in our applications (Sections \ref{ss:applications} and \ref{s:application}) will be connected by their definition.

For the maps $g_1,\ldots,g_m$ defining $\Omega$ in \ref{e:sublevelset} and for $x \in \Omega$ we denote  
\begin{align*}
\mathcal{A}(x)&=\{ i \in \{1,\ldots,m\} \ | \ g_i(x) =0\}, \\ 
\mathcal{C}(x)&=
\{ h \in \R^n \ | \ \forall_{j \in \mathcal{A}(x)} \ g'_j(x) h \leq 0\}.
\end{align*}
\subsection{The main results} We begin with the main result for ordinary convexity. 
\label{ss:mainresults}
\begin{theorem}
\label{thm:sublevelset:sufficiency:convex}
Let $\Omega$ be defined by \ref{e:sublevelset}. Assume \reftohypothesis{\ref{hy:g}}.
\begin{asparaenum}[(i)]
\item $\Omega$ is convex if for every $x \in \boundary \Omega$ we have $\interior \mathcal{C}(x) \neq \emptyset$ and there exists $i \in \mathcal{A}(x)$ such that 
\begin{equation}
\label{e:thm:sublevelset:sufficiency:convex}
\liminf_{t \to 0, t>0} \frac{g_i'(x+th)h}{t} \geq 0
\end{equation}
whenever $h \in \mathcal{C}(x) \cap \ker g_i'(x)$. 
\label{thm:sublevelset:sufficiency:i:convex}
\item 
\label{thm:sublevelset:sufficiency:iii:convex}
If $\Omega$ is convex, $x \in \boundary \Omega$, and the derivatives $g_i'(x)$ for every $i \in \mathcal{A}(x)$ are linearly independent, then \ref{e:thm:sublevelset:sufficiency:convex} holds for every $i \in \mathcal{A}(x)$ and all $h \in \mathcal{C}(x) \cap \ker g_i'(x)$. 
\end{asparaenum}
\end{theorem}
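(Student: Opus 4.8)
The plan is to handle the two assertions separately: I would reduce the sufficiency in part \ref{thm:sublevelset:sufficiency:i:convex} to the single-function case via the maximum $G:=\max_j g_j$, and prove the necessity in part \ref{thm:sublevelset:sufficiency:iii:convex} directly from convexity by exploiting monotonicity of the normal map. For \ref{thm:sublevelset:sufficiency:i:convex} I would first localize: since $\Omega$ is closed and connected by \reftohypothesis{\ref{hy:g}}, a local-to-global principle (a closed connected set all of whose points have a neighborhood meeting it in a convex set is convex) reduces everything to establishing local convexity at each $x\in\boundary\Omega$; interior points are harmless. Near such an $x$ the inactive constraints remain negative by continuity, so on a neighborhood $V$ one has $\Omega\cap V=\{y\in V\mid g_j(y)\le 0,\ j\in\mathcal{A}(x)\}=\{y\in V\mid G(y)\le 0\}$ with $G=\max_{j\in\mathcal{A}(x)}g_j$. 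The hypothesis $\interior\mathcal{C}(x)\neq\emptyset$ acts as the constraint qualification that makes $\mathcal{C}(x)$ the genuine tangent cone and keeps $\Omega$ solid near $x$, so that local convexity of $\Omega$ becomes convexity of the sublevel set of the single, locally Lipschitz function $G$.

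The core is a second-order analysis of $G$. Writing $\phi_j(t)=g_j(x+th)$, the quantity in \ref{e:thm:sublevelset:sufficiency:convex} is exactly the lower right second-order derivative $\liminf_{t\to 0^+}\phi_j'(t)/t$ at $0$, noting $\phi_j'(0)=g_j'(x)h=0$ for $h\in\ker g_j'(x)$. For a direction $h$ tangent to $\{G=0\}$ the constraints with $g_j'(x)h<0$ drop out to first order, and the announced bound for generalized second-order derivatives of maximum functions yields that the corresponding lower second-order derivative of $G$ equals the maximum, over the indices $j\in\mathcal{A}(x)$ with $g_j'(x)h=0$, of those of the $g_j$. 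Hence $G$ has nonnegative generalized curvature in a tangent direction precisely when at least one such active, tangent constraint does. I would then verify that the hypothesis delivers this: for directions with $g_i'(x)h=0$ it is immediate from \ref{e:thm:sublevelset:sufficiency:convex} applied to the chosen $i$, while for the remaining tangent directions (where the chosen $i$ is strictly decreasing) one passes to nearby boundary points at which the decreasing constraints have left the active set, applies the hypothesis there, and transfers the sign back to $x$ by a limiting argument. With the sublevel set of $G$ shown to satisfy the local second-order convexity criterion — the $m=1$ result of \cite{i07Convex}, read through generalized derivatives — local convexity of $\Omega$ at $x$ follows, and the local-to-global principle finishes the proof. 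I expect this combined use of the maximum-function estimate to be the main obstacle, precisely because of the directions in which the chosen index is strictly decreasing.

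For the necessity in \ref{thm:sublevelset:sufficiency:iii:convex} I would argue from convexity and the normal map. Linear independence of $\{g_j'(x)\mid j\in\mathcal{A}(x)\}$ forces the tangent cone to equal $\mathcal{C}(x)$ and the normal cone to equal $\{\sum_{j\in\mathcal{A}(x)}\lambda_j\, g_j'(x)^{\ast}\mid\lambda_j\ge 0\}$; in particular each $g_i'(x)^{\ast}$ is an outer normal to $\Omega$ at $x$, and $\interior\mathcal{C}(x)\neq\emptyset$ comes for free. Fix $i\in\mathcal{A}(x)$ and $h\in\mathcal{C}(x)\cap\ker g_i'(x)$. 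Using the submersion property I construct a $C^{1,1}$ arc $\gamma$ with $\gamma(0)=x$, $\gamma'(0)=h$, lying in $\Omega\cap\{g_i=0\}$, arranging feasibility of the other active constraints at second order by bending $\gamma$ into $\interior\mathcal{C}(x)$, which is possible because the gradients are independent. Then $v(t):=g_i'(\gamma(t))^{\ast}/\|g_i'(\gamma(t))\|$ is an outer unit normal at $\gamma(t)\in\boundary\Omega$, and monotonicity of the normal map of the convex set $\Omega$ gives $\innerProd{v(t)-v(0)}{\gamma(t)-\gamma(0)}\ge 0$.

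Since $\gamma(t)-x=th+O(t^2)$, $v(0)=g_i'(x)^{\ast}/\|g_i'(x)\|$, and $\innerProd{v(0)}{h}=g_i'(x)h/\|g_i'(x)\|=0$, dividing by $t$ and letting $t\to 0^+$ converts this inequality — after the $C^{1,1}$ estimate $g_i'(\gamma(t))h=g_i'(x+th)h+O(t^2)$ — into $\liminf_{t\to 0^+}g_i'(x+th)h/t\ge 0$, which is \ref{e:thm:sublevelset:sufficiency:convex}. Here the technical heart, and the place I expect to spend the most care, is the construction of the feasible $C^{1,1}$ arc and the passage through generalized second-order derivatives from normal-map monotonicity to the one-sided liminf, since under mere $C^{1,1}$ regularity the function $t\mapsto g_i(x+th)$ need not be twice differentiable and the sign information has to be read off along a carefully chosen direction rather than from pointwise second derivatives.
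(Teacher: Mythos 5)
There is a genuine gap in your proof of part \ref{thm:sublevelset:sufficiency:i:convex}, concentrated in the step where you reduce to the single function $G=\max_{j\in\mathcal{A}(x)}g_j$ and invoke ``the $m=1$ result of \cite{i07Convex}, read through generalized derivatives.'' That criterion requires the defining function to be a $C^{1,1}$-submersion on its zero set; $G$ is merely locally Lipschitz and is not differentiable where two active constraints cross, so neither the implicit-function (graph) representation on which the $m=1$ proof rests nor the criterion itself applies to $G$. A version of that criterion valid for max-functions is essentially the theorem you are trying to prove, so the reduction is circular. A second, independent problem sits in the same step: for a tangent direction $h$ with $g_i'(x)h<0$ for the hypothesis's chosen index $i$ but $g_j'(x)h=0$ for some other active $j$, Proposition \ref{th:secondderivatives} bounds $\overline{D}^2G(x,h^2)$ from below only by $\max_{j\in M}\underline{D}^2g_j(x,h^2)$ with $i\notin M$ (and it is an inequality for the \emph{upper} derivative, not an equality for the lower one), and the hypothesis at $x$ says nothing about any $j\in M$. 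Your proposed repair --- pass to nearby boundary points where the strictly decreasing constraint has left the active set and ``transfer the sign back by a limiting argument'' --- does not work: the quantities $\underline{D}^2g_j(\cdot,h^2)$ are $\liminf$-type and have no semicontinuity in the base point for $C^{1,1}$ data, the direction to be tested also moves with the base point, and the hypothesis at those nearby points may select yet another index.

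The paper escapes both difficulties by reducing to dimension one instead of to a nonsmooth $n$-dimensional criterion. Regular closedness (Lemma \ref{l:closureofinterior}) and Valentine's mild-convexity theorem \cite{Valentine64} show that non-convexity produces a single offending segment with direction $\zeta$ through a boundary point; choosing $v$ with $g_i'(x)v<0$ for all active $i$, the implicit function theorem yields $C^{1,1}$ graph functions $\mu_i$, the boundary is locally the graph of $\mu=\max_i\mu_i$, and only convexity of the scalar function $t\mapsto\mu(t\zeta)$ is needed. Proposition \ref{th:secondderivatives} is then applied to the $\mu_i$ (which, unlike $G$, are $C^{1,1}$), and the mechanism your plan lacks appears exactly here: the set of parameters $t$ at which two active graph functions have different derivatives along $\zeta$ is discrete, hence countable; off this set the test direction $p_1'(t\zeta)\zeta$ lies in $\ker g_j'(p(t\zeta))$ for \emph{every} active $j$, so whichever index the hypothesis supplies at $p(t\zeta)$ is applicable, and the one-dimensional convexity test (increasing right derivative, \cite{Walter04}) tolerates countably many exceptional parameters. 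The problematic directions you try to handle pointwise are simply never tested by the paper's argument.

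Your part \ref{thm:sublevelset:sufficiency:iii:convex}, by contrast, is correct and takes a genuinely different route. The paper perturbs $h$ to $\tilde h=h+\varepsilon\xi$ (with $g_1'(0)\xi=0$ and $g_j'(0)\xi<0$, available by linear independence), shows the negative liminf survives the perturbation by Lipschitz continuity, and contradicts convexity of $t\mapsto\mu_1(t\tilde h)$; you instead bend a feasible arc $\gamma$ inside $\{g_i=0\}$ at second order and use monotonicity of the normal map. Your key observation is valid: for convex $\Omega$ and $z\in\Omega$, the segment $\conv\{y,z\}$ lies in $\Omega$, so $g_i\le 0$ along it and $g_i'(y)(z-y)\le 0$, i.e., $g_i'(y)^\ast$ is normal at $y$; with $\gamma(t)-x=th+O(t^2)$ and Lipschitz continuity of $g_i'$, the monotonicity inequality indeed yields \ref{e:thm:sublevelset:sufficiency:convex} in the limit. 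The arc construction requires the same linear-independence bending as the paper's perturbation, with the second-order term scaled to dominate the $O(t^2)$ error of the graph function, but this is routine; what your route buys is a direct argument from convexity, at the price of the extra normalization and error bookkeeping that the paper's one-dimensional convexity argument avoids.
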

The analogous theorem for strong convexity is as follows.
\begin{theorem}
\label{thm:sublevelset:sufficiency}
Let $r>0$ and let $\Omega$ be defined by \ref{e:sublevelset}. Assume \reftohypothesis{\ref{hy:g}}.
\begin{asparaenum}[(i)]
\item
\label{thm:sublevelset:sufficiency:i}
$\Omega$ is $r$-convex if for every $x \in \partial \Omega$ we have
$\interior \mathcal{C}(x) \neq \emptyset$ and there exists
$i \in \mathcal{A}(x)$ such that
\begin{equation}
\label{e:thm:sublevelset:sufficiency}
\liminf_{t\to 0,t>0}\frac{g'_i(x+th)h}{t} \geq \frac{1}{r}\|g_i'(x)\|\cdot \|h\|^2
\end{equation}
whenever $h \in \mathcal{C}(x)\cap \ker g'_i(x)$.
\item
\label{thm:sublevelset:sufficiency:ii}
$\Omega$ is $r$-convex if \ref{e:thm:sublevelset:sufficiency} holds
for every $x \in \boundary \Omega$, every $i \in \mathcal{A}(x)$, and
all $h \in \mathcal{C}(x)\cap \ker g'_i(x)$.
\item
\label{thm:sublevelset:sufficiency:iii}
If $\Omega$ is $r$-convex, $x \in \partial \Omega$, and the
derivatives $g_i'(x)$ for $i \in \mathcal{A}(x)$ are linearly
independent, then \ref{e:thm:sublevelset:sufficiency} holds for every
$i \in \mathcal{A}(x)$ and all
$h \in \mathcal{C}(x)\cap \ker g'_i(x)$.
\end{asparaenum}
\end{theorem}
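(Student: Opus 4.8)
My plan is to reduce $r$-convexity to the two conditions of Proposition~\ref{p:globalchar}, i.e.\ to convexity together with the supporting-ball property, and to reformulate the latter as a \emph{quadratic support inequality}. Indeed, for a boundary point $x\in\boundary\Omega$ and a unit normal $v$ to $\Omega$ at $x$, expanding $\|y-(x-rv)\|^2\le r^2$ shows that the inclusion $\Omega\subseteq\cBall(x-rv,r)$ is equivalent to
\[
\innerProd{v}{y-x}\le-\frac{1}{2r}\|y-x\|^2 \quad\text{for all } y\in\Omega .
\]
A sphere of radius $r$ gives equality here, which is the geometric reason why the constant in \ref{e:thm:sublevelset:sufficiency} is calibrated to $\frac1r\|g_i'(x)\|$. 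The proof then splits into an analytic core, deriving this inequality \emph{locally} near each boundary point from the second-order condition, and a geometric core, upgrading the local inequality to a global one; I take these in turn and treat the converse \ref{thm:sublevelset:sufficiency:iii} separately at the end.

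For the analytic core, fix $x\in\boundary\Omega$ and an active index $i\in\mathcal{A}(x)$ for which \ref{e:thm:sublevelset:sufficiency} holds, and put $v_i=g_i'(x)^\ast/\|g_i'(x)\|$, which is well defined because each $g_j$ is a submersion at its zeros. For $y=x+h$ close to $x$ with $g_i(y)\le0$, the fundamental theorem of calculus gives $\|g_i'(x)\|\innerProd{v_i}{h}+R_i(h)\le0$, where $R_i(h)=\int_0^1(g_i'(x+sh)-g_i'(x))h\,ds$, so it suffices to show $R_i(h)\ge\frac{1}{2r}\|g_i'(x)\|\|h\|^2$ for the feasible directions. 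Along a tangent direction $h\in\ker g_i'(x)$ the first-order term drops out and \ref{e:thm:sublevelset:sufficiency} bounds the integrand from below by $\frac{s}{r}\|g_i'(x)\|\|h\|^2$ to leading order; integrating over $s\in\intcc{0,1}$ produces exactly the factor $\frac12$. Directions with $\innerProd{v_i}{h}$ bounded away from $0$ relative to $\|h\|$ are handled by the dominant, negative first-order term. I expect \textbf{this} to be the principal obstacle: \ref{e:thm:sublevelset:sufficiency} is only a \emph{liminf} along each fixed ray and is a priori non-uniform in the direction, whereas the integration above needs a lower bound on the integrand that is uniform over nearly tangent directions. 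Converting the directional liminf into such a uniform estimate is where the Lipschitz continuity of $g_i'$ (the $C^{1,1}$ hypothesis) and the local quadratic-support machinery enter, and it is the technically delicate step.

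With local quadratic support available at every boundary point, I would first secure convexity of $\Omega$. In case~\ref{thm:sublevelset:sufficiency:i} the hypothesis $\interior\mathcal{C}(x)\neq\emptyset$ together with the implication \ref{e:thm:sublevelset:sufficiency}$\Rightarrow$\ref{e:thm:sublevelset:sufficiency:convex} (the right-hand side of \ref{e:thm:sublevelset:sufficiency} being nonnegative) lets me invoke Theorem~\ref{thm:sublevelset:sufficiency:convex}\ref{thm:sublevelset:sufficiency:i:convex}; in case~\ref{thm:sublevelset:sufficiency:ii} local quadratic support already yields a local supporting half-space at each boundary point, so $\Omega$ is locally convex and hence convex by the Tietze--Nakajima theorem, using that $\Omega$ is closed and connected by \reftohypothesis{\ref{hy:g}}. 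It then remains to upgrade the local quadratic support to the global supporting-ball condition of Proposition~\ref{p:globalchar}\ref{p:globalchar:ii}; for a closed, convex, connected set this is the characterization of $r$-convexity via local quadratic support announced in the abstract, which I would prove by a continuation argument, showing that for fixed $(x,v)$ the set of $y\in\Omega$ satisfying the global inequality is both open and closed in $\Omega$. In case~\ref{thm:sublevelset:sufficiency:ii} I must moreover cover unit normals that are nontrivial combinations $\sum_j\lambda_j v_j$ of active ones; these follow by taking the corresponding convex combination of the per-constraint quadratic inequalities, whose common right-hand side $-\frac{1}{2r}\|y-x\|^2$ is preserved.

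For the converse~\ref{thm:sublevelset:sufficiency:iii} I argue in reverse. Assuming $\Omega$ is $r$-convex and the active gradients are linearly independent, $v_i=g_i'(x)^\ast/\|g_i'(x)\|$ is a genuine unit normal to $\Omega$ at $x$, since under that linear independence it lies in the normal cone generated by the active gradients; hence Proposition~\ref{p:globalchar}\ref{p:globalchar:ii} supplies the global quadratic inequality for $v_i$. Given $h\in\mathcal{C}(x)\cap\ker g_i'(x)$, the linear independence and the submersion property let me construct, via the implicit function theorem, a feasible arc $\gamma$ with $\gamma(0)=x$ and $\gamma'(0)=h$ that keeps $g_i\circ\gamma\equiv0$ while $g_j\circ\gamma\le0$ for the remaining active $j$, so $\gamma(t)\in\Omega$ for small $t\ge0$. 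Substituting $\gamma(t)$ into the quadratic inequality and expanding $g_i$ along both $\gamma$ and the ray $x+th$ with the $C^{1,1}$ remainder, the surviving second-order terms reproduce \ref{e:thm:sublevelset:sufficiency}, the liminf appearing precisely because only a one-sided lower second-order bound persists under $C^{1,1}$ (rather than $C^2$) regularity. As in the sufficiency direction, the delicate point is uniformity, here in controlling the second-order discrepancy between the arc $\gamma$ and the straight ray $x+th$.
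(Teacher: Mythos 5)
Your overall architecture --- second-order condition $\Rightarrow$ local quadratic support $\Rightarrow$ $r$-convexity --- is in fact the paper's, but at each stage the load-bearing step is missing. First, you yourself flag the conversion of the per-direction liminf in \ref{e:thm:sublevelset:sufficiency} into an estimate uniform over nearly tangent directions as ``the technically delicate step'' and do not carry it out; that step \emph{is} the analytic core, and the paper resolves it not by your FTC/integration route but by optimization: Lemma \ref{l:optimalityandqsupp} recasts local quadratic support with radius $s$ as local minimality of the boundary point for \ref{e:optimizationproblem} with objective $f(x)=-\|g_1'(0)\|\bigl(\innerProd{v}{x}+\|x-\innerProd{v}{x}\cdot v\|^2/(2s)\bigr)$, and then applies a $C^{1,1}$ second-order sufficient optimality condition (Theorem \ref{t:sosc}, resting on \cite{LiXu10}); the hypothesis at radius $r$ supplies the needed \emph{strict} inequality at every $s>r$, and $r$-convexity follows in the limit. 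Second, your local-to-global ``continuation'' is unsound as stated: for fixed $(x,v)$ the set $\{y\in\Omega : \innerProd{v}{y-x}\le -\|y-x\|^2/(2r)\}$ is closed but there is no reason for it to be open in $\Omega$ at points of equality; the paper instead invokes Mayer's theorem (Theorem \ref{thm:mayer}) via Theorem \ref{th:rConvQuadSupp} and Corollary \ref{cor:qsupp:clintOmega}. Moreover these results concern \emph{open} connected sets, so one must first establish $\Omega=\closure(\interior\Omega)$ with $\interior\Omega$ connected --- Lemma \ref{l:closureofinterior}, which is where the constraint qualification actually does its work --- and your proposal never addresses this. Your reduction in case \ref{thm:sublevelset:sufficiency:ii} also fails: local quadratic support only places $U\cap\Omega$ on one side of a paraboloid, it does not make $U\cap\Omega$ convex, so Tietze--Nakajima does not apply; the paper instead proves that under the hypotheses of \ref{thm:sublevelset:sufficiency:ii} the constraint qualification cannot fail (if $\interior\mathcal{C}(0)=\emptyset$, Rockafellar's theorem yields some $v_i\perp\mathcal{C}(0)$, and quadratic support then makes $0$ isolated, forcing $\Omega=\{0\}$), thereby reducing \ref{thm:sublevelset:sufficiency:ii} to \ref{thm:sublevelset:sufficiency:i}.

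For the converse \ref{thm:sublevelset:sufficiency:iii}, your plan extracts too little information to yield a \emph{pointwise} liminf. Substituting the arc $\gamma$ into the single ball inequality at $(x,v_i)$ and expanding $g_i(\gamma(t))=0$ gives only a lower bound on the integral $\int_0^1\bigl[g_i'(x+s(\gamma(t)-x))-g_i'(x)\bigr](\gamma(t)-x)\,ds$ of order $\tfrac{t^2}{2r}\|g_i'(x)\|\,\|h\|^2$, i.e.\ a running-average bound on $\sigma\mapsto g_i'(x+\sigma h)h$. For a merely Lipschitz derivative such a bound does \emph{not} imply \ref{e:thm:sublevelset:sufficiency}: a Lipschitz $\psi$ with $\psi(0)=0$ can satisfy $\int_0^t\psi\ge Ct^2$ for all small $t$ while $\liminf_{t\to0,t>0}\psi(t)/t<2C$ (carve sparse, steep dips into $\psi(t)=Mt$ with $M$ large). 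The theorem is nonetheless true because $r$-convexity gives supporting balls at \emph{all} nearby boundary points, not just at $x$; the paper exploits this by flattening $\Omega$ with a diffeomorphism $F$ of controlled curvature so that $F(\Omega)$ is convex, whence the one-dimensional boundary function $t\mapsto\hat\mu(t\tilde h)$ is convex and its derivative monotone --- that monotonicity is what produces pointwise information, which the chain rule (picking up the $F''$ term) converts into a contradiction. Finally, your feasible arc with $\gamma'(0)=h$ need not exist when $g_j'(x)h=0$ for another active $j$; the paper circumvents this with the perturbation $\tilde h=h+\varepsilon\xi$, $g_1'(0)\xi=0$ and $g_j'(0)\xi<0$ for $j>1$ (this is precisely where linear independence enters), together with a Lipschitz stability estimate for the liminf defect under perturbation of $h$.
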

\myvspace
An immediate consequence of Theorem \ref{thm:sublevelset:sufficiency} is the next corollary which will be important for the applications in the next subsection and in Section \ref{s:application}. 
\begin{corollary}
\label{thm:sublevelset:g}
Let $m=1$, $r>0$, and let $\Omega$ be defined by \ref{e:sublevelset}. Assume \reftohypothesis{\ref{hy:g}}. Then $\Omega$ is $r$-convex if and only if 
\ref{e:thm:sublevelset:sufficiency}
holds for $i=1$, all $x \in \partial \Omega$ and all $h \in \ker g_1'(x)$.
\end{corollary}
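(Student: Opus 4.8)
The plan is to obtain both implications directly from parts \ref{thm:sublevelset:sufficiency:ii} and \ref{thm:sublevelset:sufficiency:iii} of Theorem~\ref{thm:sublevelset:sufficiency}, after first simplifying the objects $\mathcal{A}(x)$ and $\mathcal{C}(x)$ in the special case $m=1$. Indeed, the whole content already resides in the theorem, and the corollary is just the instance in which these objects collapse to something transparent.

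First I would record three elementary observations. Let $x \in \boundary\Omega$. Since $\Omega$ is closed, $x \in \Omega$, hence $g_1(x) \leq 0$; and since $x$ is not interior to $\Omega$, continuity of $g_1$ forbids $g_1(x) < 0$, so that $g_1(x) = 0$ and therefore $\mathcal{A}(x) = \{1\}$. The submersion hypothesis in \reftohypothesis{\ref{hy:g}} then forces $g_1'(x) \neq 0$, since surjectivity of a linear map $\R^n \to \R$ is precisely non-vanishing. Finally, as $g_1'(x)h = 0$ trivially implies $g_1'(x)h \leq 0$, we have $\ker g_1'(x) \subseteq \mathcal{C}(x)$, whence $\mathcal{C}(x) \cap \ker g_1'(x) = \ker g_1'(x)$. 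Consequently, the condition in the corollary, namely \ref{e:thm:sublevelset:sufficiency} for $i=1$, all $x \in \boundary\Omega$, and all $h \in \ker g_1'(x)$, coincides verbatim with the condition ``\ref{e:thm:sublevelset:sufficiency} for every $i \in \mathcal{A}(x)$ and all $h \in \mathcal{C}(x) \cap \ker g_i'(x)$'' appearing in the theorem.

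For the ``if'' direction I would simply invoke Theorem~\ref{thm:sublevelset:sufficiency}\,\ref{thm:sublevelset:sufficiency:ii}: by the identifications above, assuming \ref{e:thm:sublevelset:sufficiency} for $i=1$ and all $h \in \ker g_1'(x)$ at every boundary point is exactly the hypothesis of that part, and its conclusion is the $r$-convexity of $\Omega$. For the ``only if'' direction I would use Theorem~\ref{thm:sublevelset:sufficiency}\,\ref{thm:sublevelset:sufficiency:iii}. Its only nontrivial hypothesis, linear independence of the family $(g_i'(x))_{i \in \mathcal{A}(x)}$, reduces here to $g_1'(x) \neq 0$, which was established above; hence for each $x \in \boundary\Omega$ that part yields \ref{e:thm:sublevelset:sufficiency} for $i=1$ and all $h \in \mathcal{C}(x) \cap \ker g_1'(x) = \ker g_1'(x)$, as asserted.

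I expect no genuine analytic obstacle in this corollary, since all of the work is carried by Theorem~\ref{thm:sublevelset:sufficiency}. The only point requiring care is the routine bookkeeping of the previous paragraph: verifying that for $m=1$ the active-index set reduces to a singleton, that the constraint cone $\mathcal{C}(x)$ contains $\ker g_1'(x)$ so the two index-and-direction conditions match, and that the single gradient $g_1'(x)$ is nonzero, making the linear-independence assumption of part \ref{thm:sublevelset:sufficiency:iii} automatic.
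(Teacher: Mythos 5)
Your proposal is correct and follows exactly the route the paper intends: the paper presents the corollary as an immediate consequence of Theorem~\ref{thm:sublevelset:sufficiency}, and your derivation (the ``if'' direction from part~\ref{thm:sublevelset:sufficiency:ii}, the ``only if'' direction from part~\ref{thm:sublevelset:sufficiency:iii}, after noting that for $m=1$ every boundary point satisfies $g_1(x)=0$, $g_1'(x)\neq 0$, and $\mathcal{C}(x)\cap\ker g_1'(x)=\ker g_1'(x)$) is precisely the intended bookkeeping.
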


\myvspace
We continue with several remarks on the above results. \\
If the map $g_i$ that is involved in \ref{e:thm:sublevelset:sufficiency:convex},\ref{e:thm:sublevelset:sufficiency} is of class $C^2$ rather than merely $C^{1,1}$ then the left hand sides of the latter inequalities reduce to $g_i''(x)h^2$. The assumption on the non-emptiness of the interior of the cones $\mathcal{C}(\cdot)$ in Theorem \ref{thm:sublevelset:sufficiency:convex}\ref{thm:sublevelset:sufficiency:i:convex} (in Theorem \ref{thm:sublevelset:sufficiency}\ref{thm:sublevelset:sufficiency:i}, respectively) and the linear independence of the derivatives in Theorem \ref{thm:sublevelset:sufficiency:convex}\ref{thm:sublevelset:sufficiency:iii:convex} (in Theorem \ref{thm:sublevelset:sufficiency}\ref{thm:sublevelset:sufficiency:iii}, respectively) are known as \begriff{constraint qualifications} in the field of optimization theory. Both assumptions are essential for the correctness of the statements as we will see in Examples \ref{ex:thm:convex:3} and \ref{ex:thm:convex:1}, respectively.
Assertions \ref{thm:sublevelset:sufficiency:i:convex} and \ref{thm:sublevelset:sufficiency:iii:convex} of Theorem \ref{thm:sublevelset:sufficiency:convex} correspond to assertions \ref{thm:sublevelset:sufficiency:i} and \ref{thm:sublevelset:sufficiency:iii} of Theorem \ref{thm:sublevelset:sufficiency} in the limit $r \to \infty$. However, the analogue to Theorem \ref{thm:sublevelset:sufficiency}\ref{thm:sublevelset:sufficiency:ii} does not hold in the setting of ordinary convexity in Theorem \ref{thm:sublevelset:sufficiency:convex}. (See Example \ref{ex:thm:convex:2}.)
We also note that both directions of \cite[\citeTheorem~1]{HenrionLouembet12} are untrue, in general, as Examples \ref{ex:thm:convex:1} and \ref{ex:thm:convex:2} reveal.

\vspace*{3pt}
\begin{example}
\label{ex:thm:convex:3}
Let $m = 2$, $U = \R^2$, $\Omega$ defined by \ref{e:sublevelset} and the maps $g_1,g_2 \colon \R^2 \to \R$ which are given by $g_1(x) = \|x\|^2 - 1$, $g_2(x)=-g_1(x)$. Then $\Omega = \boundary \cBall(0,1)$ is obviously not convex but \ref{e:thm:sublevelset:sufficiency:convex} is satisfied for $i=1$. The constraint qualification in the hypothesis of Theorem \ref{thm:sublevelset:sufficiency:convex}\ref{thm:sublevelset:sufficiency:i:convex} is not fulfilled since $\interior \mathcal{C}(x) = \emptyset$ for every $x \in \Omega$. Indeed, $\mathcal{C}(x) = \ker g'_1(x) = \ker g'_2(x)$. 
\end{example}

\begin{example}
\label{ex:thm:convex:1}
Let $g_1,g_2 \colon \R^2 \to \R$ be given by $g_1(x_1,x_2) = x_1^2+x_2^2 -1$ and $g_2(x_1,x_2) = - ( (x_1-2)^2 + x_2^2 -1)$. Let $\Omega$ be given by \ref{e:sublevelset} with $m = 2$ and $U = \R^2$. It is easy to see that $\Omega = \cBall(0,1)$, hence $\Omega$ is convex. For $h = (0,1)^\ast$ we have $g_2 ''(1,0)h^2 = -2 < 0$. However, in \cite[\citeTheorem~1]{HenrionLouembet12} the converse inequality was claimed. The assumption on the linear independence in Theorem \ref{thm:sublevelset:sufficiency:convex}\ref{thm:sublevelset:sufficiency:iii:convex} is not satisfied at the point $(1,0)$, and condition \ref{e:thm:sublevelset:sufficiency:convex} does not hold for $i=2$.
\end{example}

Examples \ref{ex:thm:convex:3} and \ref{ex:thm:convex:1} illustrate similarly that the constraint qualifications in Theorem \ref{thm:sublevelset:sufficiency} cannot be dropped. The next example shows that the analogous statement to Theorem \ref{thm:sublevelset:sufficiency}\ref{thm:sublevelset:sufficiency:ii} does not hold for ordinary convexity.
\begin{example}
\label{ex:thm:convex:2}
Let $g_1,\ldots,g_5 \colon \R^3 \to \R$ be given by 
\begin{align*}
g_{1}(x_1,x_2,x_3) &= x_1^3x_2^3 + x_3, \ g_2 = - g_1, \\
g_3(x_1,x_2,x_3) &= x_1, \\
g_{4}(x_1,x_2,x_3) &= x_3, \ g_5 = -g_4,
\end{align*}
and $\Omega$ as in \ref{e:sublevelset} with $m=5$, $U=\R^3$. According to the above definitions, 
\begin{equation*}
\Omega = \{ (x_1,x_2,x_3) \in \R^3 \ | \ x_1 \leq 0,x_2 = x_3 = 0\} \cup  \{ (x_1,x_2,x_3) \in \R^3 \ | \ x_1 = x_3 = 0\},
\end{equation*}
which is obviously not convex, in contrast to what was claimed in \cite[\citeTheorem~1]{HenrionLouembet12}. Note that \ref{e:thm:sublevelset:sufficiency:convex} holds at $x \in \boundary\Omega$ for \textit{all} indices $i \in \{1,\ldots,5\}$ and all $h \in \ker g_i(x)$. The calculations are straightforward. 
\end{example}
\subsection{Immediate applications} 
\label{ss:applications}
As we have already discussed, the need for convexity criteria is ubiquitous, e.g. \cite{Lasserre10,HenrionLouembet12,AhmadiOlshevskyParriloTsitsiklis13}. In this paper we restrict ourselves to applications of our main result on strong convexity. The case of attainable sets of nonlinear differential equations, to be discussed in Section \ref{s:application}, will be the main application. We emphasize, however, that our main results consitute
powerful tools for a much broader field of applications. This
is demonstrated here by recovering, quite conveniently, two
results from the literature.

\begin{example}
Let $P \in \mathbb{R}^{n \times n}$ be a symmetric and positive
definite matrix. Let $\mu_+(P)$ and $\mu_{-}(P)$ denote the maximum and minimum, respectively, eigenvalues of $P$, and let the ellipsoid $\Omega$ be given by
$
\Omega = \{ x \in \mathbb{R}^n | \innerProd{x}{Px} \leq 1 \}.
$
Then the condition
\begin{equation}
\label{e:ex:radius}
r
\geq
\mu_+(P)^{1/2} / \mu_-(P)
\end{equation}
implies that $\Omega$ is $r$-convex \cite[\citeTheorem~3]{Polovinkin96}. The condition is, in fact, both sufficient and necessary.
To see this, define $g(x) = \innerProd{x}{Px}-1$ to obtain $g'(x) h = 2 \innerProd{h}{Px}$ and $g''(x)h^2 = 2 \innerProd{h}{Ph}$. Then $g(x)=0$ implies $\| g'(x) \| \leq 2 \| P^{1/2}\|$. Hence, we obtain the bound \looseness=-1
\begin{equation}
\label{e:ex:P}
\frac{\|g'(x)\| \cdot \|h\|^2}{g''(x)h^2} \leq \frac{\mu_+(P)^{1/2}}{\mu_-(P)}
\end{equation}
for $h \neq 0$, which is attained if $x$ and $h$ are eigenvectors
corresponding to eigenvalues $\mu_+(P)$ and $\mu_{-}(P)$, respectively. By Corollary \ref{thm:sublevelset:g}, $\Omega$ is $r$-convex if and only if \ref{e:ex:radius} holds.
\end{example}

As already mentioned in the introduction, our result also recovers \cite[\citeTheorem~12]{JourneeNesterovRichtarikSepulchre08}. To show this, we need the following definition \cite{Plis74,JourneeNesterovRichtarikSepulchre08}.

\begin{definition}
\label{def:plis}
\newcommand{\PlisParameter}{\sigma}
\newcommand{\raum}{\R^n}
\newcommand{\orthogonal}[2]{#1\perp #2}
Let $\PlisParameter>0$. A set $\Omega \subseteq \raum$ is called \begriff{$\PlisParameter$-regular} if 
\begin{equation}
\label{e:epsregular}
\cBall(\alpha x + (1-\alpha)y,\PlisParameter \! \cdot \! \alpha (1-\alpha) \|x-y\|^2)\subseteq \Omega
\end{equation}
for all $x,y\in \Omega$ and all $\alpha \in \intoo{0,1}$. 
\end{definition}

The following result has been given in \cite{Vial83} without proof. Note that $r$ and $\sigma$ are in reciprocal proportion in the statement.
{
\newcommand{\raum}{\R^n}
\begin{lemma}
\label{l:equivalence:rConv:epsregular}
Let $\Omega \subseteq \raum$ be closed, and $r>0$. Then $\Omega$ is $r$-convex if and only if $\Omega$ is $\sigma$-regular for $\sigma = 1/(2r)$.
\end{lemma}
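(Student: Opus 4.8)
The plan is to prove the two implications separately, with the whole argument resting on one elementary observation: every closed ball of radius $r$ is $\sigma$-regular for $\sigma = 1/(2r)$, and an arbitrary intersection of $\sigma$-regular sets is again $\sigma$-regular. The first fact follows from the identity $\|\alpha x + (1-\alpha)y\|^2 = \alpha\|x\|^2 + (1-\alpha)\|y\|^2 - \alpha(1-\alpha)\|x-y\|^2$ combined with the choice $\sigma = 1/(2r)$ (it amounts to the elementary estimate $\sqrt{r^2-A} + A/(2r) \leq r$ for $0 \leq A \leq 2r^2$), while the second is immediate from Definition \ref{def:plis}, since a ball witnessing \ref{e:epsregular} for two points of an intersection is a ball for those points in each member. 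Because the lens on the left-hand side of \ref{linse} is, by Definition \ref{def:StrongConvexity}, an intersection of balls of radius $r$ each containing its two generating points $x$ and $y$, it follows that every such lens is itself $\sigma$-regular and contains $x$ and $y$.

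For the implication from $r$-convexity to $\sigma$-regularity I would then argue directly. Fix $x,y\in\Omega$ and $\alpha\in\intoo{0,1}$. Since the lens generated by $x$ and $y$ is $\sigma$-regular and contains both points, the ball $\cBall(\alpha x + (1-\alpha)y,\ \sigma\alpha(1-\alpha)\|x-y\|^2)$ is contained in that lens; and since $\Omega$ is $r$-convex, the lens is contained in $\Omega$. Chaining the two inclusions yields \ref{e:epsregular}, so no appeal to Proposition \ref{p:globalchar} is needed in this direction.

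The substantial direction is that $\sigma$-regularity implies $r$-convexity, and here I would invoke Proposition \ref{p:globalchar}: for closed $\Omega$ it suffices to establish convexity together with condition \ref{p:globalchar:ii}. Convexity is easy, because letting $\alpha$ range over $\intoo{0,1}$ in \ref{e:epsregular} already places the whole segment $\conv\{x,y\}$ in $\Omega$. For the supporting-ball condition, fix $x\in\boundary\Omega$, a unit normal $v$ to $\Omega$ at $x$, and an arbitrary $y\in\Omega$; the goal $\|y-(x-rv)\|\leq r$ reduces, after expanding the square, to $\|x-y\|^2 \leq 2r\innerProd{x-y}{v}$. To obtain it, apply $\sigma$-regularity to the pair $x,y$: for each $\alpha\in\intoo{0,1}$ the point $\alpha x + (1-\alpha)y + \sigma\alpha(1-\alpha)\|x-y\|^2\,v$ lies in the ball of \ref{e:epsregular}, hence in $\Omega$, so the defining inequality of a normal vector applies to it. After dividing by $1-\alpha$ this gives $\sigma\alpha\|x-y\|^2 \leq \innerProd{x-y}{v}$ for every $\alpha$, and the limit $\alpha\to 1$ produces exactly the required estimate with $\sigma=1/(2r)$.

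I expect this supporting-ball step to be the main obstacle. For a single pair the $\sigma$-regular ball reaches only height of order $\|x-y\|^2/(8r)$ above the segment, whereas the lens extends to height $r-\sqrt{r^2-\|x-y\|^2/4}$, which is strictly larger; thus lens~$\subseteq\Omega$ cannot be read off from one application of \ref{e:epsregular}, and it is precisely the passage to a boundary normal together with the limit $\alpha\to1$ that recovers the sharp radius. Finally I would dispose of the degenerate case in which $\Omega$ contains two points at distance greater than $2r$: then the intersection in \ref{linse} is all of $\R^n$, but the supporting-ball conclusion just established forces a proper nonempty $\Omega$ to lie in a ball of radius $r$, so this case cannot occur for such $\Omega$, while both conditions hold trivially when $\Omega$ is empty or equal to $\R^n$.
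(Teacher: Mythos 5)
Your proof is correct, but in the substantial direction it takes a genuinely different route from the paper's. For the implication from $r$-convexity to $\sigma$-regularity the two arguments share the same core: the paper fixes a ball $\cBall(c,r)$ with $x,y$ on its boundary and verifies $\rho + \|c-z\| \leq r$ by an elementary contradiction, which is precisely your claim that every ball of radius $r$ is $1/(2r)$-regular; your additional observation that $\sigma$-regularity is preserved under arbitrary intersections (so the whole lens is $\sigma$-regular) is a tidy structural repackaging of that same computation. The real difference is in the converse. The paper introduces the quantity $\delta^\circ_{\Omega,x}(\varepsilon)$, deduces $\delta^\circ_{\Omega,x}(\varepsilon) \geq \varepsilon^2/(8r)$ from $1/(2r)$-regularity, and then appeals to the local second-order characterization of $r$-convexity in \cite[\citeTheorem~2.2]{i12sConvex}, an external result not reproduced in the paper. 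You instead reduce to Proposition \ref{p:globalchar}, which the paper does state: convexity is immediate from \ref{e:epsregular}, and the supporting-ball inclusion $\Omega \subseteq \cBall(x-rv,r)$ follows by testing the normal inequality at the extremal point $\alpha x + (1-\alpha)y + \sigma\alpha(1-\alpha)\|x-y\|^2\, v$ of the regularity ball, dividing by $1-\alpha$, and letting $\alpha \to 1$; this computation indeed yields $\|x-y\|^2 \leq 2r\innerProd{x-y}{v}$, which is equivalent to the desired inclusion. Your route buys an elementary, self-contained argument relative to the paper's own text (inner-product algebra plus one limit); its only hidden cost appears in your degenerate-case remark, where concluding that a proper nonempty $\Omega$ lies in some ball of radius $r$ presupposes that such a closed convex set has at least one boundary point admitting a unit normal --- true by the supporting-hyperplane theorem, but worth stating. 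The paper's route is shorter on the page but outsources the analytic work to a citation. Your closing observation, that a single application of \ref{e:epsregular} only reaches height of order $\|x-y\|^2/(8r)$ above the segment and hence cannot produce the lens directly, correctly identifies why some characterization of $r$-convexity --- local as in the paper, or global as in your proof --- is unavoidable in this direction.
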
 
}
\begin{proof}
\newcommand{\PlisParameter}{\sigma}
\newcommand{\raum}{\R^n}
\newcommand{\orthogonal}[2]{#1\perp #2}
For every $x \in \Omega$ the quantity $\delta_{\Omega,x}^\circ(\varepsilon)$ defined by the formula
\begin{align*}
\sup \big \{ \delta \geq 0 \; \big | \; y \in \Omega, \
&v \in \raum, \ \|v\| =1, 
\ \|x-y\|=\varepsilon, \\ &\orthogonal{v}{(x-y)}  \Longrightarrow {(x+y)}/{2} +\delta \cdot v \in \Omega \big \}
\end{align*} 
induces a map $\delta_{\Omega,x}^\circ$ on the non-negative real numbers, where
we have adopted the convention $\sup \emptyset =- \infty$.
Then
$
\delta^\circ_{\Omega,x}(\varepsilon) \geq \varepsilon^2 / (8r)
$
for all $\varepsilon>0$ if $\Omega$ is $1/(2r)$-regular,
so $\liminf_{\varepsilon \to 0,\varepsilon>0} \delta^\circ_{\Omega,x}(\varepsilon)/\varepsilon^2 \geq 1/(8r)$.
Since $\Omega$ is convex it follows that $\Omega$ is $r$-convex \cite[\citeTheorem~2.2]{i12sConvex}. \\
Conversely, assume that $\Omega$ is $r$-convex, and $\Omega \neq \R^n$ without loss of generality. Let $x,y \in \Omega$, hence $\|x-y\|\leq 2r$. In view of \ref{e:epsregular} and \ref{linse}, we need to show $\cBall(z,\rho) \subseteq \cBall(c,r)$ for $z = \alpha x + ( 1 - \alpha ) y$, $\rho = (2r)^{-1} \alpha (1-\alpha) \varepsilon^2$, $\varepsilon = \| x-y\|$, $\alpha \in \intoo{0,1}$ and $c \in \R^n$ such that $x,y \in \cBall(c,r)$. It is easy to see that it is enough to consider $c \in \R^n$ such that $x,y \in \boundary \cBall(c,r)$. Let $x+y = 0$ without loss of generality. To show $\cBall(z,\rho) \subseteq \cBall(c,r)$, it is enough to prove $\rho + \|c-z\|\leq r$. Suppose the reverse inequality for some $\alpha$. It is straightforward to conclude 
\begin{align*}
\|c-z\|^2 = r^2 - \varepsilon^2/4 + \|z\|^2 = r^2 - \varepsilon^2 \alpha ( 1 - \alpha),
\end{align*}
hence $(2r)^{-1} \alpha ( 1 - \alpha) \varepsilon^2 + r \sqrt{1 - \varepsilon^2 \alpha ( 1- \alpha)/r^2}>r$ by the assumption on $\alpha$. That is, $\sqrt{1 - \varepsilon^2 \alpha (1-\alpha)/r^2} > 1 - (2r^2)^{-1} \alpha ( 1- \alpha)\varepsilon^2$, which is a contradiction. 
\end{proof}

\myvspace
We now recover \cite[\citeTheorem~12]{JourneeNesterovRichtarikSepulchre08} in the example below.
\begin{example}
\newcommand{\raum}{\R^n}
Let $f \colon \raum \to \R_+$ be of class $C^{1,1}$, let $L>0$ be a
Lipschitz constant for $f'$, and let $\sigma>0$ be such that for any $x, h \in \raum$ it holds
$
f(x+h) \geq f(x) + f'(x)h + \frac{\sigma}{2}\|h\|^2.
$
Then the set 
$
\Omega_\omega :=\{ x \in \R^n \ | \ f(x) \leq \omega \}
$
is $r$-convex if 
\begin{equation}
\label{e:Journee:radius}
r \geq  \sigma^{-1} \sqrt{2L \omega}
\end{equation}
and $\omega > 0$. This follows from \cite[\citeTheorem~12]{JourneeNesterovRichtarikSepulchre08} and Lemma \ref{l:equivalence:rConv:epsregular}.
In order to prove the result using Corollary \ref{thm:sublevelset:g},
assume without loss of generality that $\Omega_\omega$ consists of
more than one point. Then $f(x) = \omega$ implies $f'(x) \neq 0$ since
otherwise $f(x+h) \geq \omega + (\sigma/2) \|h\|^2$ for all $h \in
\R^n$, which is a contradiction as we excluded the case of
$\Omega_\omega$ being a singleton. Moreover, $\|f'(x)\| \leq \sqrt{2 L
  f(x)}$ \cite[\citeProposition~11(ii)]{JourneeNesterovRichtarikSepulchre08} and $\liminf_{t \to 0, t>0}
f'(x+th)h/t \geq \sigma \|h\|^2$ \cite[\citeChapter~IV]{HiriartUrrutyLemarechal93i}. Using \ref{e:thm:sublevelset:sufficiency} the proof is finished.\\
One can also show that for any particular $\omega > 0$, Corollary
\ref{thm:sublevelset:g} provides a bound on $r$ that is better than the bound \ref{e:Journee:radius}, which has been obtained in \cite{JourneeNesterovRichtarikSepulchre08}. 
\end{example}
\subsection{Proof of the main results}
\label{ss:proofs}
In the present subsection we prove Theorems \ref{thm:sublevelset:sufficiency:convex} and \ref{thm:sublevelset:sufficiency}. We need several auxiliary results, to be presented in Sections \ref{sss:qsupp}--\ref{sss:topology}, and the proofs of the main results are completed in Section \ref{sss:proofs}. 
\subsubsection{The quadratic support property}
\label{sss:qsupp}
In the proof of Theorem \ref{thm:sublevelset:sufficiency} we will make use of the \begriff{local quadratic support}, which we introduce below. (See also \ref{fig:QuadSupport}.) It is a local characterization of strong convexity. We will need the following theorem of \person{Mayer} \cite{Mayer35}, which is also a local characterization.
\begin{theorem}
\label{thm:mayer}
An open and connected set $\Omega\subseteq \R^n$ is $r$-convex if and only if it is spherically supported at each of its boundary points locally, by which we mean that for any $x \in \boundary \Omega$ there exists a neighborhood $U\subseteq \R^n$ of $x$ and some $v \in \R^n$, $\|v\| = 1$, such that $U \cap \Omega \subseteq \cBall(x-rv,r)$.
\end{theorem}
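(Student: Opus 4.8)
The statement is an equivalence, and I would treat its two implications very differently: the forward one is a short consequence of the global characterization already available, while the converse carries the genuine content and is a local-to-global argument. For the forward implication, assume $\Omega$ is $r$-convex. I would first note that $\closure\Omega$ is again closed and $r$-convex; this is routine, since each lens $\bigcap\cBall(c,r)$ is closed and points of $\closure\Omega$ may be approximated from within $\Omega$, the defining balls varying continuously with their two prescribed points. Because $\Omega$ is open and nonempty and $r$-convexity implies ordinary convexity, $\closure\Omega$ is a convex body with $\interior\closure\Omega=\Omega$, so $\boundary\closure\Omega=\boundary\Omega$ and at every $x\in\boundary\Omega$ the supporting hyperplane theorem furnishes at least one unit normal $v$. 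Proposition \ref{p:globalchar}, applied to $\closure\Omega$, then gives $\closure\Omega\subseteq\cBall(x-rv,r)$; taking $U=\R^n$ yields exactly the asserted local spherical support.

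For the converse, assume $\Omega$ is open, connected and locally spherically supported at each boundary point, and write $L(x,y)$ for the lens of $x,y$ (the intersection of all $\cBall(c,r)$ containing both); the goal is $L(x,y)\subseteq\Omega$ for all $x,y\in\Omega$. I would first dispose of the degenerate case: if $\|x-y\|>2r$ for some admissible pair, then no radius-$r$ ball contains both, so $L(x,y)=\R^n$ and necessarily $\Omega=\R^n$, where the claim is trivial; hence I may assume $\|x-y\|\le 2r$. The plan is a connectedness argument. Since $\Omega$ is open and connected it is path-connected; choosing a path $\gamma\colon[0,1]\to\Omega$ from $x$ to $y$, I would set
\[
P=\{\, t\in[0,1] \mid L(x,\gamma(t))\subseteq\Omega \,\},
\]
observe $0\in P$ because $L(x,x)=\{x\}$, and show that $P$ is both open and closed, so that $P=[0,1]$ and $1\in P$ gives $L(x,y)\subseteq\Omega$.

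Openness of $P$ is the easy half: when $\|x-\gamma(t)\|<2r$ the lens depends continuously (in the Hausdorff metric) on the second tip, and a compact lens contained in the open set $\Omega$ has positive distance to $\R^n\setminus\Omega$, so the inclusion persists under small perturbations of $t$. Closedness is the crux. Passing to a limit in $L(x,\gamma(t_k))\subseteq\Omega$ only yields $L(x,\gamma(t^\ast))\subseteq\closure\Omega$, and one must exclude that the limiting lens meets $\boundary\Omega$. Supposing $z\in L(x,\gamma(t^\ast))\cap\boundary\Omega$, I would invoke the local spherical support at $z$, say $U_z\cap\Omega\subseteq\cBall(z-rw,r)=:B_z$. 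If one can show that $B_z$ already contains the two tips $x$ and $\gamma(t^\ast)$ — equivalently that $w$ points out of the lens at $z$ — then $B_z$ is one of the balls defining $L(x,\gamma(t^\ast))$, so $L(x,\gamma(t^\ast))\subseteq B_z$ with $z\in\boundary B_z$; a relative neighborhood of $z$ in the lens would then lie in $\Omega$, contradicting $z\in\boundary\Omega$.

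The main obstacle is precisely this last geometric step: relating the \emph{local} supporting direction $w$ at the touching point $z$ to the \emph{global} tips $x,\gamma(t^\ast)$. The honest difficulty is that the external ball condition is assumed only locally, so establishing $x,\gamma(t^\ast)\in B_z$ forces one to promote local support to a half-space statement $\innerProd{w}{y-z}\le 0$ valid for all $y\in\Omega$ — that is, to prove that $\Omega$ is convex, a Tietze--Nakajima-type local-to-global phenomenon driven by the external ball condition together with connectedness. I would therefore expect to isolate, as the key lemma, that an open connected set locally supported by radius-$r$ balls is convex, after which the touching-point argument above closes the proof; alternatively, once convexity is in hand one may bypass the path argument entirely and verify the two conditions of Proposition \ref{p:globalchar}\ref{p:globalchar:ii} for $\closure\Omega$ directly, the global ball support again being the delicate point.
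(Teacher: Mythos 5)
First, a point of comparison: the paper does not prove this theorem at all. It is quoted as a classical result of \person{Mayer}, with \cite[\citeTheorem~1.2]{i12sConvex} cited for a proof, so there is no internal argument to measure yours against; the question is whether your proposal stands on its own. Its forward half does: passing to $\closure\Omega$ (which is again $r$-convex), using $\interior\closure\Omega=\Omega$ and $\boundary\closure\Omega=\boundary\Omega$ for open convex sets, and invoking Proposition \ref{p:globalchar}\ref{p:globalchar:ii} with $U=\R^n$ is a correct, routine deduction from the global characterization.

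The converse, however, is where all of the content of Mayer's theorem lives, and your proposal stops exactly there. In the open-closed argument along a path, closedness of $P$ requires showing that the \emph{local} supporting direction $w$ at a touching point $z$ of the limiting lens yields a ball containing both tips; you concede that this forces one to promote local ball support to the global statement $\innerProd{w}{y-z}\le 0$ for all $y\in\Omega$, i.e.\ to prove that $\Omega$ is convex, and you then ``expect to isolate'' this as a key lemma without proving it. That lemma --- an open connected set locally supported by radius-$r$ balls is convex --- is a Tietze--Nakajima-type local-to-global statement that is essentially as hard as the theorem itself, so the proposal is a reduction of the theorem to an unproven statement carrying all of its difficulty, not a proof. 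Two further steps would also fail as written: you dispose of pairs with $\|x-y\|>2r$ by asserting ``necessarily $\Omega=\R^n$,'' but the implication ``two points of $\Omega$ at distance greater than $2r$ imply $\Omega=\R^n$'' is a consequence of $r$-convexity, i.e.\ of the conclusion, and does not follow from local spherical support without the same local-to-global machinery; and openness of $P$ is not ``the easy half'' at parameters with $\|x-\gamma(t)\|=2r$, where the lens jumps discontinuously from a single ball to all of $\R^n$. A complete proof must be supplied from the cited literature (\cite{Mayer35}, \cite[\citeTheorem~1.2]{i12sConvex}); your outline cannot replace it.
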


See also \cite[\citeTheorem~1.2]{i12sConvex} for a proof of Theorem \ref{thm:mayer}.\\
The idea of local quadratic support is to replace a supporting ball at $x \in \boundary \Omega$ by its second-order parabolic approximation at $x$. 
\begin{definition}
\label{def:LocalQuadSupport}
Let $r > 0$ and $\Omega \subseteq \mathbb{R}^n$.
The vector $v \in \mathbb{R}^n$ \begriff{quadratically supports $\Omega$ with radius $r$ at $x \in \boundary\Omega$ locally} if $\| v \| = 1$ and there exists a neighborhood
$U \subseteq \mathbb{R}^n$ of $x$ such that
\begin{equation}
\label{e:def:QuadSupport}
\| h \|^2 \leq 2 r \mu
\end{equation}
whenever $h \in \mathbb{R}^n$, $h \perp v$, $\mu \in \mathbb{R}$ and
$x + h - \mu v \in U \cap \Omega$.
\\$\Omega$ is \begriff{quadratically supported with radius $r$ at
$x \in \boundary \Omega$ locally} if the previous condition is
satisfied for some $v$.
\end{definition}

\begin{figure}
\begin{center}
  \psfrag{v}[r][r]{$v$}
  \psfrag{x}[b][b]{$x$}
  \psfrag{h}[t][t]{$h$}
  \psfrag{U}[][]{$U$}
  \psfrag{O}[t][t]{$\Omega$}
  \psfrag{q}[r][r]{$-\mu v$}
  \psfrag{p}[r][r]{$~$}
  \psfrag{z}[l][l]{$x+h-\frac{v}{2r}\|h\|^2$}
  \psfrag{O}[t][t]{$\Omega$}
  \psfrag{C}[][]{$U \cap \Omega$}
  \includegraphics[width=.6\linewidth]{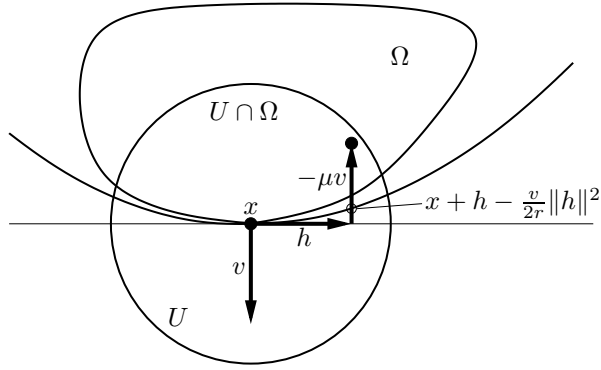}
\end{center}
\caption{\label{fig:QuadSupport}
Illustration of the property of local quadratic support \cite{i13qsuppc}. See
Def.~\ref{def:LocalQuadSupport}.}
\vspace*{-\baselineskip}%
\end{figure}
Although our notion is point-wise not equivalent to the spherical support property, it becomes equivalent when assumed at all boundary points:
\begin{theorem}
\label{th:rConvQuadSupp}
Let $r > 0$ and $\Omega \subseteq \mathbb{R}^n$ be open and
connected. Then the following conditions are equivalent:
\begin{enumerate}
\item
\label{th:rConvQuadSupp:rConv}
$\Omega$ is $r$-convex.
\item
\label{th:rConvQuadSupp:QuadSuppAnyNormal}
$\Omega$ is convex, and for all $x \in \boundary \Omega$, every unit
normal $v$ to $\Omega$ at $x$ quadratically supports $\Omega$ with
radius $r$ at $x$ locally.
\item
\label{th:rConvQuadSupp:QuadSupp}
$\Omega$ is quadratically supported with radius $r$ at each of its
boundary points locally.
\end{enumerate}
\end{theorem}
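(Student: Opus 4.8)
The plan is to prove the theorem by the cycle of implications
$(\ref{th:rConvQuadSupp:rConv}) \Rightarrow (\ref{th:rConvQuadSupp:QuadSuppAnyNormal}) \Rightarrow (\ref{th:rConvQuadSupp:QuadSupp}) \Rightarrow (\ref{th:rConvQuadSupp:rConv})$. The computational heart of everything is the elementary link between spherical and quadratic support: for a unit vector $v$, $h \perp v$ and $\mu \in \R$, the point $x + h - \mu v$ lies in $\cBall(x - rv,r)$ if and only if $\|h\|^2 \leq 2r\mu - \mu^2$, whereas the defining inequality \ref{e:def:QuadSupport} of local quadratic support reads $\|h\|^2 \leq 2r\mu$. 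Since $2r\mu - \mu^2 \leq 2r\mu$, containment in the ball is strictly stronger than the quadratic-support inequality; in particular local spherical support with radius $r$ always entails local quadratic support with radius $r$. This one observation does most of the bookkeeping.

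For $(\ref{th:rConvQuadSupp:rConv}) \Rightarrow (\ref{th:rConvQuadSupp:QuadSuppAnyNormal})$ I would pass to the closure. The set $\closure\Omega$ is closed and, as is readily verified from Definition \ref{def:StrongConvexity} by approximation, again $r$-convex, while the normals to $\Omega$ and to $\closure\Omega$ at a boundary point coincide by continuity. Proposition \ref{p:globalchar} applied to $\closure\Omega$ then yields at once that $\Omega$ is convex and that $\Omega \subseteq \cBall(x - rv,r)$ for every $x \in \boundary\Omega$ and every unit normal $v$; taking $U = \R^n$, the computation above upgrades this global ball containment into quadratic support by \emph{every} unit normal, which is precisely (\ref{th:rConvQuadSupp:QuadSuppAnyNormal}). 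The step $(\ref{th:rConvQuadSupp:QuadSuppAnyNormal}) \Rightarrow (\ref{th:rConvQuadSupp:QuadSupp})$ is then immediate: a nonempty proper open convex set has a supporting hyperplane, hence a unit normal, at each of its boundary points, and that normal quadratically supports $\Omega$ by hypothesis. (The degenerate cases $\Omega = \emptyset$ and $\Omega = \R^n$ have no boundary points and are trivial.)

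The substance lies in $(\ref{th:rConvQuadSupp:QuadSupp}) \Rightarrow (\ref{th:rConvQuadSupp:rConv})$. Here I would invoke \person{Mayer}'s Theorem \ref{thm:mayer}: since $\Omega$ is open and connected, it suffices to produce at each $x \in \boundary\Omega$ a neighborhood $U'$ and a unit vector $v$ with $U' \cap \Omega \subseteq \cBall(x - rv,r)$, that is, to upgrade the quadratic support furnished by (\ref{th:rConvQuadSupp:QuadSupp}) to genuine spherical support \emph{with the same radius}. This upgrade is the main obstacle, and the computation above already shows why it cannot be purely pointwise: the quadratic-support region (the osculating paraboloid) strictly contains the ball, so near $x$ the set could a priori protrude from $\cBall(x - rv,r)$ into the thin sliver between paraboloid and sphere while still respecting quadratic support at $x$ alone. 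The resolution must exploit quadratic support at \emph{all} boundary points at once, the hypothesis being a uniform one-sided curvature bound (curvature at least $1/r$ in every normal direction), which integrates up to ball containment by a rolling-ball comparison.

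Concretely, I would argue by contradiction. Assuming spherical support fails, the nested family $\cBall(x - \rho v,\rho)$, $\rho \geq r$ (all tangent to the same hyperplane at $x$ and increasing with $\rho$ toward the tangent half-space), contains $\Omega$ locally only for some minimal radius $R > r$, and the corresponding sphere is internally tangent to $\boundary\Omega$ at a point $p$ bounded away from $x$. One then plays the quadratic support at $p$ and along $\boundary\Omega$ against the fact that a boundary of curvature at least $1/r$ cannot remain inside a strictly larger tangent sphere of radius $R$. Making this last comparison rigorous is delicate precisely because $\boundary\Omega$ is merely the topological boundary of an open set and need not be a smooth hypersurface, so the argument must be carried out in the robust contact-geometric form just sketched rather than through an ODE on a boundary parametrization; I expect this to be the technical core of the whole theorem. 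Once local spherical support is secured at every boundary point, Theorem \ref{thm:mayer} closes the cycle and delivers $r$-convexity.
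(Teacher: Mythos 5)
Your first two implications are sound and essentially coincide with the paper's: the ball-versus-paraboloid computation you put at the center is exactly how the paper proves \ref{th:rConvQuadSupp:rConv}$\Rightarrow$\ref{th:rConvQuadSupp:QuadSuppAnyNormal}, and \ref{th:rConvQuadSupp:QuadSuppAnyNormal}$\Rightarrow$\ref{th:rConvQuadSupp:QuadSupp} is indeed just the existence of a supporting hyperplane. The genuine gap is in \ref{th:rConvQuadSupp:QuadSupp}$\Rightarrow$\ref{th:rConvQuadSupp:rConv}: what you offer there is a program, not a proof. You insist on upgrading quadratic support to spherical support \emph{with the same radius} $r$, and you propose to do it by a minimal-radius/internal-tangency/rolling-ball comparison which you yourself flag as delicate and never carry out. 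That comparison would be genuinely hard to make rigorous: $\boundary\Omega$ is only a topological boundary, the "point of internal tangency $p$" need not exist or be unique, and there is no curvature function to integrate. Moreover your diagnosis that the difficulty is inherently global is misplaced; the difficulty is only in keeping the radius fixed.

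The idea you are missing is a radius relaxation. Quadratic support with radius $r$ at $x$ (with unit vector $v$) \emph{does} imply local spherical support at $x$ with every radius $s>r$, by a one-line computation: if $x+h-\mu v\in U\cap\Omega$ with $h\perp v$, then $\|h\|^2\le 2r\mu$ forces $\mu\ge 0$, and
\[
\|x+h-\mu v-(x-sv)\|^2=\|h\|^2+(s-\mu)^2\le s^2+\mu\bigl(2(r-s)+\mu\bigr)\le s^2
\]
as soon as $\mu\le 2(s-r)$, which is guaranteed after shrinking $U$ (since $\mu=-\innerProd{v}{y-x}\le\|y-x\|$ for $y\in U\cap\Omega$). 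The paper runs this same computation as a contradiction with a sequence $x_k\to x$, but the content is identical. Mayer's Theorem \ref{thm:mayer} then yields that $\Omega$ is $s$-convex for \emph{every} $s>r$, and Lemma \ref{lem:rconv:sequence} --- stated in the paper immediately after the theorem precisely for this purpose --- lets $s\downarrow r$ to conclude that $\Omega$ is $r$-convex. So the pointwise loss of radius is harmless, no contact-geometric argument is needed, and what you called the technical core of the whole theorem dissolves into an elementary estimate plus a limit.
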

\begin{proof}
\newcommand{\orthogonal}[2]{\innerProd{#1}{#2}=0}
\newcommand{\raum}{\R^n}
We may assume $\Omega \neq \R^n$ without loss of generality.
Let $\Omega$ be $r$-convex and $x \in \partial \Omega$. 
Then $\Omega$ is convex, and $\Omega$ is spherically supported with radius $r$ at each of its boundary points by Proposition \ref{p:globalchar}. Let $v \in \R^n$ be a unit normal to $\Omega$ at $x$. Observe that $h \in \mathbb{R}^n$ 
such that $\orthogonal{h}{v}$ and $x+h-\mu v \in \Omega$ implies 
\begin{align*}
r^2 \geq \| x+h-\mu v - (x-rv)\|^2 = \|h\|^2+(\mu-r)^2\geq \|h\|^2 -2\mu r+r^2,
\end{align*}
hence \ref{e:def:QuadSupport} as claimed in \ref{th:rConvQuadSupp:QuadSuppAnyNormal}. Implication \ref{th:rConvQuadSupp:QuadSuppAnyNormal}$\Rightarrow$\ref{th:rConvQuadSupp:QuadSupp} is trivial, so it is left to prove \ref{th:rConvQuadSupp:QuadSupp}$\Rightarrow$\ref{th:rConvQuadSupp:rConv}. Let $s>r$ and suppose there exists a sequence $\sequence{x}{\Omega}{~}$ 
converging to $x$ such that $s^2 < \|x_k-(x-sv)\|^2$ and $x_k \neq x$ for all $k \in \mathbb{N}$. $\Omega$ is quadratically supported with radius $r$ at $x$ locally. Thus, there exists a sequence $(h,\mu)\colon \mathbb{N} \to \R^n \times \R$ and a unit normal $v \in \raum$ 
such that $\orthogonal{v}{h_k}$, $x_k =x+h_k -\mu_k v$ and $\|h_k\|^2 \leq 2r\mu_k$ for all $k$. 
We conclude $$s^2 < \|h_k  - \mu_k v + sv\|^2 \leq 2r\mu_k + (s-\mu_k)^2,$$
hence $0< 2(r-s) + \mu_k$ for any $k$, which is a contradiction as $\mu_k \to 0$. So, $\Omega$ is spherically supported with radius $s$ at $x$ locally. The proof is completed by Theorem \ref{thm:mayer} and the subsequent lemma, which is easily established. 
\end{proof}
\begin{lemma}
\label{lem:rconv:sequence}
Let $\Omega \subseteq \mathbb{R}^n$ be closed or open and let
$(r_i)_{i \in \mathbb{N}}$ be a sequence of reals converging to
$\rho > 0$.
If $\Omega$ is $r_i$-convex for all $i \in \mathbb{N}$, then $\Omega$
is $\rho$-convex.
\end{lemma}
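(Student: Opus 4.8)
The plan is to argue directly from Definition~\ref{def:StrongConvexity}, rephrased through the \emph{lens} of a pair of points. For $x,y\in\R^n$ and $r>0$ set $C_r(x,y)=\cBall(x,r)\cap\cBall(y,r)$ and $L_r(x,y)=\bigcap_{c\in C_r(x,y)}\cBall(c,r)$, with the convention $L_r(x,y)=\R^n$ when $C_r(x,y)=\emptyset$; thus $\Omega$ is $r$-convex if and only if $L_r(x,y)\subseteq\Omega$ for all $x,y\in\Omega$. It therefore suffices to fix $x,y\in\Omega$ and $z\in L_\rho(x,y)$ and to prove $z\in\Omega$, and for this I may pass to a subsequence and assume $(r_i)$ monotone. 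Two degenerate cases are cleared first: if $x=y$ then $L_\rho(x,y)=\{x\}\subseteq\Omega$, while if $\|x-y\|>2\rho$ then $\|x-y\|>2r_i$ for large $i$, so $L_{r_i}(x,y)=\R^n\subseteq\Omega$ and $\Omega=\R^n$. Hence I may assume $0<\|x-y\|\le2\rho$, so that $L_\rho(x,y)$ is a compact convex set whose interior contains the midpoint $M=(x+y)/2$ (a strict triangle inequality, valid because $x\neq y$).

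If $r_i\le\rho$ for some $i$, I would finish via the monotonicity $L_\rho(x,y)\subseteq L_r(x,y)$ for $r\le\rho$ (trivial when $C_r(x,y)=\emptyset$), which gives $L_\rho(x,y)\subseteq L_{r_i}(x,y)\subseteq\Omega$. To see the inclusion, take $c\in C_r(x,y)$ with $c\neq z$ and inflate the ball $\cBall(c,r)$ by pushing its centre away from $z$: put $c'=c+(\rho-r)(c-z)/\|c-z\|$. Then $\|z-c'\|=\|z-c\|+(\rho-r)$, so $z\in\cBall(c',\rho)$ exactly when $z\in\cBall(c,r)$, whereas $\|x-c'\|\le\|x-c\|+(\rho-r)\le\rho$ and similarly for $y$, so $c'\in C_\rho(x,y)$. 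As $z\in L_\rho(x,y)\subseteq\cBall(c',\rho)$, this yields $z\in\cBall(c,r)$; since $c\in C_r(x,y)$ was arbitrary, $z\in L_r(x,y)$.

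The essential regime is $r_i>\rho$ for all $i$ with $r_i\downarrow\rho$; now the lenses shrink, $L_{r_i}(x,y)\subseteq L_\rho(x,y)$, and a limiting argument is required. I would first show $\interior L_\rho(x,y)\subseteq\Omega$: if $z\in\interior L_\rho(x,y)$ then $\max_{c\in C_\rho(x,y)}\|z-c\|<\rho$, and since $C_{r_i}(x,y)$ decreases to $C_\rho(x,y)$ while $r_i\downarrow\rho$, the maximum $\max_{c\in C_{r_i}(x,y)}\|z-c\|$ converges to that value and so lies below $r_i$ for large $i$; hence $z\in L_{r_i}(x,y)\subseteq\Omega$. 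If $\Omega$ is closed this already completes the proof, because the convex body $L_\rho(x,y)$ is the closure of its interior, which is contained in the closed set $\Omega$.

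The real difficulty is the open case, where the closure step is unavailable and the boundary points of the lens must also be shown to lie in $\Omega$. Here I would use that $M\in\interior L_\rho(x,y)$: for $z\in\boundary L_\rho(x,y)$ and small $\epsilon>0$, convexity gives $z_\epsilon:=(1-\epsilon)z+\epsilon M\in\interior L_\rho(x,y)$. Translating by $v=\epsilon(z-M)$ and using $L_\rho(x+v,y+v)=L_\rho(x,y)+v$, I obtain $z=z_\epsilon+v\in\interior L_\rho(x+v,y+v)$; and since $\Omega$ is open with $x,y\in\Omega$, the points $x+v,y+v$ lie in $\Omega$ once $\epsilon$ is small. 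Applying the interior inclusion already proved, now to the pair $x+v,y+v$, gives $z\in\Omega$. I expect this open-set boundary argument—and, more generally, keeping straight that the sequence may approach $\rho$ from above (so that the lenses shrink rather than grow)—to be the crux; the remaining computations are routine.
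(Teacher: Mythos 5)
Your proof is correct, but there is nothing in the paper to compare it against: the authors dispose of Lemma \ref{lem:rconv:sequence} with the single phrase ``which is easily established'' and give no argument at all. Your write-up therefore supplies a proof the paper omits, and the key steps all check out: the monotonicity $L_\rho(x,y)\subseteq L_r(x,y)$ for $r\le\rho$ (pushing the centre $c$ radially away from $z$ is exactly the right construction); the characterization $\max_{c\in C_\rho(x,y)}\|z-c\|<\rho$ of interior points of the lens; the convergence $\max_{c\in C_{r_i}(x,y)}\|z-c\|\to\max_{c\in C_\rho(x,y)}\|z-c\|$, which follows by compactness since any limit of maximizers lies in $\bigcap_i C_{r_i}(x,y)=C_\rho(x,y)$; and the translation trick that upgrades $\interior L_\rho(x,y)\subseteq\Omega$ to $L_\rho(x,y)\subseteq\Omega$ when $\Omega$ is open. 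You are also right about where the difficulty sits: since the lenses $L_{r_i}(x,y)$ with $r_i>\rho$ only exhaust the \emph{interior} of $L_\rho(x,y)$ (they need not cover its boundary, e.g.\ when $\|x-y\|=2\rho$), some boundary device is unavoidable, and the paper's claim of easiness rather understates this.

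One remark on economy: for \emph{closed} $\Omega$ there is a shorter route through Proposition \ref{p:globalchar}. Each $r_i$-convexity gives $\Omega$ convex and $\Omega\subseteq\cBall(x-r_iv,r_i)$ for every $x\in\boundary\Omega$ and every unit normal $v$ to $\Omega$ at $x$; rewriting this inclusion as $\|y-x\|^2\le 2r_i\innerProd{v}{x-y}$ for all $y\in\Omega$ and letting $r_i\to\rho$ yields $\Omega\subseteq\cBall(x-\rho v,\rho)$, hence $\rho$-convexity. But that proposition is stated only for closed sets, so for open $\Omega$ a limiting argument of your kind is genuinely needed; your version handles both cases uniformly.
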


\myvspace
A useful consequence of Theorem \ref{th:rConvQuadSupp} and Lemma \ref{lem:rconv:sequence} is the following.
\begin{corollary}
\label{cor:qsupp:clintOmega}
Let $r>0$ and $\Omega \subseteq \R^n$ be closed such that $\Omega = \closure( \interior \Omega )$ and $\interior \Omega$ is connected. Then $\Omega$ is $r$-convex if $\Omega$ is quadratically supported with radius $s$ at each of its boundary points locally for any $s>r$.
\end{corollary}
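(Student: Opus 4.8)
The plan is to transfer the hypotheses to the open set $\interior\Omega$, invoke the equivalence in Theorem \ref{th:rConvQuadSupp} there, and then recover $r$-convexity of $\Omega$ itself by a closure argument. First I would record the elementary topological fact that, since $\Omega$ is closed with $\Omega=\closure(\interior\Omega)$, the two boundaries coincide: $\boundary(\interior\Omega)=\closure(\interior\Omega)\setminus\interior(\interior\Omega)=\Omega\setminus\interior\Omega=\boundary\Omega$. Thus every boundary point of $\interior\Omega$ is a boundary point of $\Omega$, where by hypothesis a local quadratic support of radius $s$ is available for each $s>r$.

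Next I would observe that the quadratic support property is inherited in the obvious direction by the subset $\interior\Omega$. Fix $s>r$ and $x\in\boundary(\interior\Omega)=\boundary\Omega$, and let $v$ and $U$ be the unit vector and neighborhood witnessing that $v$ quadratically supports $\Omega$ with radius $s$ at $x$ locally. Since $U\cap\interior\Omega\subseteq U\cap\Omega$, any pair $(h,\mu)$ with $h\perp v$ and $x+h-\mu v\in U\cap\interior\Omega$ also satisfies $x+h-\mu v\in U\cap\Omega$, so the defining inequality $\|h\|^2\le 2s\mu$ of Definition \ref{def:LocalQuadSupport} holds. Hence the very same $v$ and $U$ show that $\interior\Omega$ is quadratically supported with radius $s$ at $x$ locally. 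As $\interior\Omega$ is open and connected, Theorem \ref{th:rConvQuadSupp} (implication \ref{th:rConvQuadSupp:QuadSupp}$\Rightarrow$\ref{th:rConvQuadSupp:rConv}) then yields that $\interior\Omega$ is $s$-convex, for every $s>r$. Choosing $s_i=r+1/i\to r$ and applying Lemma \ref{lem:rconv:sequence} to the open set $\interior\Omega$ gives that $\interior\Omega$ is $r$-convex.

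The one remaining step, and the part that requires a little care, is to pass from $r$-convexity of the open set $\interior\Omega$ to that of its closure $\Omega$. I would route this through the reformulation of Lemma \ref{l:equivalence:rConv:epsregular}, by which $\interior\Omega$ is $\sigma$-regular with $\sigma=1/(2r)$. The advantage of the $\sigma$-regular formulation (Definition \ref{def:plis}) is that the enclosing ball $\cBall(\alpha x+(1-\alpha)y,\sigma\alpha(1-\alpha)\|x-y\|^2)$ has center and radius depending continuously on the pair $(x,y)$, which makes $\sigma$-regularity stable under closure. Concretely, given $x,y\in\Omega=\closure(\interior\Omega)$ and $\alpha\in\intoo{0,1}$, I would approximate by $x_k,y_k\in\interior\Omega$ with $x_k\to x$, $y_k\to y$; $\sigma$-regularity of $\interior\Omega$ places each $\cBall(c_k,\rho_k)$ in $\interior\Omega$, where $c_k=\alpha x_k+(1-\alpha)y_k$ and $\rho_k=\sigma\alpha(1-\alpha)\|x_k-y_k\|^2$. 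Since $c_k\to c:=\alpha x+(1-\alpha)y$ and $\rho_k\to\rho:=\sigma\alpha(1-\alpha)\|x-y\|^2$, any point $w$ with $\|w-c\|<\rho$ satisfies $\|w-c_k\|<\rho_k$ for large $k$, hence $w\in\interior\Omega$; taking closures of balls yields $\cBall(c,\rho)\subseteq\closure(\interior\Omega)=\Omega$. Thus $\Omega$ is $\sigma$-regular, and Lemma \ref{l:equivalence:rConv:epsregular} returns that $\Omega$ is $r$-convex.

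The main obstacle is precisely this closure step: $r$-convexity is defined for arbitrary sets, but the local-to-global machinery at our disposal (Theorem \ref{th:rConvQuadSupp}, and Proposition \ref{p:globalchar}) applies only to open or closed sets, so after working on $\interior\Omega$ one must re-ascend to $\Omega$. Rephrasing $r$-convexity as $\sigma$-regularity via Lemma \ref{l:equivalence:rConv:epsregular} is what makes the required limit routine, since the approximating balls vary continuously with their endpoints rather than through an implicitly defined lens.
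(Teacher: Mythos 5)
Your proof is correct, and its core is exactly the route the paper intends: the corollary is stated there without proof, as a consequence of Theorem \ref{th:rConvQuadSupp} and Lemma \ref{lem:rconv:sequence}, which is precisely your first two steps --- identify $\boundary(\interior\Omega)=\boundary\Omega$, observe that local quadratic support is inherited by the subset $\interior\Omega$, apply the implication \ref{th:rConvQuadSupp:QuadSupp}$\Rightarrow$\ref{th:rConvQuadSupp:rConv} to the open connected set $\interior\Omega$ for each $s>r$, and let $s\to r$ via Lemma \ref{lem:rconv:sequence}. What you add, and what the paper leaves entirely implicit, is the re-ascent from $r$-convexity of $\interior\Omega$ to $r$-convexity of $\Omega=\closure(\interior\Omega)$; your detour through $\sigma$-regularity is a clean way to do this, because it replaces the implicitly defined lenses by single balls whose centers and radii depend continuously on the endpoints, so the inclusion survives passage to limits (the alternative --- proving directly that the closure of a strongly convex set is strongly convex with the same radius --- would require a Hausdorff-continuity argument for lenses, which is more delicate near the degenerate case $\|x-y\|=2r$). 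One technical caveat: Lemma \ref{l:equivalence:rConv:epsregular} is stated for \emph{closed} sets, whereas you apply its forward direction ($r$-convex $\Rightarrow$ $\sigma$-regular) to the open set $\interior\Omega$. This is harmless, since that direction's proof is pure ball geometry --- it shows $\cBall(z,\rho)\subseteq\cBall(c,r)$ for every ball of radius $r$ containing $x$ and $y$, and never invokes closedness --- but your write-up should note this explicitly, as the lemma as stated does not license the application.
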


\myvspace
We proceed with relating the property of local quadratic support to optimality. For this purpose we will need a second-order sufficient condition in constrained optimization with $C^{1,1}$-data: Let us consider the optimization problem
\begin{equation}
\label{e:optimizationproblem}
\tag{OP}
\min_{x \in \Omega} f(x)
\end{equation}
and assume the following.
\begin{hypothesis}
\label{h:op}
$U \subseteq \R^n$ is open, and in \ref{e:optimizationproblem}, $f \colon U \to \R$ is of class $C^{1,1}$, $\Omega$ is of the form \ref{e:sublevelset} with $g_1,\ldots,g_m \colon U \to \R$ being of class $C^{1,1}$. 
\end{hypothesis}

As usually, we say that $x \in \R^n$ is a \begriff{local minimum point} 
\begriff{of} \ref{e:optimizationproblem} if $x \in \Omega$ and there exists a neighborhood $V \subseteq \R^n$ of $x$ such that $f(y)\geq f(x)$ 
whenever $y \in V\cap \Omega \setminus \{x \}$. 

\myvspace
A first order sufficient condition for optimality with constraints is as follows.
{
\newcommand{\xmin}{x_0}
\begin{theorem}
\label{t:sosc}
Assume \reftohypothesis{\ref{h:op}}. Let $(\xmin,\lambda) \in \Omega \times \R^m$ be such that for the function $L_\lambda \colon U \to \R$ given by $L_\lambda(x) = f(x) + \sum_{i = 1}^m \lambda_i g_i(x)$ it holds that $L_\lambda '(\xmin) = 0$, $\lambda_i \geq 0$ and $\lambda_i g_i(\xmin) = 0$ for all $i\in \{1,\ldots,m\}$. Then $\xmin$ is a local minimum point of \ref{e:optimizationproblem} if 
\begin{equation}
\label{e:t:sosc:condition}
\liminf_{t\to 0,t>0} \textfrac{L_\lambda '(\xmin +th)h}{t} > 0
\end{equation}
for all $h \in \mathcal{C}(\xmin) \cap \bigcap_{i : \lambda_i > 0} \ker g_i'(\xmin) \setminus \{0 \}$, where the intersection is formed over the indices $i\in \{1,\ldots,m\}$ that satisfy $\lambda_i > 0$.
\end{theorem}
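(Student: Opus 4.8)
The plan is to argue by contradiction. Suppose \ref{e:t:sosc:condition} holds but $x_0$ is not a local minimum point of \ref{e:optimizationproblem}. Then one can choose a sequence $(x_k)$ in $\Omega$ with $x_k \to x_0$, $x_k \neq x_0$ and $f(x_k) < f(x_0)$ for all $k$. Setting $t_k = \|x_k - x_0\| \to 0$ and $h_k = (x_k - x_0)/t_k$, the vectors $h_k$ lie on the unit sphere, so after passing to a subsequence we may assume $h_k \to h$ with $\|h\| = 1$; in particular $h \neq 0$. The strategy is to show that $h$ belongs to the critical cone $\mathcal{C}(x_0) \cap \bigcap_{i:\lambda_i > 0}\ker g_i'(x_0)$, so that \ref{e:t:sosc:condition} applies to $h$ and forces $L_\lambda$ to grow quadratically along the segments joining $x_0$ to $x_k$, contradicting $f(x_k) < f(x_0)$.

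First I would establish that $h$ lies in this critical cone. For each active index $j \in \mathcal{A}(x_0)$ we have $g_j(x_0) = 0$ and $g_j(x_k) \leq 0$, so dividing $g_j(x_k) - g_j(x_0) = t_k\, g_j'(x_0)h_k + o(t_k)$ by $t_k$ and passing to the limit gives $g_j'(x_0)h \leq 0$; hence $h \in \mathcal{C}(x_0)$. The same first-order expansion applied to $f$, together with $f(x_k) < f(x_0)$, yields $f'(x_0)h \leq 0$. Now I would invoke the stationarity $L_\lambda'(x_0) = 0$, i.e. $f'(x_0) = -\sum_i \lambda_i g_i'(x_0)$. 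For any $i$ with $\lambda_i > 0$, complementary slackness forces $g_i(x_0) = 0$, hence $i \in \mathcal{A}(x_0)$ and $g_i'(x_0)h \leq 0$ by the previous step. Consequently $0 \geq f'(x_0)h = \sum_{i:\lambda_i>0}\lambda_i(-g_i'(x_0)h) \geq 0$, so every summand vanishes; since $\lambda_i > 0$ this gives $g_i'(x_0)h = 0$ for all such $i$. Thus $h$ lies in the required critical cone and is nonzero, so \ref{e:t:sosc:condition} provides $\gamma := \liminf_{t\to 0^+} L_\lambda'(x_0 + th)h/t > 0$.

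Next I would exploit the Lagrangian. Using $\lambda_i \geq 0$, $g_i(x_k) \leq 0$ and $\lambda_i g_i(x_0) = 0$, one has $L_\lambda(x_k) \leq f(x_k) < f(x_0) = L_\lambda(x_0)$, so $L_\lambda(x_k) - L_\lambda(x_0) < 0$. Since $L_\lambda$ is $C^{1,1}$ with $L_\lambda'(x_0) = 0$, writing the increment as $L_\lambda(x_k) - L_\lambda(x_0) = \int_0^{t_k} L_\lambda'(x_0 + sh_k)h_k\, ds$ reduces the task to bounding the integrand from below. Let $M$ be a Lipschitz constant for $L_\lambda'$ near $x_0$. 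Splitting $L_\lambda'(x_0 + sh_k)h_k = L_\lambda'(x_0 + sh)h + \bigl(L_\lambda'(x_0+sh_k) - L_\lambda'(x_0+sh)\bigr)h + L_\lambda'(x_0+sh_k)(h_k - h)$ and bounding the last two terms by $Ms\|h_k - h\|$ each (using $L_\lambda'(x_0) = 0$ and $\|h_k\| = 1$), I obtain $L_\lambda'(x_0+sh_k)h_k \geq L_\lambda'(x_0+sh)h - 2Ms\|h_k - h\|$. By definition of $\gamma$ there is $\delta > 0$ with $L_\lambda'(x_0+sh)h \geq (\gamma/2)s$ for $0 < s < \delta$; then for $k$ large enough that $\|h_k - h\| < \gamma/(8M)$ and $t_k < \delta$ the integrand exceeds $(\gamma/4)s$ for all $s \in (0,t_k)$, whence $L_\lambda(x_k) - L_\lambda(x_0) \geq (\gamma/8)t_k^2 > 0$, a contradiction.

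The main obstacle is precisely this last estimate: the hypothesis \ref{e:t:sosc:condition} controls the second-order behaviour of $L_\lambda$ only along the fixed critical direction $h$, whereas the points $x_k$ approach $x_0$ along the varying directions $h_k$, which need not lie in the critical cone. Bridging this gap is where the $C^{1,1}$ assumption becomes indispensable, since the Lipschitz bound on $L_\lambda'$ controls the discrepancy between $h_k$ and $h$ uniformly along the whole segment; a mere $C^1$ hypothesis would leave the integrand unbounded below in a way that defeats the argument.
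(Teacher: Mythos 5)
Your proof is correct, but it takes a genuinely different route from the paper's. The paper proves the theorem by reduction to a known second-order sufficient optimality condition: applying the extended (Cauchy) mean value theorem to $\varphi_1(t)=L_\lambda(x_0+th)$ and $\varphi_2(t)=t^2$, it establishes
\begin{equation*}
\liminf_{t\to 0,t>0} \frac{L_\lambda'(x_0+th)h}{t} \;\leq\; \liminf_{t\to 0,t>0}\frac{L_\lambda(x_0+th)-L_\lambda(x_0)}{t^2/2}
\end{equation*}
for all $h$, so that \ref{e:t:sosc:condition} implies the hypothesis (condition (14)) of \cite[\citeTheorem~3.2]{LiXu10}, and the conclusion is then quoted from that reference. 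You instead give a self-contained contradiction argument: extract a sequence $x_k\in\Omega$ with $f(x_k)<f(x_0)$, pass to a limiting unit direction $h$, show via first-order expansions, stationarity, and complementary slackness that $h$ lies in the critical cone, and then derive a quantitative contradiction by integrating $L_\lambda'$ along the segments from $x_0$ to $x_k$, using the Lipschitz constant of $L_\lambda'$ to compare the varying directions $h_k$ with the fixed critical direction $h$. In effect you reprove, in this special case, the external result that the paper cites; what your argument buys is that it is elementary and makes explicit exactly where the $C^{1,1}$ hypothesis enters (the estimate $L_\lambda'(x_0+sh_k)h_k \geq L_\lambda'(x_0+sh)h-2Ms\|h_k-h\|$), whereas the paper's argument is much shorter but delegates the analytic work to the cited theorem. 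One cosmetic point: \ref{e:t:sosc:condition} permits the liminf to equal $+\infty$, in which case your $\gamma$ should be replaced by, say, $\min\{\gamma,1\}$ before you divide by it; this does not affect the argument.
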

\begin{proof}
We shall establish the inequality
\begin{equation}
\label{e:LaTorreRocca}
\liminf_{t\to 0,t>0} {L_\lambda'(\xmin + th)h}/{t} \leq \liminf_{t\to 0,t>0} {(L_\lambda(\xmin + th)-L_\lambda(\xmin))}/{(t^2/2)}
\end{equation}
for all $h \in \R^n$, which implies the condition (14) in \cite[\citeTheorem~3.2]{LiXu10}, and so the theorem follows from the latter sufficient optimality condition. We adopt a main argument of the proof of \cite[\citeTheorem~4]{LaTorreRocca03} to establish \ref{e:LaTorreRocca}. For $t>0$ sufficiently small, let $\varphi_1(t) = L_\lambda(\xmin + th)$ and $\varphi_2(t) = t^2$. By the extended mean value theorem, e.g. \cite{Walter04}, for each such $t$ there exists $\xi \in \intoo{0,t}$ satisfying 
\begin{equation}
\label{e:LaTorreRocca2}
2 \cdot \frac{\varphi_1(t)-\varphi_1(0)}{\varphi_2(t)-\varphi_2(0)}= 2 \cdot \frac{\varphi_1'(\xi)}{\varphi_2'(\xi)} = \frac{L_\lambda ' (\xmin + \xi h)h}{\xi}.
\end{equation}
The left hand side of \ref{e:LaTorreRocca2} equals $ {(L_\lambda(\xmin + th)-L_\lambda(\xmin))}/{(t^2/2)}$. The right hand side of \ref{e:LaTorreRocca2} is bounded from below by $\liminf_{\tau\to 0,\tau>0} L_\lambda '(\xmin + \tau h)h/\tau$ for any sufficiently small $t>0$. Thus, we established \ref{e:LaTorreRocca}. 
\end{proof}
}

The following lemma will be the key to the proof of Theorem \ref{thm:sublevelset:sufficiency}\ref{thm:sublevelset:sufficiency:i} and \ref{thm:sublevelset:sufficiency:ii}. (Roughly speaking, due to the strict inequality in \ref{e:t:sosc:condition} the proof of Theorem \ref{thm:sublevelset:sufficiency:convex}\ref{thm:sublevelset:sufficiency:i:convex} requires a different preliminary result. See Section \ref{sss:maxfunction}.) 
\begin{lemma} 
\label{l:optimalityandqsupp}
Let $s>0$ and $\Omega$ be defined as in \ref{e:sublevelset}, assume \reftohypothesis{\ref{hy:g}}, $0 \in \boundary \Omega$ and $1 \in \mathcal{A}(0)$. The vector $v \defas g_1'(0)^\ast / \|g_1 ' (0)\|$ quadratically supports $\Omega$ with radius $s$ at $0$ locally in each of the following cases:
\begin{asparaenum}[(i)]
\item 
\label{l:optimalityandqsupp:ii}
$0$ is a local minimum point of \ref{e:optimizationproblem} with $f \colon U \to \R$ given by 
\begin{equation*} 
f(x) = -\|g_1'(0)\| \cdot \big (\innerProd{v}{x} + \| x - \innerProd{v}{x}\!\cdot\!v \|^2 /(2s) \big ).
\end{equation*}
\item 
\label{l:optimalityandqsupp:iii}
The inequality
\begin{equation*}
\liminf_{t\to 0,t>0}\frac{g'_1(th)h}{t} > \frac{1}{s}\|g_1'(0)\|\cdot \|h\|^2
\end{equation*}
holds whenever $h \in \mathcal{C}(0)\cap \ker g'_1(0) \setminus \{0\}$.
\end{asparaenum}
\end{lemma}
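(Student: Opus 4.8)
The plan is to prove (ii) by reduction to (i): the hypothesis of (ii) is exactly what makes $0$ a local minimum point of \ref{e:optimizationproblem} via the second-order sufficient condition of Theorem \ref{t:sosc}, after which (i) applies verbatim. Two preliminary facts set the stage. Since $1 \in \mathcal{A}(0)$ we have $g_1(0) = 0$, so $0$ is a zero of $g_1$ and \reftohypothesis{\ref{hy:g}} forces $g_1'(0)$ to be surjective onto $\R$; hence $g_1'(0) \neq 0$, the vector $v = g_1'(0)^\ast/\|g_1'(0)\|$ is a genuine unit vector, and $\ker g_1'(0) = v^\perp$. Writing $Px = x - \innerProd{v}{x} v$ for the orthogonal projection onto $v^\perp$, any point $x = h - \mu v$ with $h \perp v$ satisfies $\innerProd{v}{x} = -\mu$ and $Px = h$, so a short computation gives $f(h - \mu v) = \|g_1'(0)\|\bigl(\mu - \|h\|^2/(2s)\bigr)$; in particular $f(0) = 0$.

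For (i), assume $0$ is a local minimum point, so $f(y) \geq f(0) = 0$ for all $y \in V \cap \Omega$ and some neighborhood $V$ of $0$. Taking $U := V$ in Definition \ref{def:LocalQuadSupport}, whenever $h \perp v$, $\mu \in \R$ and $h - \mu v \in U \cap \Omega$, the displayed formula together with $\|g_1'(0)\| > 0$ gives $\mu - \|h\|^2/(2s) \geq 0$, i.e. the quadratic-support inequality \ref{e:def:QuadSupport}. Hence $v$ quadratically supports $\Omega$ with radius $s$ at $0$ locally, as wanted.

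For (ii), I would verify the hypotheses of Theorem \ref{t:sosc} with this (smooth, hence $C^{1,1}$) $f$. Differentiation gives $f'(0) = -g_1'(0)$, the projection term contributing nothing at $0$; thus $\lambda_1 = 1$ and $\lambda_i = 0$ for $i \neq 1$ satisfy $L_\lambda'(0) = f'(0) + g_1'(0) = 0$, $\lambda_i \geq 0$ and $\lambda_i g_i(0) = 0$. The critical cone then collapses to $\mathcal{C}(0) \cap \ker g_1'(0) \setminus \{0\}$, precisely the set in the hypothesis of (ii). For $h$ in this set, using $Ph = h$ and $\innerProd{v}{h} = 0$, the quantity $f'(th)h/t = -\|g_1'(0)\|\,\|h\|^2/s$ is constant in $t$, whence
\[
\liminf_{t\to 0,t>0}\frac{L_\lambda'(th)h}{t} = -\frac{\|g_1'(0)\|\,\|h\|^2}{s} + \liminf_{t\to 0,t>0}\frac{g_1'(th)h}{t} > 0
\]
by the strict inequality assumed in (ii). Thus \ref{e:t:sosc:condition} holds, Theorem \ref{t:sosc} makes $0$ a local minimum point of \ref{e:optimizationproblem}, and (i) completes the argument.

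The one delicate point, and the main obstacle, is that Theorem \ref{t:sosc} is stated under \reftohypothesis{\ref{h:op}}, which requires all $g_i$ to be $C^{1,1}$ on the whole domain, whereas \reftohypothesis{\ref{hy:g}} guarantees this only near the zero sets. I would circumvent this by shrinking to a neighborhood $U'$ of $0$ on which every active $g_i$ ($i \in \mathcal{A}(0)$) is $C^{1,1}$ while every inactive $g_i$ ($i \notin \mathcal{A}(0)$, so $g_i(0) < 0$) stays strictly negative; on $U'$ one has $\Omega \cap U' = \{x \in U' : g_i(x) \leq 0,\ i \in \mathcal{A}(0)\}$, so the inactive constraints may be discarded, and since both local minimality and local quadratic support are local notions, nothing is lost. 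The remaining work is the routine differentiation and the bookkeeping indicated above.
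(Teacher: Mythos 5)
Your proposal is correct and follows essentially the same route as the paper: part (i) rests on the same evaluation $f(h-\mu v)=\|g_1'(0)\|\,(\mu-\|h\|^2/(2s))$ (you argue directly from local minimality, the paper states the contrapositive), and part (ii) is the same reduction to (i) via Theorem \ref{t:sosc} with multiplier $\lambda=(1,0,\ldots,0)$ and the identical computation $f'(th)h/t=-\|g_1'(0)\|\,\|h\|^2/s$ yielding \ref{e:t:sosc:condition}. Your final localization step, shrinking to a neighborhood where the active $g_i$ are $C^{1,1}$ and the inactive ones stay negative so that \reftohypothesis{\ref{h:op}} applies, is a hypothesis-matching detail the paper's proof passes over silently; it is a sound (and welcome) addition but does not change the substance of the argument.
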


\myvspace
\begin{proof}
Let us show that \ref{l:optimalityandqsupp:ii} is sufficient by proving the contrapositive. Suppose $v$ does not quadratically support $\Omega$ at $0$ locally. Then there exists a sequence $(h,\mu)\colon \mathbb{N} \to \ker g_1'(0) \times \R$ such that $h_k$ converges to $0$, $h_k - \mu_k v \in \Omega$ and $\|h_k\|^2 > 2 s \mu_k$ for all $k$. It holds that
\begin{align*}
f( h_k - \mu_k v) = \|g_1'(0)\| \cdot (\mu_k - \|h_k\|^2/(2s)),
\end{align*} 
so $f(0)>f( h_k - \mu_k v)$, hence $0$ is not a local minimum point of \ref{e:optimizationproblem}. 
\\
To show that \ref{l:optimalityandqsupp:iii} is sufficient, we show that \ref{l:optimalityandqsupp:iii} implies \ref{l:optimalityandqsupp:ii}. For this purpose, we intend to use Theorem \ref{t:sosc}. Let $\lambda = (1,0,\ldots,0) \in \R^m$. The map $L_\lambda \colon U \to \R$ given by $L_\lambda(x) = f(x) + g_1(x)$ satisfies $L_\lambda(0) = 0$ and $L_\lambda'(0)= 0$ since for $y \in U$, $h \in \R^n$ we have
\begin{equation*}
\label{}
f'(y)h = -\|g_1'(0)\| \cdot \big (\innerProd{v}{h} +(y - \innerProd{v}{y} v)^\ast ( h -\innerProd{v}{h} v )/s \big ).
\end{equation*}
If we verify \ref{e:t:sosc:condition} for $h \in \mathcal{C}(0) \cap \ker g_1'(0) \setminus \{0 \}$, the proof is completed.
First note that $f$ is twice continuously differentiable, and so
\newcommand{\liminftopluszero}{\liminf_{t\to 0,t>0}}
\begin{align*}
\liminftopluszero \frac{L_\lambda'(th)h}{t} 
&\geq\liminftopluszero
\frac{f'(th)h}{t} + \liminftopluszero \frac{g_1'(th)h}{t}>0, 
\end{align*} 
where for the last inequality we used 
$\liminftopluszero \textfrac{f'(th)h}{t} = f''(0)h^2 =\linebreak -\frac{1}{s}\cdot \|g_1'(0)\|\cdot \|h-\innerProd{v}{h}\!\cdot \! v \|^2$,
and $\innerProd{v}{h} = 0$.
\end{proof}
\subsubsection{A lower bound for second-order derivatives of max-functions}
\label{sss:maxfunction}
Maximum functions appear quite naturally when considering sublevel sets of the form \ref{e:sublevelset}: An equivalent representation of \ref{e:sublevelset} is $\{x \in U\ | \ \max_{i \in \{1,\ldots,m\}} g_i(x) \leq 0 \}$. This is the reason for using maximum functions as a tool in the proof of Theorem \ref{thm:sublevelset:sufficiency:convex}\ref{thm:sublevelset:sufficiency:i:convex}. In this context, we need the next proposition. Before stating the result, we introduce generalized second-order directional derivatives.

Let $U \subseteq \R^n$ be open, and $f\colon U \to \R$ a function. We denote by $f'(x;h)$ the limit
$
\lim_{t \to 0,t>0} (f(x+th)-f(x))/t
$
if it exists for $x \in U$, $h \in \R^n$. Note that $f'(x;h) = f'(x)h$ if $f$ is differentiable. Furthermore, we set
\begin{align*}
\label{}
\overline{D}^2 f(x,h^2) &= \limsup_{t \to 0,t>0}\textfrac{(f'(x+th;h)-f'(x;h))}{t},\\
\underline{D}^2 f(x,h^2) &= \liminf_{t \to 0,t>0}\textfrac{(f'(x+th;h)-f'(x;h))}{t}.
\end{align*} 
\begin{proposition}
\newcommand{\activeset}{I}
\label{th:secondderivatives}
Let $m \in \mathbb{N}$, and $U \subseteq \R^n$ open and convex. Let $f_i \colon U \to \R$ be of class $C^{1,1}$, and $f\colon U \to \R$ be given by $f(x) = \max_{i \in \{1,\ldots,m\}} f_i(x)$. For $x \in U$ and $h \in \R^n$ define $\activeset(x) = \{ i \in \{1,\ldots,m\} \ | \ f_i(x) = f(x) \}$ and $M = \operatorname{argmax}_{i \in \activeset(x)} f'_i(x;h)$. Then the following inequality holds for all $x \in U$ and $h \in \R^n$:
\begin{equation}
\label{e:th:secondderivatives}
\overline D^2f(x,h^2) \geq \max_{i \in M} \underline D^2 f_i(x,h^2).
\end{equation}
\end{proposition}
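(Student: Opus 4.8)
The plan is to fix a maximizing branch and compare the max-function with that single smooth branch along the ray $t \mapsto x+th$. First I would record the elementary fact that, for a finite maximum of $C^1$ functions, the one-sided directional derivative exists everywhere and is governed by the active branches: for every $y \in U$ and $h \in \R^n$,
\[
f'(y;h) = \max_{j \in I(y)} f_j'(y)h ,
\]
which holds because, for small $t>0$, only indices active at $y$ can attain the maximum near $y$, so that $(f(y+th)-f(y))/t = \max_{j\in I(y)}(f_j(y+th)-f_j(y))/t$. In particular $f'(x;h) = f_{i}'(x)h$ for every $i \in M$, since $M$ consists precisely of the active indices realizing $\max_{j\in I(x)} f_j'(x)h$.

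Next I would choose $i^* \in M$ with $\underline D^2 f_{i^*}(x,h^2) = \max_{i\in M}\underline D^2 f_i(x,h^2)$ and set $\phi(t) = f(x+th)$ and $\psi(t) = f_{i^*}(x+th)$. By construction $\phi \ge \psi$ near $0$, $\phi(0)=\psi(0)$, and the right derivatives agree at $0$, since $\phi_+'(0) = f'(x;h) = f_{i^*}'(x)h = \psi'(0)$. Because $\phi_+'(t) = f'(x+th;h)$ and $\psi'(t) = f_{i^*}'(x+th)h$, the proposition reduces to the one-dimensional statement $\limsup_{t\to 0^+}(\phi_+'(t)-\phi_+'(0))/t \ge \liminf_{t\to 0^+}(\psi'(t)-\psi'(0))/t$.

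The crux, and the step I expect to be the main obstacle, is that the active set may change along the ray, so one cannot assert $\phi_+'(t) \ge \psi'(t)$ for all small $t>0$: the branch $i^*$ need not remain active. I would circumvent this by an absolute-continuity argument. Restricted to a segment about $0$ the function $\phi$ is a finite maximum of $C^{1,1}$ functions, hence Lipschitz, so $\phi(t) = \phi(0) + \int_0^t \phi_+'(s)\,ds$ (as $\phi' = \phi_+'$ almost everywhere); subtracting the analogous identity for the $C^1$ function $\psi$ yields $g(t) := \phi(t)-\psi(t) = \int_0^t (\phi_+'(s)-\psi'(s))\,ds$. Since $g \ge 0$ with $g(0)=0$, the integrand cannot be negative throughout any interval $(0,\varepsilon)$; hence there is a sequence $s_k \to 0^+$ with $\phi_+'(s_k) \ge \psi'(s_k)$, that is $f'(x+s_kh;h) \ge f_{i^*}'(x+s_kh)h$.

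Finally I would chain the estimates along this sequence. Subtracting $f'(x;h)=\psi'(0)$ and dividing by $s_k>0$ gives
\[
\frac{f'(x+s_kh;h)-f'(x;h)}{s_k} \ge \frac{\psi'(s_k)-\psi'(0)}{s_k}.
\]
Taking $\limsup_k$ on the left (which is bounded above by $\overline D^2 f(x,h^2)$, as $(s_k)$ is a subsequence of $t\to 0^+$) and bounding the right-hand side below by $\liminf_{t\to 0^+}(\psi'(t)-\psi'(0))/t = \underline D^2 f_{i^*}(x,h^2)$ (a $\liminf$ over a subsequence dominates the $\liminf$ over the full net) yields $\overline D^2 f(x,h^2) \ge \underline D^2 f_{i^*}(x,h^2) = \max_{i\in M}\underline D^2 f_i(x,h^2)$, which is the assertion.
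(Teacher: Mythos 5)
Your proposal is correct, and its skeleton coincides with the paper's proof: first the directional-derivative formula $f'(y;h)=\max_{j\in I(y)}f_j'(y)h$, which gives $f'(x;h)=f_{i}'(x)h$ for $i\in M$; then the existence of a sequence $s_k\to 0^+$ along which $f'(x+s_kh;h)\geq f_{i^*}'(x+s_kh)h$; and finally passing to the limit in the difference-quotient inequality, handling the $\limsup$/$\liminf$ over the subsequence exactly as the paper does. The one genuine difference is the middle step: the paper obtains the comparison sequence by citing \cite[\citeLemma~7.6]{BednarikPastor04}, whereas you prove it from scratch, writing $\phi-\psi$ as the integral of $\phi_+'-\psi'$ (a finite max of $C^{1,1}$ functions is locally Lipschitz, hence absolutely continuous, and $\phi'=\phi_+'$ almost everywhere) and noting that a nonnegative function vanishing at $0$ cannot be the integral of a function that is strictly negative on an entire interval $(0,\varepsilon)$. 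This integration argument is sound and makes the proof self-contained, which is a real merit here: the proposition is itself a correction of \cite[\citeTheorem~7.5]{BednarikPastor04}, so avoiding reliance on a lemma from that same flawed reference strengthens the exposition, at the modest cost of a slightly longer argument and a small measure-theoretic input (Lipschitz implies absolutely continuous).
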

Proposition \ref{th:secondderivatives} is a correction of \cite[\citeTheorem~7.5]{BednarikPastor04}:
\begin{example}
Let $f,f_1,f_2 \colon \R \to \R$ be defined by $f_1(x)=x^2-x$, $f_2(x) = x$ and $f(x) = \max_{i=1,2}f_i(x)$. Proposition \ref{th:secondderivatives} implies $\overline D^2 f(0,1^2)\geq 0$. Proposition \ref{th:secondderivatives} does not hold if $M$ is replaced by $I(x)$ in \ref{e:th:secondderivatives}, in general. Indeed, direct calculation leads to $\overline{D}^2f(0,1^2) = 0 < \max_{i=1,2} f_i''(0) = 2$. However, in \cite[\citeTheorem~7.5]{BednarikPastor04}, where $\overline{D}^2(x,h^2)$ is denoted by $f'^{u}(x;h,h)$, the reverse inequality is claimed.  Hence, the cited theorem is untrue, in general. (It was not revised in \cite{BednarikPastor06}.) Nevertheless, we adopt some ideas from \cite{BednarikPastor04} to establish Proposition \ref{th:secondderivatives}.
\end{example}
\begin{proof}[Proof of Proposition \ref{th:secondderivatives}]
First note that $f'(x;h) = \max_{i \in {I}(x)} f'_i(x;h)$ \cite[\citeCorollary~I.3.2]{DemyanovRubinov95}, and in particular $f'(x;h) = f'_i(x;h)$ for all $i \in M$. Let $i \in M$ and choose a sequence $\sequence{t}{\R_{+}\!\setminus\! \{0\}}{~}$ converging to $0$ such that \cite[\citeLemma~7.6]{BednarikPastor04}
$f'(x+t_kh;h)\geq f'_i(x+t_kh;h).$
Then
$
\underline D^2f_i(x,h^2) \leq \lim_{k \to \infty} \textfrac{(f'_i(x+t_kh;h)-f'_i(x;h))}{t_k},
$
without loss of generality, and
\begin{equation}
\label{e:ffi}
\frac{f'(x+ t_k h;h)-f'(x;h)}{t_k} \geq \frac{f_i'(x+t_k h;h)-f_i'(x;h)}{t_k}.
\end{equation}
By taking limits on both sides of \ref{e:ffi} the proof is finished. 
\end{proof}
\subsubsection{Regular closedness of connected sublevel sets and connectedness of their interiors} 
\label{sss:topology}
We prove the following property of sublevel sets that we consider.
\label{s:proof}
\begin{lemma}
\label{l:closureofinterior}
Let $\Omega$ be defined by \ref{e:sublevelset}. Assume \reftohypothesis{\ref{hy:g}} and $\interior \mathcal{C}(x) \neq \emptyset$ for all $x \in \boundary \Omega$. Then $\Omega$ is regular closed, i.e. $\Omega =\closure(\interior \Omega)$, and $\interior \Omega$ is connected.
\end{lemma}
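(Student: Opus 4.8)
The plan is to prove the two assertions separately, both times exploiting that the constraint qualification $\interior\mathcal{C}(x)\neq\emptyset$ supplies, at every boundary point, a direction pointing strictly into $\Omega$. First note that since each $g_j$ with $j\in\mathcal{A}(x)$ is a submersion on its zero set, $g_j'(x)\neq 0$, so $\mathcal{C}(x)$ is a genuine intersection of closed half-spaces and its nonempty interior consists exactly of those $h$ with $g_j'(x)h<0$ for all $j\in\mathcal{A}(x)$. Fix such an $h$. For regular closedness I only need $\Omega\subseteq\closure(\interior\Omega)$, since $\closure(\interior\Omega)\subseteq\Omega$ holds because $\Omega$ is closed, and only boundary points require attention. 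For $x\in\boundary\Omega$, a first-order expansion at $x$ gives $g_j(x+th)<0$ for every active $j$ and all small $t>0$, while continuity alone handles the inactive constraints; hence $x+th\in\interior\Omega$ for small $t>0$, and letting $t\to 0$ yields $x\in\closure(\interior\Omega)$.

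For connectedness of $\interior\Omega$ I argue by contradiction. Suppose $\interior\Omega=A\cup B$ with $A,B$ disjoint, nonempty and open. By the first part $\Omega=\closure(\interior\Omega)=\closure A\cup\closure B$, and since $\Omega$ is connected these two nonempty closed sets must meet; pick $x_0\in\closure A\cap\closure B$. As $A$ and $B$ are open and disjoint, $x_0$ lies in neither, so $x_0\in\boundary\Omega$.

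The crux is a local connecting step at $x_0$. Using the inward direction $h$ at $x_0$, I will produce radii $\delta,\tau,\epsilon>0$ and a fixed point $p=x_0+\tau h\in\interior\Omega$ such that every $y\in\oBall(x_0,\delta)\cap\interior\Omega$ is joined to $p$ by a connected set inside $\interior\Omega$. That set is the concatenation of the segment $t\mapsto y+th$, $t\in[0,\tau]$ — which stays in $\interior\Omega$ because a first-order estimate that is uniform in $y$ (using continuity of the active $g_j'$ near the zero $x_0$, together with $g_j(y)\le 0$) forces $g_j(y+th)<0$ for all $j$ once $y$ is close to $x_0$ and $t\in(0,\tau]$ — followed by a segment inside a ball $\oBall(p,\epsilon)\subseteq\interior\Omega$, after shrinking $\delta$ so that $y+\tau h$ lands in that ball. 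Since $x_0\in\closure A\cap\closure B$, the ball $\oBall(x_0,\delta)$ contains points $a\in A$ and $b\in B$; both are then joined to $p$ within $\interior\Omega$, so $a$ and $b$ lie in one connected subset of $\interior\Omega=A\cup B$, which must lie wholly in $A$ or wholly in $B$ — contradicting $a\in A\setminus B$ and $b\in B\setminus A$.

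I expect the regular-closedness half and the cone algebra to be routine; the genuine obstacle is the connectedness. The subtlety is that regular closedness together with connectedness of $\Omega$ is by itself insufficient (two closed balls meeting at a single point form a connected, regular closed set with disconnected interior). What rescues the argument is exactly the constraint qualification: at the pinch point $x_0$ it furnishes a single direction $h$ that is simultaneously inward for all active constraints, which is precisely what lets me funnel nearby interior points on both sides into the common point $p$ through $\interior\Omega$. The care needed is to make the first-order estimate uniform in $y$ near $x_0$ (so that one $\tau$ serves all such $y$) and to keep all points under consideration inside $U$.
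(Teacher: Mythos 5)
Your proof is correct, and its overall skeleton coincides with the paper's: regular closedness is obtained by pushing each boundary point into $\Omega$ along a direction $h$ with $g_j'(x_0)h<0$ for all active $j$, and connectedness of $\interior\Omega$ follows from a pinch-point contradiction combining connectedness of $\Omega$ with a local connectivity property of $\interior\Omega$ near every boundary point. The difference lies in how that local property is produced. The paper proves a separate technical lemma (Lemma \ref{l:star-shaped}) asserting, for a single constraint, that a neighborhood $V$ of an inward segment $\conv\{0,q\}$ can be chosen so that $V\cap\interior\Omega$ is \emph{star-shaped} with respect to $q$; this needs a mean-value estimate that is uniform over the varying directions $q-y$, and local connectedness of $V\cap\interior\Omega$ is then immediate. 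You instead funnel all interior points near $x_0$ to a single point $p=x_0+\tau h$ by translating the one fixed direction $h$, so your uniform estimate only concerns $g_j'(z)h$ for $z$ near $x_0$ (simpler, no varying directions), and you then patch endpoints together inside a small ball $\oBall(p,\epsilon)\subseteq\interior\Omega$; you also treat all active constraints simultaneously and dismiss inactive ones by continuity, whereas the paper applies its single-constraint lemma after assuming WLOG that all constraints are active. Finally, your global step phrases disconnectedness as a separation $\interior\Omega=A\cup B$ while the paper argues with a connected component $O_1$ and a sequence in its complement; these are equivalent. What the paper's route buys is a stronger, reusable local statement (star-shapedness); what yours buys is a more elementary local estimate. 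Both arguments hinge on exactly the same use of the constraint qualification, and your proof, including the care taken to make the estimate uniform in $y$ and to stay inside $U$, is sound.
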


\myvspace
The above lemma can be seen as the first step in the proofs of our main results. It allows one to use \cite[\citeTheorem~4.9]{Valentine64} and Corollary \ref{cor:qsupp:clintOmega} in the proof of Theorems \ref{thm:sublevelset:sufficiency:convex}\ref{thm:sublevelset:sufficiency:i:convex} and \ref{thm:sublevelset:sufficiency}\ref{thm:sublevelset:sufficiency:i}, respectively. The subsequent technical lemma will be needed for the proof of Lemma \ref{l:closureofinterior}.
\begin{lemma}
\label{l:star-shaped}
Let $U \subseteq \R^n$ be open, $g \colon U \to \R$ be of class $C^1$, $\Omega = \{ x \in U \ | \ g(x) \leq 0 \}$, and assume $g(0)=0$ and $g'(0)p < 0$ for some $p \in \R^n$. Then there exists some $\varepsilon>0$ such that the following holds for all $q \in \intoc{0,\varepsilon} \cdot p$: \\
$\conv\{0,q\}\setminus \{0\} \subseteq \interior \Omega$, and there exists an open neighborhood $V \subseteq \R^n$ of $\conv\{0 , q \}$ for which $V \cap \interior \Omega$ is star-shaped with respect to $q$. 
\end{lemma}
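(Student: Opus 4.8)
The plan is to split the two assertions. The first is elementary: since $g$ is of class $C^1$ and $g'(0)p<0$, the real function $\sigma \mapsto g(\sigma p)$ has derivative $g'(0)p<0$ at $\sigma = 0$, hence is negative for all sufficiently small $\sigma>0$. Choosing $\varepsilon>0$ below this threshold, every point of $\conv\{0,q\}\setminus\{0\} = \{\sigma p : \sigma \in \intoc{0,t}\}$ (where $q = tp$, $t\le\varepsilon$) satisfies $g<0$ and thus lies in $\{x\in U : g(x)<0\}\subseteq\interior\Omega$, which is the first claim.

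For the second assertion I would first fix coordinates adapted to $p$. Write $\R^n = H \oplus \R p$ with $H = \{u : \innerProd{u}{p}=0\}$ and represent points as $u+\tau p$. By continuity of $g'$ there is a cylindrical neighborhood $N$ of $0$ about the $p$-axis on which $g'(\cdot)p<0$; there $\tau\mapsto g(u+\tau p)$ is strictly decreasing. Applying the implicit function theorem to $(u,\tau)\mapsto g(u+\tau p)$ at $(0,0)$ --- legitimate because its partial derivative with respect to $\tau$ at the origin is $g'(0)p\neq 0$ --- yields a $C^1$ map $\beta$ on a ball $\{\|u\|<\rho_0\}$ in $H$ with $\beta(0)=0$ and $g(u+\beta(u)p)=0$, so that on $N$ one has $g(u+\tau p)<0$ if and only if $\tau>\beta(u)$. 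Using monotonicity once more, a point of $N$ with $g=0$ cannot be interior (moving slightly in direction $-p$ raises $g$ above $0$), whence $\interior\Omega\cap N = \{g<0\}\cap N$; thus near $0$ the interior of $\Omega$ is exactly the epigraph $\{\tau>\beta(u)\}$ of $\beta$.

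The core of the proof is the choice of $V$ together with a second-order estimate carried out with only $C^1$ data. Let $K'\ge 1$ dominate $\|\beta'\|$ on $\{\|u\|<\rho_0\}$ (after shrinking $\rho_0$). For $q = tp$ I take $V$ to be the thin cylinder $\{u+\tau p : \|u\|<\rho,\ \tau\in\intoo{-\rho,\,t+\rho}\}\subseteq N$ with $\rho<\min\{\rho_0,\,t/(2K')\}$, shrinking $\varepsilon$ if necessary so that these cylinders lie in $N$; this is an open neighborhood of $\conv\{0,q\}$ and $q\in V\cap\interior\Omega$. Given $z = u+\tau p\in V\cap\interior\Omega$, the cylinder structure keeps the segment $\zeta(s) = su + ((1-s)t+s\tau)p$, $s\in\intcc{0,1}$, inside $V$, and by the graph description $\zeta(s)\in\interior\Omega$ is equivalent to $f(s) := (1-s)t + s\tau - \beta(su) > 0$. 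Writing $\Psi(s) = \beta(su)-s\beta(u)$ and integrating $\beta'$ along the ray $\sigma\mapsto\sigma u$, one obtains the bound $|\Psi(s)|\le 2s(1-s)K'\|u\|$, so that $f(s)\ge (1-s)(t-2K'\|u\|) + s(\tau-\beta(u))$. Since $2K'\|u\|<2K'\rho<t$ and $\tau>\beta(u)$, both summands are nonnegative and at least one is strictly positive for every $s$; hence $f(s)>0$ throughout $\intcc{0,1}$. Therefore $\zeta(s)\in V\cap\interior\Omega$ for all $s$, i.e. $V\cap\interior\Omega$ is star-shaped with respect to $q$.

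I expect the estimate on $\Psi$ to be the main obstacle. The naive bound $|\Psi(s)|\le 2sK'\|u\|$ degenerates at $s=1$ and only controls $f$ away from the far endpoint $z$; what makes the argument close is the factor $s(1-s)$, reflecting that $\Psi$ measures the deviation of $\beta$ from its chord and vanishes at both ends $s=0,1$. Because $g$, and hence $\beta$, is merely of class $C^1$, this gap cannot be filled by a second-order Taylor expansion; it must instead come from the Lipschitz/integral control of $\beta'$ furnished by the implicit function theorem, which is exactly why $V$ must be chosen thin relative to the height $t$ of $q$, as encoded in the condition $\rho<t/(2K')$.
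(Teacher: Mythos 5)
Your proof is correct, but it takes a genuinely different route from the paper's. The paper never invokes the implicit function theorem: it estimates $g$ directly with the mean value theorem, choosing $\varepsilon$ (and then $\delta$) so that $g'$ varies little near $0$, and concludes that $\conv\{y,q\}\setminus\{y\}\subseteq\interior\Omega$ for \emph{every} $y\in\Omega\cap\oBall(0,\delta)$; it then sets $V=\oBall(q,\gamma)\cup\conv(\{q\}\cup\oBall(0,\gamma))$, which is star-shaped with respect to $q$ by construction, and disposes of the points $y\in V\cap\interior\Omega$ with $\|y\|\geq\delta$ via the geometric containment $\conv\{y,q\}\subseteq\conv\{z,q\}$ for a suitable $z\in\Omega\cap\oBall(0,\delta)$ near the axis, to which the segment property already applies. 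You instead pass through the implicit function theorem, identify $\interior\Omega$ locally with the strict epigraph of the $C^1$ graph function $\beta$, and close with the chord-deviation estimate $|\beta(su)-s\beta(u)|\leq 2s(1-s)K'\|u\|$; that estimate is valid --- writing $\beta(su)-s\beta(u)=(1-s)\int_0^s\beta'(\sigma u)u\,d\sigma-s\int_s^1\beta'(\sigma u)u\,d\sigma$ gives exactly the $s(1-s)$ factor --- and your conclusion $f(s)>0$ on $\intcc{0,1}$ follows as you say. The paper's route buys economy of means: only first-order mean value estimates, no graph representation, and a neighborhood whose star-shapedness is automatic. Your route buys structure: the epigraph picture reduces star-shapedness to a scalar inequality, and it makes transparent that only an $L^\infty$ bound on $\beta'$ (hence only $C^1$ regularity) is needed, with the thinness condition $\rho<t/(2K')$ playing the role that the choice $\delta\leq\|q\|$, $\gamma<\delta/2$ plays in the paper. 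One piece of bookkeeping you should tighten: the equivalence $u+\tau p\in\interior\Omega\Leftrightarrow\tau>\beta(u)$ for points of $V$ requires the monotonicity region $N$ to contain not just $V$ but the slightly larger cylinder $\{\,u+\tau p \,:\, \|u\|<\rho,\ -t/2<\tau<t+\rho\,\}$, since $\beta(u)$ may lie below $-\rho$ while still being the relevant boundary height; this is covered by the same shrinking of $\varepsilon$ and $\rho$ you already perform, so it is a presentational gap only, not a mathematical one.
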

\begin{proof}
First choose $\varepsilon>0$ such that $\| x\| \leq \varepsilon \| p \|$ implies both $x \in U$ and 
$$
3 \| g'(0) -g'(x) \| < - g'(0) p / \|p\|.
$$
Let $q \in \intoc{0, \varepsilon} \cdot p$ and choose $\delta \in \intoc{0,1} \cdot \|q\|$ such that \looseness=-1
\begin{equation*}
-g'(0) p / \|p\| < -\frac{3}{2} g'(0) (q-y) /\|q-y\|
\end{equation*}
whenever $\|y\| < \delta$. The mean value theorem shows
\begin{equation*}
| g(y + t(q-y)) - g(y) - t g'(0) (q-y) | \leq - \frac{t}{2} g'(0) (q-y)
\end{equation*}
for all $y \in \oBall(0,\delta)$ and all $t \in \intcc{0,1}$.
Thus $g(y + t(q-y) ) < 0$ for all $y
\in \oBall(0,\delta) \cap \Omega$ and all $t \in \intoc{0,1}$,
that is,
\begin{equation}
\label{l:star-shaped:proof:*}
\conv \{ y, q \} \setminus \{ y \} \subseteq \interior \Omega
\end{equation}
holds for all $y \in \Omega \cap \oBall(0,\delta)$. Now choose
$\gamma \in \intoo{0,\delta/2}$ such that
$\oBall(q,\gamma) \subseteq \Omega$ and
$\oBall(\delta q/2/\| q\|, \gamma) \subseteq \Omega$, and set
$V = \oBall(q,\gamma) \cup \conv (\{ q \} \cup \oBall(0,\gamma))$.
As $V$ is star-shaped with respect to $q$ it remains to show that
\ref{l:star-shaped:proof:*} holds whenever
$y \in V \cap \interior \Omega$.
That implication is obvious if additionally
$y \in \oBall(q,\gamma) \cup \oBall(0,\delta)$. Otherwise,
$y \in \conv ( \{ q \} \cup \oBall(0,\gamma) ) \cap \interior \Omega$
and $\| y \| \geq \delta$. Then
$\conv \{ y, q \} \subseteq \conv \{ z, q \}$ for some
$z \in \oBall(\delta q /2 /\| q \|,\gamma)
\subseteq \Omega \cap \oBall(0,\delta)$, so
$\conv \{ z, q \} \subseteq \interior \Omega$ by
\ref{l:star-shaped:proof:*}, which completes the proof.
\end{proof}

\begin{proof}[Proof of Lemma \ref{l:closureofinterior}]
\newcommand{\raum}{\R^n}
The claim is trivial for $\Omega = \raum$, $\Omega = \emptyset$ and $\Omega$ a singleton, so we do not further consider these cases. Let $x \in \partial \Omega$ and without loss of generality, $x = 0$, $\mathcal{A}(0)=\{1,\ldots,m\}$. Let $p \in \interior \mathcal{C}(0)$. By assumption $g_i'(0)p<0$ for all $i \in \{1,\ldots , m\}$, and Lemma \ref{l:star-shaped} shows there exist $q \in \conv\{0,p\}\setminus \{0 \}$ and a neighborhood $V \subseteq \raum$ of $\conv\{0,q\}$ such that $V \cap \interior \Omega$ is star-shaped with respect to $q$, and $\conv\{0,q\} \setminus \{0\} \subseteq \interior \Omega$. The latter fact implies $\Omega = \closure (\interior \Omega)$, and the former, that for each $x_0 \in \boundary \Omega$ there exists a neighborhood $V \subseteq \raum$ of $x_0$ for which $V \cap \interior \Omega$ is connected. \\
In order to show $\interior \Omega$ is connected, let $O_1$ be a connected component of $\interior \Omega$ and assume $O_2$ = $\interior \Omega\setminus O_1$ is not empty. Then $\boundary O_1\cap \boundary O_2 \neq \emptyset$ since $\Omega = \closure O_1\cup \closure O_2$ and $\Omega$ is connected. So pick $x_0\in \boundary O_1\cap \boundary O_2$. Then there exists a sequence $\sequence{y}{O_2}{}$ that converges to $x_0$. But by the above argument, there exists a neighborhood $V\subseteq \R^n$ of $x_0$ for which $V\cap \interior \Omega$ is connected. This implies $y_k \in O_1$ for $k$ sufficiently large, which is a contradiction.
\end{proof}
\subsubsection{Proofs of Theorems \ref{thm:sublevelset:sufficiency:convex} and \ref{thm:sublevelset:sufficiency}}
\label{sss:proofs}
Now, we are in a position to prove our main results.
\begin{proof}[Proof of Theorem \ref{thm:sublevelset:sufficiency:convex}]
The claims are trivial for $\Omega = \R^n$, $\Omega = \emptyset$ and $\Omega$ a singleton, so we do not further consider these cases. 
Let us prove \ref{thm:sublevelset:sufficiency:i:convex}.\\
By Lemma \ref{l:closureofinterior}, we have $\Omega = \closure ( \interior \Omega)$. Therefore, if we assume $\Omega$ being non-convex then there exists $x \in \boundary\Omega$ that is not a point of mild convexity \cite[\citeTheorem~4.9]{Valentine64}. By the definition, this means that there exists $\zeta \in \R^n \setminus \{0\}$ such that 
\begin{equation*}
t \in [-1,1] \setminus \{0\} \ \Rightarrow \ x + t \zeta \in \interior \Omega.
\end{equation*}
Without loss of generality, let $x = 0$ and  $\mathcal{A}(0) = \{1,\ldots,m\}$. Choose some $v \in \R^n$ such that $g_i'(0)v<0$ for all $i\in \mathcal{A}(0)$. This choice is possible since $\interior \mathcal{C}(0) \neq \emptyset$. An application of the implicit function theorem provides functions $\mu_i \colon W_i \to \R$ on convex subsets $W_i \subseteq \ker g_i'(0)$ which represent the boundary of $\{x \in U \ | \ g_i(x) \leq 0 \}$ locally at $0$ in the sense that $g_i(h + v\mu_i(h)) = 0$ for all $h \in W_i$ and such that $g_i(h+\lambda v)\leq 0$ implies $\lambda \geq \mu_i(h)$ \cite{i07Convex}. Let $W = \cap_{i \in \mathcal{A}(0)} W_i$. Note that $\zeta \in W$, without loss of generality. Indeed, we have $\pm g_i'(0)\zeta = \lim_{t \to 0,t>0} g_i(\pm t\zeta)/t \leq 0$. \\
Define the function $\mu \colon W \to \R$ by $\mu(h) = \max_{i \in \mathcal{A}(0)}\mu_i(h)$, the point $h + v\mu_i(h)$ by $p_i(h)$, and the point $h + v\mu(h)$ by $p(h)$. Hence, $p(h) \in \partial \Omega$ for all $h\in W$. Note that $p(h),p_i(h)$ can be considered as maps $W \to \R^n$. Let $\nu \colon I \to \R$ be the function $t \mapsto \mu(t\zeta)$, where $I \defas \{ t \in \intcc{-1,1} | \, t\zeta \in W\}$. Note that $I=\intcc{-1,1}$ as $W$ is convex. The proof of \ref{thm:sublevelset:sufficiency:i:convex} is completed if we show that $\nu$ is convex on $I$. Indeed, together with the fact that $\zeta \in W \cap \interior \Omega$ this leads to the contradiction
\begin{equation*}
\label{}
0 = \nu(0) = \mu(-\textfrac{\zeta}{2} + \textfrac{\zeta}{2}) \leq \mu(-\zeta)/2 + \mu(\zeta)/2 < 0.
\end{equation*}
So let us show convexity of $\nu$. We remark that $D_{\!+}\nu\,(t) \defas \mu'(t\zeta;\zeta)$ exists\linebreak \cite[\citeCorollary~I.3.2]{DemyanovRubinov95}. $\nu$ is convex if $D_{\!+}\nu$ is increasing \cite[\citeTheorem~5.3.1]{HiriartUrrutyLemarechal93i}. For the latter, it is enough to prove that 
$
\overline D^2 \mu (t\zeta,\zeta^2)\geq 0
$
for all but countably many $t \in I$ \cite[\citeTheorem~12.24]{Walter04}. To verify the latter inequality, first observe that, by Proposition \ref{th:secondderivatives}, the inequality
\begin{equation}
\label{e:max}
\overline D^2 \mu(t\zeta,\zeta^2)\geq \max_{i \in \mathcal{A}(p(t\zeta))}\underline{D}^2\mu_{i}(t\zeta,\zeta^2)
\end{equation}
holds for $t \in I$ if $\mu_{i}'(t\zeta)\zeta = \mu_{j}'(t\zeta)\zeta$ for all $i,j \in \mathcal{A}(p(t\zeta))$ by Proposition \ref{th:secondderivatives}. However, for $i \neq j$ the set
$$
N_{i,j} = \{t \in I \ | \ \{i,j \} \subseteq \mathcal{A}(p(t\zeta)), \ \mu_{i}'(t\zeta)\zeta \neq \mu_{j}'(t\zeta)\zeta\}
$$
is discrete, and therefore countable. Indeed, let $\tau \in N_{i,j}$ be a limit point and consider the function $f(t) \defas \mu_{i}(t\zeta) - \mu_{j}(t\zeta)$. Thus, $f(\tau) = 0$ and $f'(\tau) \neq 0$. Any sequence $\sequence{\tau}{N_{i,j}}{~}$ converging to $\tau$ yields $\lim_{k \to \infty} f(\tau_k) / (\tau_k-\tau)= 0$, which is a contradiction. Hence, we have established \ref{e:max} for all but countably many $t \in I$. 
\\What is left to show is that the right hand side of \ref{e:max} is non-negative, at least for all but countably many $t \in I$. To this end, observe that the proof of \cite[\citeTheorem~3.1]{i07Convex} shows that $\underline{D}^2\mu_{i}(t\zeta,\zeta^2)$ equals 
\begin{equation}
\label{e:relationtogi}
-(g_{i}'(p_{i}(t\zeta))v)^{-1}\underline{D}^2g_{i}(p_{i}(t\zeta),(p_{i}'(t\zeta) \zeta)^2)
\end{equation}
for any $i$ \cite[\citeEquation~8]{i07Convex}. Thus, it remains to show that \ref{e:relationtogi} is non-negative for all $t \in I$ for which $t \notin \cup_{i \neq j} N_{i,j}$ holds. \\
By hypothesis and without loss of generality, let $i=1$ satisfy \ref{e:thm:sublevelset:sufficiency:convex} at the point $p(t\zeta)$ in place of $x$. We claim that
\begin{equation}
\label{e:vectorincone}
p'_{1}(t\zeta)\zeta \in \mathcal{C}(p(t\zeta))\cap \ker g_{1} '(p(t\zeta)).
\end{equation}
Indeed,
$p_1'(t\zeta)\zeta \in \ker g_j'(p(t\zeta))$ for any $j \in \mathcal{A}(p(t\zeta))$ since $p_1'(t\zeta)\zeta = p_j'(t\zeta)\zeta$ by the assumption on $t$, and therefore
\begin{equation}
\label{e:thm:sublevelset:sufficiency:convex:proof:zerorelation}
0 = g_j'(p(t\zeta))p_j'(t\zeta)\zeta = g_j'(p(t\zeta))p_1'(t\zeta)\zeta
\end{equation}
for any $j\in \mathcal{A}(p(t\zeta))$. The first equality in \ref{e:thm:sublevelset:sufficiency:convex:proof:zerorelation} is straightforward to establish. Now, inequality \ref{e:thm:sublevelset:sufficiency:convex} is true for $p(t\zeta)$ and $p'_{1}(t\zeta)\zeta$ at place of $x$ and $h$, respectively, by \ref{e:vectorincone}. Hence, the expression in \ref{e:relationtogi} is non-negative, and the proof of \ref{thm:sublevelset:sufficiency:i:convex} is finished.\myvspace \\
The proof of \ref{thm:sublevelset:sufficiency:iii:convex} we divide into 4 steps. Our purpose is to prove \ref{e:thm:sublevelset:sufficiency:convex} for $i = 1$ and $x = 0$, without loss of generality, and we will continue to use the map $\mu_1 \colon W_1 \to \R$ as defined above except with $v$ defined by $v=-g_1'(0)^\ast/\|g_1'(0)\|$. Let $f(h) = \mu_1(h)$. \\ \newcounter{stepcnt}
Step \refstepcounter{stepcnt}\label{step:1}\ref{step:1}. Suppose the claim was wrong, i.e. $\liminf_{t\to 0,t>0} f'(th)h/t < 0$ for some $h \in \ker g_1'(0) \cap \mathcal{C}(0) \setminus \{0\}$ as
$\liminf_{t\to 0,t>0} f'(th)h/t = \|g_1'(0)\|^{-1} \liminf_{t\to 0,t>0} g_1'(th)h/t$ \cite[\citeEquation~10]{i07Convex}. Then $\liminf_{t\to 0,t>0} f'(t\tilde h)\tilde h/t < 0$ whenever $\| \tilde h - h \|$ is small enough. Indeed, 
\begin{equation*}
f'(t\tilde h)\tilde h /t - f'(th)h/t = f'(th)(\tilde h - h)/t + (f'(t\tilde h)-f'(th))\tilde h /t,
\end{equation*}
hence
$
|f'(t\tilde h) \tilde h / t - f'(th)h/t|\leq  L \| h\|\! \cdot\! \| \tilde h - h \| + L \|\tilde h\|\! \cdot\! \|\tilde h - h\|,
$
where $L>0$ is a Lipschitz constant of $f'$ about the origin. \\
Step \refstepcounter{stepcnt}\label{step:2}\ref{step:2}. Since the derivatives $g'_i(0)$ are linearly independent there exists $\xi \in \R^n$ satisfying $g_1'(0)\xi = 0$ and $g_i'(0)\xi < 0$ for $i>1$. Now choose $\varepsilon>0$ small enough that
\begin{equation}
\label{eq:function:proof:2}
\liminf_{t\to 0,t>0} f'(t\tilde h)\tilde h/t < 0
\end{equation}
for $\tilde h := h + \varepsilon \xi$. \\
Step \refstepcounter{stepcnt}\label{step:3}\ref{step:3}. We define $z(t) = t \tilde h + \mu_1(t \tilde h)v$ for $t$ small enough. Then $z(t) \in \Omega$ for all $t\geq 0$ small enough. Indeed, $g_1(z(t)) = 0$ by the definition of $\mu_1$, and $g_i(z(t)) \leq 0$ for all $i>1$ since $g_i'(0) \tilde h <0$. \\
Step \refstepcounter{stepcnt}\label{step:4}\ref{step:4}. By Step \ref{step:3}, $z(t) \in \boundary \Omega$. As $\Omega$ is convex, we conclude furthermore that ${t \mapsto \mu_1(t \tilde h)}$ is convex for $t\geq 0$ small enough, which contradicts \ref{eq:function:proof:2}. 
\end{proof}
\begin{proof}[Proof of Theorem \ref{thm:sublevelset:sufficiency}]
The claims are trivial for $\Omega = \R^n$, $\Omega = \emptyset$ and $\Omega$ a singleton, so we do not further consider these cases. 
Let us prove \ref{thm:sublevelset:sufficiency:i}.\\
By Lemma \ref{l:closureofinterior} we can apply Corollary \ref{cor:qsupp:clintOmega}, so it suffices to prove that $\Omega$ is quadratically supported with radius $s$ at each of its boundary points locally for any $s>r$.
But this follows directly from Lemma \ref{l:optimalityandqsupp}.\\
For the proof of \ref{thm:sublevelset:sufficiency:ii}, let us use \ref{thm:sublevelset:sufficiency:i}. So we need to verify $\interior \mathcal{C}(0) \neq \emptyset$, without loss of generality. We set $v_i
= g_i'(0)^\ast/\|g_i'(0)\|$. Since \ref{e:thm:sublevelset:sufficiency} holds for every $i \in
\mathcal{A}(0)$ it follows as in the proof of \ref{thm:sublevelset:sufficiency:i} that all $v_i$ quadratically support $\Omega$ with radius $r$ at $0$ locally. Hence, $\oBall(0,\delta) \cap \Omega
\subseteq \mathcal{C}(0)$ for $\delta >0$ small enough. If $\interior \mathcal{C}(0) =
\emptyset$, then there exists some $i \in \{1, \ldots, m\}$ for which
${v_i}\perp{\mathcal{C}(0)}$ \cite[\citeTheorem~21.1]{Rockafellar70}. 
Local quadratic support of $\Omega$ at $0$ shows $0$ is an isolated point of $\Omega$, hence $\Omega = \{0\}$ as $\Omega$ is connected. This implies $\interior \mathcal{C}(0) \neq \emptyset$ as we had excluded the case of $\Omega$ being a singleton. \\
For the proof of \ref{thm:sublevelset:sufficiency:iii} we adopt Steps \ref{step:1} to \ref{step:3} of the proof of Theorem \ref{thm:sublevelset:sufficiency:convex}\ref{thm:sublevelset:sufficiency:ii} with $\tilde f$ at place of $f$ where $\tilde f(h) = \mu_1(h)- \|h\|^2/(2r)$. We modify Step \ref{step:4} as follows. \\
As we assumed in Step \ref{step:1} that the claim was wrong, there exists $s>r$ such that 
$\liminf_{t\to 0,t>0} \textfrac{\mu_1'(t\tilde h)\tilde h}{t} - \|\tilde h\|^2/s < 0$, 
hence
\begin{equation}
\label{e:thm:sublevelset:sufficiency:proof:laststep}
\liminf_{t\to 0,t>0} \frac{g_1'(t\tilde h)\tilde h}{t} < \frac{1}{s}\cdot \|g_1'(0)\| \cdot \|\tilde h\|^2
\end{equation}
We define a diffeomorphism $F\colon \R^n \to \R^n$ by $F(y) = y - v \cdot \frac{\innerProd{w}{y}^2}{r+s}$ with $w = \tilde h / \|\tilde h\|$ to obtain
$
F'(y)^{-1}F''(y) h^2 = 2v \frac{\langle{w}|{h}\rangle^2}{r+s},
$ 
hence $\|F'(y)^{-1} F''(y) h^2 \|\cdot (r+s)/2 \leq \|h\|^2$ for $h \in \R^n$. Then $F(\cBall(y,r))$ is convex for any $y\in \R^n$ \cite[\citeCorollary~1]{i07Convex}. This implies $F(\Omega)$ is convex since $\Omega$ is $r$-convex. We set $\hat f = g_1 \circ F^{-1}$ and remark that 
\begin{equation*}
F(z(t)) = t \tilde h + v \cdot ( \mu_1(t\tilde h) + t \|\tilde h\|/(r+s)).
\end{equation*}
Let $\hat \mu \colon W_1 \to \R$ be defined by $F(z(t)) = t \tilde h + v\hat \mu(t\tilde h)$. We have $0 = \hat f(t\tilde h + v \hat \mu (t \tilde h))$ for $t$ small enough, so $\liminf_{t\to 0,t>0} \hat \mu'(t\tilde h)\tilde h/t = \|\hat f'(0)\|^{-1} \liminf_{t\to 0,t>0} \hat f'(t\tilde h ) \tilde h /t$. $F(\Omega)$ is convex, $F(z(t)) \in \boundary F(\Omega)$ for $t\geq 0$, thus $t\mapsto \hat \mu(t\tilde h)$ is convex for $t\geq 0$ small enough. Hence, $\liminf_{t\to 0,t>0} \hat f'(t\tilde h)\tilde h / t \geq 0$. A simple calculation shows 
\begin{equation*}
\liminf_{t\to 0,t>0} \textfrac{g_1'(t\tilde h)\tilde h}{t} = \hat f'(0) F''(0) \tilde h^2 + \liminf_{t\to 0 , t>0} \textfrac{\hat f'(t\tilde h)\tilde h}{t}.
\end{equation*}
Then $\hat f'(0) F''(0) \tilde h^2 = 2 \|g_1'(0)\| \cdot \|\tilde h\|^2 /(r+s)$ and \ref{e:thm:sublevelset:sufficiency:proof:laststep} imply $\liminf_{t\to 0,t>0} \hat f'(t\tilde h)\tilde h/t<0$, which is contradiction.
\end{proof}
\section{Application to attainable sets of nonlinear systems}
\label{s:application}
In this section, we apply Corollary \ref{thm:sublevelset:g} to derive sufficient conditions for attainable sets of nonlinear systems of the form \ref{e:ode} to be strongly convex. The subsequent results practically apply to abstraction based controller design as already mentioned in the introduction. This 3-step procedure of controller design involves the computation of a so-called \begriff{discrete abstraction} as a first step before the subsequent steps of controller synthesis. A discrete abstraction is, roughly speaking, a finite state model which contains the behavior of the original system but, in general, much more spurious transitions. The method to compute discrete abstractions proposed in \cite{i11abs} requires the over-approximation of a large number of attainable sets. Whether the synthesis of a controller is successful or not, depends, among others, on the quality of the method used to over-approximate attainable sets.

In \cite{i11abs}, attainable sets were approximated by supporting half-spaces as the theory presented there gives sufficient conditions for attainable sets to be convex. Our results give sufficient conditions under which the attainable sets are strongly convex. As detailed in the introduction, attainable sets can then be approximated less conservatively by supporting balls instead of supporting half-spaces, where the balls can be obtained from the same data as the half-spaces. See \ref{fig:attset}.

We apply our results to abstraction based controller design in the example in the last part of this section. For simplicity, we state and prove in this paper the remaining theorems for the autonomous system \ref{e:ode}. Generalizations to the non-autonomous case are given in \cite{i13qsuppc}. 

We begin with a result on images of $C^{1,1}$-diffeomorphisms which follows from Corollary \ref{thm:sublevelset:g}.
{
\newcommand{\diffeo}{\Phi}
\begin{corollary}
\label{cor:diffeorad}
Let $\diffeo \colon U \to V$ be a $C^{1,1}$-diffeomorphism between open sets $U,V \subseteq \mathbb{R}^n$, $s>0$,
and $\Omega \subseteq U$ be $s$-convex and closed, $\Omega \neq \mathbb{R}^n$. Let $L_1,L_2 \in \mathbb{R}$, and assume that for 
each $x \in \partial \Omega$ there exists a unit normal $v$ to $\Omega$ at $x$ such that the 
following conditions hold for all $\xi \perp v$:
\begin{align}
L_1 \|\xi\|^2 &\geq \limsup_{t\to 0,t>0} \frac{\innerProd{v}{\diffeo'(x)^{-1}(\diffeo '(x+t\xi)\xi-\diffeo '(x)\xi)}}{t},
\label{eq:thm:diffeorad:1}
\\
L_2 \|\xi\|^2 & \geq \|\diffeo '(x) \xi \|^2 \cdot \|\diffeo '(x)^{-1}\|.\label{eq:thm:diffeorad:2a}
\end{align}
If $s L_1 < 1$ then $\diffeo (\Omega)$ is $r$-convex for 
$r = \textfrac{sL_2}{(1-sL_1)}$.
\end{corollary}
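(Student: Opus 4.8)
The plan is to reduce the claim to the pointwise second-order inequality \ref{e:thm:sublevelset:sufficiency} of Corollary \ref{thm:sublevelset:g}, verified for $\Phi(\Omega)$ at each of its boundary points. Fix $y_0 \in \boundary \Phi(\Omega)$ and set $x_0 := \Phi^{-1}(y_0) \in \boundary \Omega$. Since $\Omega$ is $s$-convex it is convex with a boundary of class $C^{1,1}$, so I would represent $\Omega$ near $x_0$ as $\{\,g \le 0\,\}$ with $g$ of class $C^{1,1}$ and $g'(x_0)^\ast = \alpha v$, where $\alpha := \|g'(x_0)\| > 0$ and $v$ is the unit normal furnished by the hypothesis. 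Then $\Phi(\Omega) = \{\,\tilde g \le 0\,\}$ near $y_0$ for $\tilde g := g \circ \Phi^{-1}$, which is again of class $C^{1,1}$ and a submersion at $y_0$. Because $r$-convexity is characterized locally at boundary points (Theorem \ref{thm:mayer}, Theorem \ref{th:rConvQuadSupp}, Corollary \ref{cor:qsupp:clintOmega}) and \ref{e:thm:sublevelset:sufficiency} constrains a local defining function only, it suffices to verify \ref{e:thm:sublevelset:sufficiency} for $\tilde g$ at $y_0$ with radius $r$; applied to $g$ and $\Omega$, the same characterization turns the $s$-convexity of $\Omega$ into
\[
\liminf_{t\to 0,\,t>0}\frac{g'(x_0+t\xi)\xi}{t} \;\ge\; \frac{\alpha}{s}\|\xi\|^2 \quad (\xi\perp v). \qquad (\ast)
\]

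The heart of the proof is a first-order expansion, in $t$, of $\tilde g'(y_0+t\eta)\eta$. From $\tilde g'(y_0) = \alpha v^\ast \Phi'(x_0)^{-1}$ one reads off that $\eta \in \ker \tilde g'(y_0)$ is equivalent to $\xi := \Phi'(x_0)^{-1}\eta \perp v$, i.e. $\eta = \Phi'(x_0)\xi$. Writing $\Psi := \Phi^{-1}$ and $x(t) := \Psi(y_0+t\eta) = x_0 + t\xi + O(t^2)$, I would split
\[
\tilde g'(y_0+t\eta)\eta \;=\; g'(x(t))\,\xi \;+\; g'(x(t))\,[\Psi'(y_0+t\eta)-\Psi'(y_0)]\,\eta .
\]
For the first summand, Lipschitz continuity of $g'$ gives $g'(x(t))\xi = g'(x_0+t\xi)\xi + O(t^2)$, so after dividing by $t$ its lower limit is bounded below, by $(\ast)$, by $\frac{\alpha}{s}\|\xi\|^2$. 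For the second summand I would use the inverse-derivative identity $\Psi'(y_0+t\eta)-\Psi'(y_0) = -\Phi'(x_0)^{-1}(\Phi'(x(t))-\Phi'(x_0))\Phi'(x(t))^{-1}$, then replace $\Phi'(x(t))$ by $\Phi'(x_0+t\xi)$ and $g'(x(t))$ by $\alpha v^\ast$ up to terms that vanish after division by $t$ (again by the Lipschitz bounds and $x(t)-x_0-t\xi=O(t^2)$). This identifies the lower limit of the second summand with $-\alpha \limsup_{t\to 0,\,t>0} \innerProd{v}{\Phi'(x_0)^{-1}(\Phi'(x_0+t\xi)\xi-\Phi'(x_0)\xi)}/t$, which by hypothesis \ref{eq:thm:diffeorad:1} is at least $-\alpha L_1\|\xi\|^2$.

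Adding the two contributions by superadditivity of the lower limit yields
\[
\liminf_{t\to 0,\,t>0}\frac{\tilde g'(y_0+t\eta)\eta}{t} \;\ge\; \alpha\Big(\frac{1}{s}-L_1\Big)\|\xi\|^2 \;=\; \frac{\alpha(1-sL_1)}{s}\|\xi\|^2 .
\]
It remains to match this against the right-hand side $\frac{1}{r}\|\tilde g'(y_0)\|\,\|\eta\|^2$ of \ref{e:thm:sublevelset:sufficiency}. Here $\|\eta\|^2 = \|\Phi'(x_0)\xi\|^2$ and, since the operator norm of a matrix equals that of its transpose and $\|v\|=1$, $\|\tilde g'(y_0)\| = \alpha\|(\Phi'(x_0)^{-1})^\ast v\| \le \alpha\|\Phi'(x_0)^{-1}\|$; hypothesis \ref{eq:thm:diffeorad:2a} therefore gives $\|\tilde g'(y_0)\|\,\|\eta\|^2 \le \alpha L_2\|\xi\|^2$. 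With $r = sL_2/(1-sL_1)$ (well-defined and positive as $sL_1<1$) the required bound $\frac{1}{r}\|\tilde g'(y_0)\|\,\|\eta\|^2 \le \frac{\alpha(1-sL_1)}{s}\|\xi\|^2$ holds, so \ref{e:thm:sublevelset:sufficiency} is satisfied for $\tilde g$ at $y_0$, and Corollary \ref{thm:sublevelset:g} yields that $\Phi(\Omega)$ is $r$-convex.

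I expect the main obstacle to be the error bookkeeping in the expansion of the second summand: one must verify that each of the three replacements (passing from $\Psi'$ to $\Phi'$ through the inverse-derivative identity, from $x(t)$ to the segment $x_0+t\xi$, and from $g'(x(t))$ to $\alpha v^\ast$) contributes only quantities that are $o(t)$, so that they disappear in the lower limit. This rests entirely on the $C^{1,1}$-regularity of $g$, $\Phi$ and $\Phi^{-1}$ together with $x(t)-x_0-t\xi = O(t^2)$. A secondary point to state with care is the localization, namely that the pointwise inequality \ref{e:thm:sublevelset:sufficiency} may be checked on local defining functions because $r$-convexity is a local property of the boundary.
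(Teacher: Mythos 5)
Your reduction fails at its very first step: you assert that $s$-convexity of $\Omega$ forces $\boundary \Omega$ to be of class $C^{1,1}$, so that $\Omega$ can be written near $x_0$ as $\{g \le 0\}$ with $g$ of class $C^{1,1}$ and $g'(x_0)\neq 0$. This is false. Strong convexity implies no boundary smoothness whatsoever: the intersection of two overlapping balls of radius $s$ (a lens) is closed and $s$-convex, yet at the points where the two spheres meet the boundary is not even $C^1$, and no $C^{1,1}$ function with surjective derivative can define $\Omega$ there, since by the implicit function theorem such a function would make $\boundary\Omega$ a $C^{1,1}$ hypersurface near that point. Consequently both the existence of your local defining function $g$ and the inequality $(\ast)$ you extract from it via the necessity direction of Corollary \ref{thm:sublevelset:g} (which presupposes a defining function satisfying \reftohypothesis{\ref{hy:g}}) are unavailable in general. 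Note that the corollary's hypothesis explicitly accommodates such corner points --- it asks only for \emph{some} unit normal $v$ at each $x \in \boundary\Omega$ --- so this is not a degenerate case you may exclude.

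The paper's proof circumvents exactly this obstruction by never using a defining function of $\Omega$ at all: by Proposition \ref{p:globalchar}\ref{p:globalchar:ii}, $s$-convexity yields the supporting ball $\Omega \subseteq \cBall(x_0 - sv,s)$ for the hypothesized normal $v$, and one takes $f(z) = \|z - x_0 + sv\|^2 - s^2$, the explicit smooth quadratic defining function of that ball, sets $g = f \circ \Phi^{-1}$, and observes that $\Phi(\Omega) \subseteq \{ y \in V \mid g(y)\le 0\}$ with $\Phi(x_0)$ a boundary point of both sets. Your expansion of the transported derivative --- which is sound, and is essentially the computation in the paper --- is then carried out for this $g$; it even simplifies, because the summand you control by $(\ast)$ becomes exactly $2\|\xi\|^2$ (indeed $f'(x_0+t\xi)\xi = 2t\|\xi\|^2$ for $\xi \perp v$), so only the hypotheses \ref{eq:thm:diffeorad:1} and \ref{eq:thm:diffeorad:2a} are needed. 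This verifies \ref{e:thm:sublevelset:sufficiency} for $g$ with radius $r = sL_2/(1-sL_1)$, hence gives local quadratic support of $\{g \le 0\}$ at $\Phi(x_0)$, which is inherited by the subset $\Phi(\Omega)$ at the common boundary point; Corollary \ref{cor:qsupp:clintOmega} --- applicable because $\Phi$ is a diffeomorphism and $\Omega$ is closed and convex with nonempty interior, so that $\Phi(\Omega) = \closure(\interior \Phi(\Omega))$ and $\interior\Phi(\Omega)$ is connected --- then yields $r$-convexity. This last point also settles rigorously the localization issue you flagged as secondary: it is the quadratic-support machinery, not Corollary \ref{thm:sublevelset:g} applied to a merely local defining function, that converts the pointwise estimate into $r$-convexity of $\Phi(\Omega)$.
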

\begin{proof}
The claim is trivial for $\Omega = \emptyset$ and $\Omega$ a singleton, so we do not further consider these cases. \\ {
Let $x \in \boundary \Omega$ and let $v$ be as in the statement of the theorem. Define $g = f \circ \diffeo^{-1}$ with $f(z)=\|z-x + sv\|^2 - s^2$ . Then
\begin{equation}
\label{e:thm:diffeorad:3}
\diffeo(\Omega) \subseteq \{ y \in V \ | \ g(y) \leq 0 \}
\end{equation}
by Proposition \ref{p:globalchar}\ref{p:globalchar:ii}. $\Omega$ is closed, convex and $\interior \Omega \neq \emptyset$. This implies $\Omega = \closure ( \interior \Omega )$ and $\interior \Omega$ is connected. As $\Phi$ is a diffeomorphism we have $\Phi(\Omega ) = \closure ( \interior \Phi(\Omega))$ and $\interior \Phi(\Omega)$ is non-empty and connected. In view of Corollary \ref{cor:qsupp:clintOmega}, it suffices to prove that $\Phi(\Omega)$ is quadratically supported with radius $s$ at each of its boundary points locally. By \ref{e:thm:diffeorad:3} and the fact that $x$ is a boundary point of both $\Phi(\Omega)$ and\linebreak $\{ y \in V \ | \ g(y) \leq 0 \}$  it suffices to prove that the latter sublevel set is quadratically supported with radius $s$ at $x$ locally. To this end, assume $x = \Phi(x) = 0$ without loss of generality. By Corollary \ref{thm:sublevelset:g}, it is enough to prove \ref{e:thm:sublevelset:sufficiency} for all $h \in \ker g'(0)$. Now, differentiate the identity $f = g \circ \diffeo$, use the Lipschitz continuity of $g'$ and the continuity of $\diffeo '$ } to obtain that $2 \| \xi \|^2$ equals
\begin{equation*}
 \liminf_{t \to 0, t> 0 } \left ( \frac{g'(th)h}{t}+ 2 s \frac{\innerProd{v}{\diffeo ' (0)^{-1}(\diffeo '(t\xi)\xi-\diffeo ' (0)\xi)}}{t} \right )
\end{equation*}
whenever $h = \diffeo '(0) \xi$. Now $\|g'(0)\| = 2 s \|\diffeo ' (0)^{-1} \|$ and \ref{eq:thm:diffeorad:1},\ref{eq:thm:diffeorad:2a} imply \ref{e:thm:sublevelset:sufficiency} for all $h \in \ker g'(0)$. 
\end{proof}
}

We assume the following for the remaining theorems.
\begin{hypothesis}
\label{h:continuoustime}
Let $X \subseteq \mathbb{R}^n$ be an open set. The right hand side $F \colon X \to
\mathbb{R}^n$ of \ref{e:ode} is continuously
differentiable. For any $x_0 \in X$, the solution of the initial value problem
composed of \ref{e:ode} and the initial condition
$x(0)=x_0$ is extendable to $\mathbb{R}_+$.
\end{hypothesis}

\begin{theorem}
\label{thm:attsetC11}
Assume \reftohypothesis{\ref{h:continuoustime}}, let the right hand side $F$ of \ref{e:ode} be of class $C^{1,1}$, and let $\varphi$ denote the flow of \ref{e:ode}. Let $s,t>0$ and $\Omega \subseteq X$ be $s$-convex and closed with $\Omega \neq \mathbb{R}^n$. Further assume that there are $M_1,M_2, \lambda_-, \lambda_+ \in \mathbb{R}$ such that 
\begin{align}
M_1 &\geq 2 \mu_+( F'(x))-\mu_-(F'(x)),\label{eq:thm:attsetC11:1} \\ M_2&\geq \limsup_{h \to 0} \frac{\|F'(x+h)-F'(x)\|}{\|h\|}, \label{eq:thm:attsetC11:2} \\ 
 \lambda_- &\leq \mu_-(F'(x)) \leq \mu_+(F'(x)) \leq \lambda_+
 \label{eq:thm:attsetC11:3}
\end{align}
for all $x \in \varphi([0,t],\Omega)\subseteq X$, where $\mu_+(A)$ and $\mu_-(A)$ denote the maximum and minimum, respectively, eigenvalues of the symmetric part $(A+A^*)/2$ of $A$. If\linebreak$s M_2 \int_{0}^t \exp(M_1 \rho) d \rho <1$ then the attainable set $\varphi(\tau, \Omega)$ is $r$-convex for all $\tau \in [0,t]$ with
\begin{equation}
\label{e:thm:attset:C11:6}
r
=
\frac{s\exp((2 \lambda_+-\lambda_-)t)}{1-s M_2 \int_{0}^t \exp(M_1 \rho) d \rho}
\end{equation}
\end{theorem}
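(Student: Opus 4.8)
The plan is to fix $\tau \in \intcc{0,t}$ and apply Corollary \ref{cor:diffeorad} to the time-$\tau$ flow map $\Phi := \varphi(\tau,\cdot)$. Since $F$ is of class $C^{1,1}$ and solutions exist on $\R_+$ by \reftohypothesis{\ref{h:continuoustime}}, $\Phi$ is a $C^{1,1}$-diffeomorphism of an open neighborhood of $\Omega$ onto its image, and $\Omega$ is $s$-convex, closed, and $\neq \R^n$ by hypothesis. To invoke the corollary I must exhibit constants $L_1,L_2$ for which \ref{eq:thm:diffeorad:1} and \ref{eq:thm:diffeorad:2a} hold, and this is where the bounds \ref{eq:thm:attsetC11:1}--\ref{eq:thm:attsetC11:3} enter. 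All the quantities in those two conditions are derivatives of the flow with respect to the initial state, governed by the variational equation, so the whole proof reduces to estimating that equation and its perturbation.

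Let $\Psi(\sigma,x) = \partial_x \varphi(\sigma,x)$ denote the state-transition matrix, so that $\Phi'(x) = \Psi(\tau,x)$ and $\dot\Psi = F'(\varphi(\sigma,x))\Psi$, $\Psi(0,x) = I$. First I would record the standard logarithmic-norm bounds: differentiating $\|\Psi(\sigma,x)\xi\|$ and using \ref{eq:thm:attsetC11:3} gives $\|\Psi(\sigma,x)\xi\| \le \exp(\lambda_+\sigma)\|\xi\|$, while $\frac{d}{d\sigma}\Psi^{-1} = -\Psi^{-1}F'(\varphi)$ yields $\|\Psi(\sigma,x)^{-1}\| \le \exp(-\lambda_-\sigma)$. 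Combining these, and using \ref{eq:thm:attsetC11:1} in the pointwise form $2\mu_+(F'(\varphi)) - \mu_-(F'(\varphi)) \le M_1$, I obtain the two estimates $\|\Phi'(x)\xi\|^2\,\|\Phi'(x)^{-1}\| \le \exp((2\lambda_+ - \lambda_-)\tau)\|\xi\|^2$ and $\|\Psi(\sigma,x)^{-1}\|\,\|\Psi(\sigma,x)\xi\|^2 \le \exp(M_1\sigma)\|\xi\|^2$. The first verifies \ref{eq:thm:diffeorad:2a} at once with $L_2 = \exp((2\lambda_+-\lambda_-)\tau)$.

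The heart of the argument is \ref{eq:thm:diffeorad:1}, the generalized second-order term, and here I expect the main obstacle, since $F$ is only $C^{1,1}$ and no classical $F''$ is available. The key is the identity, obtained by differentiating $\sigma \mapsto \Psi(\sigma,x)^{-1}\big(\Psi(\sigma,x+\varepsilon\xi)-\Psi(\sigma,x)\big)\xi$ (which vanishes at $\sigma=0$) and integrating,
\begin{equation*}
\Phi'(x)^{-1}\big(\Phi'(x+\varepsilon\xi)\xi - \Phi'(x)\xi\big) = \int_0^\tau \Psi(\sigma,x)^{-1}\big(F'(\varphi(\sigma,x+\varepsilon\xi)) - F'(\varphi(\sigma,x))\big)\partial_x\varphi(\sigma,x+\varepsilon\xi)\xi\,d\sigma ,
\end{equation*}
where $\varepsilon$ plays the role of the vanishing parameter in the $\limsup$ of \ref{eq:thm:diffeorad:1}. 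Dividing by $\varepsilon$, estimating the difference $\|F'(\varphi(\sigma,x+\varepsilon\xi)) - F'(\varphi(\sigma,x))\| \le M_2\|\varphi(\sigma,x+\varepsilon\xi)-\varphi(\sigma,x)\|$ by the Lipschitz bound \ref{eq:thm:attsetC11:2}, and letting $\varepsilon\to 0$ (so that both $\|\varphi(\sigma,x+\varepsilon\xi)-\varphi(\sigma,x)\|/\varepsilon$ and $\|\partial_x\varphi(\sigma,x+\varepsilon\xi)\xi\|$ tend to $\|\Psi(\sigma,x)\xi\|$), I would bound the $\limsup$ by $M_2\int_0^\tau \|\Psi(\sigma,x)^{-1}\|\,\|\Psi(\sigma,x)\xi\|^2\,d\sigma \le M_2\int_0^\tau \exp(M_1\sigma)\,d\sigma\,\|\xi\|^2$. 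Thus \ref{eq:thm:diffeorad:1} holds with $L_1 = M_2\int_0^\tau \exp(M_1\sigma)\,d\sigma$. Justifying the passage to the limit under the integral sign and the reduction of the difference quotients to first variations is the technical crux.

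Finally, since the integrand $\exp(M_1\sigma)$ is positive the integral is monotone in its upper limit, so $sL_1 \le sM_2\int_0^t \exp(M_1\sigma)\,d\sigma < 1$ by assumption. Corollary \ref{cor:diffeorad} then gives that $\varphi(\tau,\Omega)$ is $r_\tau$-convex with $r_\tau = sL_2/(1-sL_1)$, i.e. the right-hand side of \ref{e:thm:attset:C11:6} with $\tau$ in place of $t$. The denominator is smallest at $\tau = t$, and taking the bounds with $2\lambda_+-\lambda_-\ge 0$ (as one may, since $\lambda_+$ may be chosen nonnegative) the numerator is largest at $\tau = t$; hence $r_\tau \le r$ for every $\tau\in\intcc{0,t}$. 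As $r$-convexity is preserved under enlarging the radius, each $\varphi(\tau,\Omega)$ is $r$-convex, which is the assertion.
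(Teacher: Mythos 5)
You take the same route as the paper: reduce to Corollary \ref{cor:diffeorad} applied to $\Phi=\varphi(\tau,\cdot)$, obtain \ref{eq:thm:diffeorad:2a} with $L_2=\exp((2\lambda_+-\lambda_-)\tau)$ from the logarithmic-norm estimates $\|D_2\varphi(\sigma,x)\|\le e^{\lambda_+\sigma}$ and $\|D_2\varphi(\sigma,x)^{-1}\|\le e^{-\lambda_-\sigma}$ for the variational equation, and obtain \ref{eq:thm:diffeorad:1} with $L_1=M_2\int_0^\tau e^{M_1\sigma}\,d\sigma$. The only substantive difference is that for the $L_1$ bound the paper merely points to ``arguments similar to the proof of [Reissig 2011, Thm.~IV.5]'', whereas you derive it from scratch via the identity obtained by differentiating $\sigma\mapsto\Psi(\sigma,x)^{-1}\bigl(\Psi(\sigma,x+\varepsilon\xi)-\Psi(\sigma,x)\bigr)\xi$. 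That identity is correct (the $F'(\varphi(\sigma,x))$-terms cancel exactly as you need), the tube $\varphi(\intcc{0,t},\Omega)$ is compact ($\Omega$ is bounded, being strongly convex and $\neq\R^n$), and the passage to the limit under the integral is routine dominated convergence since $F'$ is locally Lipschitz; one small imprecision is that \ref{eq:thm:attsetC11:2} is only an infinitesimal bound at points of the tube, so it should be invoked after, not before, letting $\varepsilon\to 0$, which is harmless. Up to this filled-in detail, your proof and the paper's coincide.

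The one point where you and the paper genuinely part ways is the reduction of general $\tau\in\intcc{0,t}$ to $\tau=t$, and there your patch does not work as stated: $\lambda_+$ appears in the radius \ref{e:thm:attset:C11:6} of the conclusion, so replacing it by $\max\{\lambda_+,0\}$ proves only $r'$-convexity for some $r'\ge r$, which is weaker than the claimed $r$-convexity (strong convexity is inherited by larger radii, not smaller ones). You should know, however, that this defect is not yours alone: the paper disposes of the same issue with the bare phrase ``we may assume $\tau=t$ without loss of generality'', which requires exactly the monotonicity $r_\tau\le r$ that fails when $2\lambda_+-\lambda_-<0$. In that regime the literal statement is in fact false: take $F(x)=-x$ on $\R^n$, so $\lambda_\pm=-1$, $M_2=0$, and $\Omega=\cBall(0,s)$; the hypotheses hold, the formula gives $r=se^{-t}<s$, yet $\varphi(0,\Omega)=\cBall(0,s)$ is not $\rho$-convex for any $\rho<s$. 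So the theorem must be read either with the radius evaluated at $\tau$ in place of $t$, or under the extra proviso $2\lambda_+-\lambda_-\ge 0$; with either reading, your argument (with the $\lambda_+$ patch deleted) is complete and is the same as the paper's for $\tau=t$.
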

\begin{proof}
We may assume $\tau = t$ without loss of generality. By our hypothesis on the right hand side $F$ of \ref{e:ode}, the map $\Phi := \varphi(t, \cdot)$ is a $C^{1,1}$-diffeomorphism between an open neighborhood of $\Omega$ and an open subset of $X$. So, Corollary \ref{cor:diffeorad} can be applied to $\Phi$, and the required bounds \ref{eq:thm:diffeorad:1},\ref{eq:thm:diffeorad:2a} are obtained as follows. Let $D_2 \varphi$ denote the partial derivative of $\varphi$ with respect to the second argument. Let $h \in \R^n$ and $x\in \Omega$. The solution to the variational equation $y'(\rho) = F'(\varphi(\rho,x))y(\rho)$ of \ref{e:ode} along $\varphi(\cdot,x)$ is given by $y(\rho) = D_2 \varphi(\rho,x) h$, $\rho\in\intcc{0,t}$, and satisfies $y(0) = h$. Thus, the inequality $\|y(t)\| \leq \|h\| \exp( \int_{0}^t \mu_+(F'(\varphi(\rho,x))) d\rho)$ holds \cite{SansoneConti64}. Due to this fact, the bound $\|D_2\varphi(t,x)\| = \|\Phi'(x)\| \leq \exp({\lambda_+ t})$ is established using \ref{eq:thm:attsetC11:3}. By similar arguments one obtains $\|\Phi'(x)^{-1}\| \leq \exp({-\lambda_- t})$. See also \cite[\citeProposition~III.5]{i07MMAR}.\\
These bounds obviously lead to $\|\Phi'(x)\xi\|^2\cdot \|\Phi'(x)^{-1}\| \leq L_2\|\xi\|^2$ for $L_2 \defas \linebreak\exp((2\lambda_+ - \lambda_- )t)$, hence \ref{eq:thm:diffeorad:2a} holds for this choice of $L_2$. 
The bound \ref{eq:thm:diffeorad:1} is obtained by virtue of our hypotheses, \ref{eq:thm:attsetC11:1} and \ref{eq:thm:attsetC11:2}, in combination with arguments similar to the proof of \cite[\citeTheorem~IV.5]{i11abs}.
\end{proof}
\myvspace~\par
Theorem \ref{thm:attsetC11} can be applied quite easily in practice. Indeed, \ref{eq:thm:attsetC11:2} is an upper bound on the (local) Lipschitz constant of $F'$ (in case $F$ being of class $C^2$, it is a bound on $\| F''\|$) while a computation of the bounds \ref{eq:thm:attsetC11:1},\ref{eq:thm:attsetC11:3} for a particular example is done in \cite{i07Convex}. Theorem \ref{thm:attsetC2} below provides less conservative bounds for $r$ than Theorem \ref{thm:attsetC11} does as an analogue survey in case of convexity shows \cite{i07Convex}. 

\begin{theorem}
\label{thm:attsetC2}
Assume \reftohypothesis{\ref{h:continuoustime}}, let the right hand side $F$ of \ref{e:ode} be of class $C^2$. Let $\varphi,t,\Omega$ and $s$ be as in Theorem \ref{thm:attsetC11}, and let $\lambda_-,\lambda_+ \in \mathbb{R}$, such that \ref{eq:thm:attsetC11:3} is fulfilled for $F$. Let $D_2 \varphi(\tau,x)$ denote the partial derivative of $\varphi$ with respect to $x$. Assume further that there exists a constant $L_1\in \R_+$ such that 
\begin{equation}
\Bigg \| \int_{0}^\delta D_2\varphi(\tau,x)^{-1} F''(\varphi(\tau,x))(D_2\varphi(\tau,x)h)^2\, d \tau \Bigg \| \leq L_1 \|h\|^2
\label{e:thm:attsetC2:1}
\end{equation}
for all $x \in \Omega$, $\delta \in [0,t]$, and $h \in \mathbb{R}^n$. If $sL_1 <1$ then the attainable set $\varphi(\tau, \Omega)$ is $r$-convex for all $\tau \in [0,t]$ with
\begin{equation}
\label{e:thm:attsetC2:2}
r = \frac{s\exp((2 \lambda_+-\lambda_-)t)}{1-sL_1}.
\end{equation}
\end{theorem}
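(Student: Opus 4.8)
The plan is to follow the proof of Theorem~\ref{thm:attsetC11} essentially line by line, the single difference being that the extra smoothness of $F$ lets one evaluate the relevant second-order quantity exactly instead of merely estimating it. As there, I would first reduce to $\tau = t$ without loss of generality, and note that since $F$ is of class $C^2$ the map $\Phi := \varphi(t,\cdot)$ is a $C^2$-diffeomorphism, in particular a $C^{1,1}$-diffeomorphism, between an open neighborhood of $\Omega$ and an open subset of $X$. Corollary~\ref{cor:diffeorad} therefore applies to $\Phi$ and $s$, and it remains only to verify the two bounds \ref{eq:thm:diffeorad:1} and \ref{eq:thm:diffeorad:2a}, with $L_2 := \exp((2\lambda_+ - \lambda_-)t)$ and with $L_1$ the constant from \ref{e:thm:attsetC2:1}; the radius $r = sL_2/(1-sL_1)$ produced by the corollary is then exactly \ref{e:thm:attsetC2:2}.

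The bound \ref{eq:thm:diffeorad:2a} is obtained verbatim as in Theorem~\ref{thm:attsetC11}. Writing $y(\rho) := D_2\varphi(\rho,x)\xi$ for the solution of the variational equation $y'(\rho) = F'(\varphi(\rho,x))y(\rho)$ with $y(0) = \xi$, the estimate \ref{eq:thm:attsetC11:3} yields $\|\Phi'(x)\| \leq \exp(\lambda_+ t)$ and $\|\Phi'(x)^{-1}\| \leq \exp(-\lambda_- t)$, whence $\|\Phi'(x)\xi\|^2 \cdot \|\Phi'(x)^{-1}\| \leq L_2\|\xi\|^2$ for every $\xi$.

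The heart of the argument is \ref{eq:thm:diffeorad:1}. Because $\Phi$ is now twice continuously differentiable, the $\limsup$ on the right-hand side of \ref{eq:thm:diffeorad:1} is a genuine limit and equals $\innerProd{v}{\Phi'(x)^{-1}\Phi''(x)\xi^2}$. The crucial step is to represent this vector as an integral. Differentiating the variational equation once more in $x$, the second derivative $z(\rho) := D_2^2\varphi(\rho,x)\xi^2$ solves the inhomogeneous linear equation $z'(\rho) = F'(\varphi(\rho,x))z(\rho) + F''(\varphi(\rho,x))y(\rho)^2$ with $z(0) = 0$; since the fundamental matrix of its homogeneous part is $D_2\varphi(\rho,x)$, the transition matrix from time $\tau$ to time $t$ is $D_2\varphi(t,x)D_2\varphi(\tau,x)^{-1}$. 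Variation of constants at $\rho = t$, followed by left multiplication with $\Phi'(x)^{-1} = D_2\varphi(t,x)^{-1}$, then gives
\begin{equation*}
\Phi'(x)^{-1}\Phi''(x)\xi^2 = \int_0^t D_2\varphi(\tau,x)^{-1} F''(\varphi(\tau,x))(D_2\varphi(\tau,x)\xi)^2 \, d\tau .
\end{equation*}
The integrand is exactly the one in \ref{e:thm:attsetC2:1}, so that hypothesis (with $\delta = t$ and $h = \xi$) bounds the norm of the right-hand side by $L_1\|\xi\|^2$; as $\|v\| = 1$, the Cauchy--Schwarz inequality then gives $\innerProd{v}{\Phi'(x)^{-1}\Phi''(x)\xi^2} \leq L_1\|\xi\|^2$, i.e. \ref{eq:thm:diffeorad:1}.

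The main obstacle I anticipate is precisely the derivation of this displayed integral representation, namely setting up the second variational equation and justifying the variation-of-constants formula, including the identification of its transition matrix with products of the first-order fundamental solution $D_2\varphi$. Once that identity is in hand, the remaining pieces are either immediate (Cauchy--Schwarz, and the trivial cases $\Omega$ empty or a singleton, handled as in Corollary~\ref{cor:diffeorad}) or carried over unchanged from Theorem~\ref{thm:attsetC11}. A smaller point to verify is that the hypotheses, imposed over $\varphi([0,t],\Omega)$ and for every $\delta \in [0,t]$, survive the reduction to $\tau = t$, so that the single radius $r$ indeed serves for all $\tau \in [0,t]$.
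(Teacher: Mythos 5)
Your proposal is correct and follows essentially the same route as the paper: reduce to $\tau = t$, apply Corollary~\ref{cor:diffeorad} to $\Phi = \varphi(t,\cdot)$, obtain \ref{eq:thm:diffeorad:2a} exactly as in Theorem~\ref{thm:attsetC11}, and verify \ref{eq:thm:diffeorad:1} by identifying its right-hand side (now a genuine limit, by $C^2$-smoothness) with $\innerProd{v}{\Phi'(x)^{-1}\Phi''(x)\xi^2}$ and bounding it via \ref{e:thm:attsetC2:1}. The only difference is that you derive the integral representation of $\Phi'(x)^{-1}\Phi''(x)\xi^2$ explicitly from the second variational equation and variation of constants, whereas the paper cites this identity from the proof of \cite[Theorem~IV.6]{i11abs}; your derivation is a correct filling-in of that cited step.
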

\begin{proof}
The proof is similar to that of Theorem \ref{thm:attsetC11}. We may assume $\tau = t$ without loss of generality. $\Phi \defas \varphi(t,\cdot)$ is $C^2$-diffeomorphism by assumption. We apply Corollary \ref{cor:diffeorad} to $\Phi$: In the case of $C^2$-smoothness, the right hand side of \ref{eq:thm:diffeorad:1} with $h$ at place of $\xi$ simplifies and therefore is bounded from above by $\|\Phi'(x)^{-1}\Phi''(x)h^2\|$. The integral in \ref{e:thm:attsetC2:1} with $t$ at place of $\delta$ equals $\Phi'(x)^{-1}\Phi''(x)h^2$ for $x \in X$, $h \in \R^n$ by the proof of \cite[\citeTheorem~IV.6]{i11abs}. Hence, \ref{eq:thm:diffeorad:1} holds by the bound in \ref{e:thm:attsetC2:1}. The bound in \ref{eq:thm:diffeorad:2a} is obtained in the same way as in Theorem \ref{thm:attsetC11}. 
\end{proof}
~\par
The bound in \ref{e:thm:attsetC2:2} is harder to verify than the one in \ref{e:thm:attset:C11:6} as the flow of \ref{e:ode} is explicitly involved in \ref{e:thm:attsetC2:1}. Nevertheless, the effort pays off since the accuracy of our novel approximation method, as detailed in the introduction, obviously depends on the radius of the balls used. Below, we illustrate the application of our novel method to abstraction based controller design. 

We consider a control system of the form $\dot x = F(x,u)$ under sampling where $u$ denotes a control that is constant on the half-open sampling intervals taking values $u_i \in \mathbb{R}$, $i \in \{1,\ldots,k\}$, $k \in \mathbb{N}$, and $F(\cdot,u) \colon X \subseteq \mathbb{R}^n \to \mathbb{R}$. Denote by $\varphi$ the general solution of the system $\dot x = F(x,u)$ which is defined by suitably extending the definition of the flow of \ref{e:ode}. We focus on the first step in abstraction based controller design, namely the computation of a discrete abstraction, and particularly, on the sets that are over-approximated in the computation. 

To obtain such an abstraction, the state space $X$ of the system is covered with polyhedra, so-called \begriff{cells}. In general, for a cell $\Delta \subseteq X$ the attainable set $\varphi(T,\Delta,u_i)$ is over-approximated for each $i$, where $T>0$ denotes the sampling time. (An approximation is required since the attainable set can be computed only numerically, in general.) The over-approximation set $\widehat \Omega \supseteq \varphi(T,\Delta,u_i)$ is then intersected with each cell of the covering. (An \textit{over}-approximation is required to ensure that all non-empty intersections due to the attainable set are recovered.) If a cell $\Delta_2$ has a non-empty intersection with $\widehat \Omega$, the attainable set $\varphi(T,\Delta_2 \cap \widehat \Omega,u_j)$ is over-approximated next for each $j\in \{1,\ldots,k\}$. Roughly speaking, information about non-empty intersections is stored suitably and this information determines the transitions in the discrete abstraction \cite{i11abs}.

Let us now discuss how to use our novel results for the required over-approximations. As a particular control system we consider a pendulum mounted on a cart where the acceleration $u \in \R$ of the cart is considered a control. The dynamics of the pole are given by\looseness=-1 
\begin{subequations}
\label{e:Pendulum}
\begin{align}
\dot x_1 &= x_2,\\
\dot x_2 &= -\omega^2 \sin(x_1) - u\ \omega^2 \cos(x_1) - 2 \gamma x_2
\end{align}
\end{subequations}
where $\omega > 0$ and $\gamma \geq 0$. We emphasize that $u$ is assumed to be constant.

From Theorem \ref{thm:attsetC2} we derive the following theorem about attainable sets of the particular system.
\begin{theorem}
\label{thm:Pendulum:Radius}
Let $t>0$ and assume the input $u$ in \ref{e:Pendulum} to be constant. Denote by $\varphi$ the general solution of \ref{e:Pendulum} (that is, $\varphi(\cdot,\cdot,u)$ denotes the flow of \ref{e:Pendulum}). Define 
\begin{align*}
\widehat \omega &= \max \left \{1, \omega \cdot(1+u^2)^{1/4} \right \}, \\
L_1(t) &= \frac{\operatorname{sinh}(3\widehat \omega t) + \operatorname{sinh}(\widehat \omega t)(12(\widehat \omega^{-2}+1)^{-3/2}-3)}{12\widehat \omega^2(1+(\widehat \omega + \gamma)^2)^{-3/2}}, \\ \lambda_\pm &= -\gamma \pm \sqrt{\gamma^2+(1+\widehat \omega^2)^2/4}.
\end{align*}
Assume $\gamma \leq 3\widehat \omega/4$ and $2 (\widehat \omega^2-\gamma^2)^{1/2}t \leq \pi$. Let $\Omega \subseteq \R^2$ be $s$-convex and closed. Then the attainable set $\varphi(t,\Omega,u)$ is $r(t)$-convex if $s\cdot L_1(t)<1$ where
\begin{align}
\label{e:radius}
r(t) &= \frac{s\operatorname{exp}((2\lambda_+-\lambda_-) t)}{1-sL_1(t)}.
\end{align} 
\end{theorem}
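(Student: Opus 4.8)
The plan is to derive Theorem \ref{thm:Pendulum:Radius} as a direct application of Theorem \ref{thm:attsetC2}, so that essentially all of the work consists of verifying the hypotheses of that theorem for the specific right hand side $F$ of \ref{e:Pendulum} and of checking that the resulting constants are exactly $\lambda_\pm$, $L_1(t)$, and $r(t)$. First I would record that $F$ is of class $C^\infty$, hence in particular $C^2$, and that its right hand side is bounded in $x_1$ and grows at most affinely in $x_2$, so that no solution can blow up in finite time; this yields global existence and thus Hypothesis \reftohypothesis{\ref{h:continuoustime}}. Since $\Omega$, $s$, $t$, and $\varphi$ are assumed exactly as in Theorem \ref{thm:attsetC11}, it then remains only to supply the two quantitative ingredients of Theorem \ref{thm:attsetC2}: the eigenvalue bounds \ref{eq:thm:attsetC11:3} and the integral bound \ref{e:thm:attsetC2:1}.

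For the eigenvalue bounds I would compute
\[ F'(x) = \begin{pmatrix} 0 & 1 \\ a(x_1) & -2\gamma \end{pmatrix}, \qquad a(x_1) = \omega^2\bigl(-\cos x_1 + u\sin x_1\bigr), \]
whose symmetric part has the two eigenvalues $-\gamma \pm \sqrt{\gamma^2 + (1+a(x_1))^2/4}$. Since $|a(x_1)| \le \omega^2\sqrt{1+u^2} \le \widehat\omega^2$ for every $x_1$, we have $|1+a(x_1)| \le 1 + \widehat\omega^2$, and therefore $\lambda_- \le \mu_-(F'(x)) \le \mu_+(F'(x)) \le \lambda_+$ with the stated $\lambda_\pm = -\gamma \pm \sqrt{\gamma^2 + (1+\widehat\omega^2)^2/4}$, uniformly in $x \in \R^2$ and in particular on $\varphi([0,t],\Omega)$. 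This establishes \ref{eq:thm:attsetC11:3}.

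The main obstacle is verifying \ref{e:thm:attsetC2:1} with the explicit $L_1(t)$. Here I would note that $F''(x)(w)^2 = (0, (\partial_{x_1}^2 F_2)(x)\,w_1^2)$, since every second partial of $F$ other than $\partial_{x_1}^2 F_2$ vanishes, and $|\partial_{x_1}^2 F_2| = |\omega^2\sin x_1 + u\omega^2\cos x_1| \le \widehat\omega^2$. Writing $w(\tau) = D_2\varphi(\tau,x)h$ for the solution of the variational equation of \ref{e:Pendulum} along $\varphi(\cdot,x)$ with $w(0)=h$, the integrand of \ref{e:thm:attsetC2:1} is bounded in norm by $\widehat\omega^2\,|w_1(\tau)|^2\,\|D_2\varphi(\tau,x)^{-1}\|$, so the task reduces to bounding $|w_1(\tau)|$ and $\|D_2\varphi(\tau,x)^{-1}\|$ along trajectories. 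Because the time-varying stiffness $a(\cdot)$ is confined to $[-\widehat\omega^2,\widehat\omega^2]$, I would dominate both quantities by the corresponding solutions of the constant-coefficient comparison system, which are built from $\cosh(\widehat\omega\tau)$, $\sinh(\widehat\omega\tau)$ and from the hyperbolic functions attached to the eigenvalues $-\gamma\pm\widehat\omega$; integrating the resulting product over $[0,\delta]\subseteq[0,t]$ produces the $\sinh(3\widehat\omega t)$ and $\sinh(\widehat\omega t)$ terms, while the factors $(\widehat\omega^{-2}+1)^{-3/2}$ and $(1+(\widehat\omega+\gamma)^2)^{-3/2}$ arise from normalizing these comparison solutions by their initial data. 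The two standing assumptions $\gamma \le 3\widehat\omega/4$ and $2(\widehat\omega^2-\gamma^2)^{1/2}t \le \pi$ are precisely what keeps the comparison estimates valid and monotone on all of $[0,t]$, the second controlling the oscillation frequency $\sqrt{\widehat\omega^2-\gamma^2}$ so that the relevant solutions do not change sign before time $t$, thereby yielding the closed form $L_1(t)$. I expect this step, and in particular matching the estimate exactly to the stated $L_1(t)$, to be the most delicate part of the argument.

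With \ref{eq:thm:attsetC11:3} and \ref{e:thm:attsetC2:1} in hand and $sL_1(t)<1$ assumed, Theorem \ref{thm:attsetC2} applies with $L_1 = L_1(t)$ and yields that $\varphi(\tau,\Omega,u)$ is $r$-convex for every $\tau\in[0,t]$ with $r = s\exp((2\lambda_+-\lambda_-)t)/(1-sL_1(t))$, which is exactly $r(t)$ in \ref{e:radius}, completing the proof.
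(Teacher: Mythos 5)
Your proposal follows exactly the route the paper takes: reduce to Theorem \ref{thm:attsetC2} by verifying the eigenvalue bounds \ref{eq:thm:attsetC11:3} and the integral bound \ref{e:thm:attsetC2:1}, then read off the radius \ref{e:thm:attsetC2:2}, which is \ref{e:radius}. Your verification of \ref{eq:thm:attsetC11:3} is complete and correct: the symmetric part of $F'(x)$ has eigenvalues $-\gamma \pm \sqrt{\gamma^2+(1+a(x_1))^2/4}$ with $a(x_1) = \omega^2(-\cos x_1 + u \sin x_1)$, $|a(x_1)| \leq \omega^2\sqrt{1+u^2} \leq \widehat\omega^2$, which is precisely what the paper asserts (with even less detail, deferring to \cite[\citeSection~5]{i07Convex}). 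The point of divergence is the bound \ref{e:thm:attsetC2:1}: the paper does not prove that $L_1(t)$ satisfies it, but cites \cite[\citeTheorem~A.4]{i11abs}, which states exactly this under the two standing assumptions $\gamma \leq 3\widehat\omega/4$ and $2(\widehat\omega^2-\gamma^2)^{1/2}t \leq \pi$. Your comparison-system sketch is a plausible outline of how such a bound is derived, but as written it is not a proof: you never exhibit the dominating constant-coefficient solutions, never justify that the domination persists when the time-varying stiffness $a(\varphi(\tau,x)_1)$ changes sign along the trajectory, and never carry out the integration that would have to reproduce the exact closed form of $L_1(t)$ --- and you acknowledge as much. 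So the one genuine gap in your argument sits precisely at the step the paper delegates to prior work; everything you do prove matches the paper's proof, and modulo that cited appendix result your proof is the paper's proof.
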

\begin{proof}
With $u$ being constant, \ref{e:Pendulum} is obviously of the form \ref{e:ode}. Having said this, we have 
\begin{equation*}
F'(x) = \begin{pmatrix} 0 & 1 \\ -\omega^2 \cos(x_1) + u\ \omega^2 \sin(x_1)  & -2\gamma \end{pmatrix},
\end{equation*}
and therefore we easily conclude that $\lambda_+$ and $\lambda_-$ are the maximum and minimum, respectively, eigenvalues of $(F'(x) + F'(x)^\ast)/2$. Thus, $\lambda_+$ and $\lambda_-$ satisfy \ref{eq:thm:attsetC11:3}. See also \cite[\citeSection~5]{i07Convex}. $L_1\defas L_1(t)$ satisfies
\ref{e:thm:attsetC2:1} \cite[\citeTheorem~A.4]{i11abs}, hence by Theorem
\ref{thm:attsetC2} in the present paper the proof is finished.
\end{proof}
\begin{example}
\label{ex:radius}
Let us consider system \ref{e:Pendulum} for $u \in \{-1,0\}$ with $\omega= 1$, $\gamma = 0.01$ and denote the general solution by $\varphi$. Suppose that $\Omega_1 \subseteq \R^2$ is a $0.4$-convex set of initial values. Let us determine the strong convexity of the attainable set $\varphi(T,\Omega_1,0)$ for sampling time $T = 0.32$. The numerical values imply the bounds 
\begin{equation*}
L_1(0.32) = \frac{\sinh(0.96) + \sinh(0.32) (6/\sqrt{2} - 3)}{12(1+1.01^2)^{-3/2}} \leq 0.37, \quad \lambda_+ \leq 1, \quad \lambda_-\geq -1.02,
\end{equation*}
on the constants required in Theorem \ref{thm:Pendulum:Radius}.
By \ref{e:radius} we conclude that $\varphi(T,\Omega_1,0)$ is $r$-convex for any $r\geq 1.24$. Thus, $\varphi(T,\Omega_1,0)$ can be over-approximated by supporting balls, practically performed as discussed in Section \ref{s:introduction}, with the radii of the balls being $1.24$. Similarly, the attainable set $\varphi(T,\widehat\Omega_{1},-1)$ of an $1.24$-convex set $\widehat\Omega_{1}$ can be over-approximated by supporting balls of radius $12$. A particular situation in the computation of a discrete abstraction for \ref{e:Pendulum} is illustrated in \ref{fig:attset}: $\Omega_1$ is a $0.4$-convex embedding of the cell $\Delta_1$. Therefore, $\varphi(T,\Omega_1,0)$ is over-approximated by four supporting balls leading to the approximation $\widehat\Omega_1$, which is a $1.24$-convex set by the calculation above. For $\Omega_{1,2}\defas \Omega_2 \cap \widehat\Omega_{1}$ the attainable set $\varphi(T,\Omega_{1,2},-1)$ is over-approximated using supporting balls of both radii $1.24$ and $12$ since $\varphi(T,\Omega_{1,2},-1)$ = $\varphi(T,\Omega_2,-1) \cap \varphi(T,\widehat\Omega_{1},-1)$, which results in the approximation $\widehat \Omega_{1,2}$. \\The advantage of using supporting balls lies in the fact that with the same data as required for the half-spaces the attainable sets are approximated more accurately. Consequently, fewer non-empty intersections due to conservative approximations occur, which increases the accuracy of the discrete abstraction (less spurious transitions): In \ref{fig:attset}, $\widehat \Omega_1 \cap \Delta_3 = \emptyset$ whereas if $\widehat \Omega_1$ had been defined by the supporting half-spaces the corresponding intersection would be non-empty. In \cite{i13qsuppc}, it is demonstrated that the abstraction based control design as briefly described in the present work benefits from this refined over-approximation method.
\end{example}
\definecolor{grau}{gray}{.5} 
\begin{figure}
\begin{center}
  \psfrag{1}[][]{$\Delta_1$}
\psfrag{2}[][]{$\Omega_1$}
\psfrag{3}[l][]{\footnotesize$\varphi(T,\Omega_1,0)$}
\psfrag{4}[][]{$\Omega_{1,2}$}
\psfrag{5}[][]{$\Omega_2$}
\psfrag{6}[l][]{\footnotesize$\varphi(T,\Omega_{1,2},-1)$}
\psfrag{7}[][]{\colorbox{white}{$\widehat \Omega_1$}}
\psfrag{8}[][]{$\widehat \Omega_{1,2}$}
\psfrag{9}[][]{\textcolor{grau}{\small$\Delta_3$}}
  \includegraphics[width=.58\linewidth]{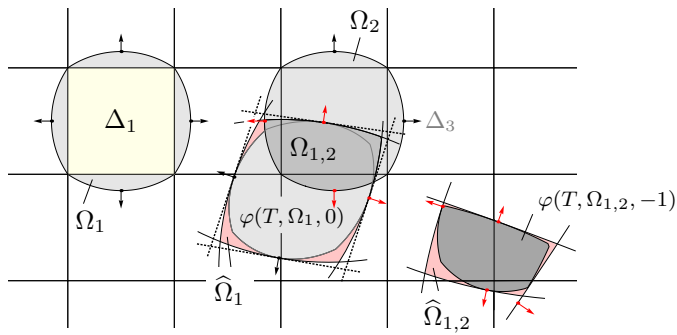}
\end{center}
\caption{\label{fig:attset} Illustration of Example \ref{ex:radius}.}
\vspace*{-\baselineskip}%
\end{figure}
\section{Conclusions}
\label{s:Conclusion}
We have developed necessary and sufficient conditions for connected 
sublevel sets of the form \ref{e:sublevelset} to be convex and strongly convex, 
respectively. The application to attainable sets of systems \ref{e:ode}
presented in Section \ref{s:application} has been the main motivation for our work.

Although it has been sufficient for our purposes to consider a finite 
dimensional setting, we emphasize that all of our results extend to 
arbitrary Hilbert spaces. In fact, the derivations in Section \ref{s:mainresultssublevelsets} do not 
need any modification if we only notice that the result in \cite{Rockafellar70} equally 
holds for infinite-dimensional cones that are given as the intersection of 
a finite number of half-spaces. Analogously, our results and arguments in 
Section \ref{s:application} remain valid under the additional assumption that the image 
$\Phi(\Omega)$ and the attainable set $\varphi(\tau,\Omega)$, 
respecticely, is closed, and for the attainable sets this is already 
guaranteed if the open set $X$ contains an $\varepsilon$-neighborhood of 
$\Omega$, for some positive $\varepsilon$. Only the proof of \cite[\citeLemma~A.2]{i11abs} needs to (and can) be adapted to the 
Hilbert space setting.

Among the issues that we leave open for future research are the relaxation 
of the constraint qualifications in Section \ref{s:mainresultssublevelsets} and the extension of our 
results on attainable sets in Section \ref{s:application} to differential inclusions under 
assumptions that are realistic from the control theory point of view.
\bibliographystyle{mysiam}
\bibliography{preambles,mrabbrev,strings,fremde,fremde2,eigeneJOURNALS,eigeneCONF}
\end{document}